\newcommand{\gvec}[1]{\boldsymbol{#1}}
\newcommand{\vxi}{\gvec{\xi}}
\newcommand\E{\mathbb{E}}
\def\ps@pprintTitle{%
  \let\@oddhead\@empty
  \let\@evenhead\@empty
  \let\@oddfoot\@empty
  \let\@evenfoot\@oddfoot
}
\begin{document}

\begin{frontmatter}
 % \title{A new data-driven method for multiscale elliptic PDEs with high-dimensional random coefficients}
  \title{A data-driven approach for multiscale elliptic PDEs with random coefficients based on intrinsic dimension reduction}
  
%  \author[cms]{Maolin Ci}
%  \ead{cimaolin@gmail.com}
%  \author[ox]{Michael B. Giles}
%  \ead{Mike.Giles@maths.ox.ac.uk}
%  \author[cms]{Thomas Y. Hou}
%  \ead{hou@cms.caltech.edu}
   \author[hku]{Sijing Li}
   \ead{lsj17@hku.hk}  
  \author[hku]{Zhiwen Zhang\corref{cor1}}
  \ead{zhangzw@hku.hk}
  \author[uci]{Hongkai Zhao}
  \ead{zhao@math.uci.edu}  
 
%\address[cms]{Computing and Mathematical Sciences, California Institute of Technology, Pasadena, CA 91125, USA.}
  \address[hku]{Department of Mathematics, The University of Hong Kong, Pokfulam Road, Hong Kong SAR, China.}
  \address[uci]{Department of Mathematics, University of California at Irvine, Irvine, CA 92697, USA.}
  
  \cortext[cor1]{Corresponding author}

\begin{abstract}
%This article deals with the existence of blockwise low-rank approximants - so-called $\mathcal{H}$-matrices - to inverses of FEM matrices in the case of uniformly elliptic operators with $L^{\infty}$-coefficients. Unlike operators arising from boundary element methods for which the $\mathcal{H}$-matrix theory has been extensively developed, the inverses of these operators do not benefit from the smoothness of the kernel function. However, it will be shown that the corresponding Green functions can be approximated by degenerate functions giving rise to the existence of blockwise low-rank approximants of FEM inverses. Numerical examples confirm the correctness of our estimates. As a side-product we analyse the $\mathcal{H}$-matrix property of the inverse of the FE mass matrix.
\noindent	
We propose a data-driven approach to solve multiscale elliptic PDEs with random coefficients based on the intrinsic low dimension structure of the underlying elliptic differential operators. Our method consists of offline and online stages. At the offline stage, a low dimension space and its basis are extracted from the data to achieve significant dimension reduction in the solution space. At the online stage, the extracted basis will be used to solve a new multiscale elliptic PDE efficiently. The existence of low dimension structure is established by showing the high separability of the underlying Green's functions. Different online construction methods are proposed depending on the problem setup.
%by: 1)  approximation of the nonlinear map from the parametrized coefficient domain to the extracted low dimension space,  2) Galerkin method based on the extracted basis, 3)  least square fitting of measurements at well chosen sensing locations. 
%construct a small number of data-driven basis functions either on subdomains or on the whole computational domain, which can be used to approximate the multiscale random solutions. Moreover, we investigate the relation between the random input and the output solutions and build a mapping to approximate this relation. In the online stage, with the help of the data-driven basis and the mapping, we can quickly compute the multiscale solutions for each new coefficient. 
We provide error analysis based on the sampling error and the truncation threshold in building the data-driven basis. Finally, we present numerical examples to demonstrate the accuracy and efficiency of the proposed method. 
\\
\noindent \textit{\textbf{AMS subject classification:}}  35J08, 35J15, 65N30, 65N80, 78M34. 
%  35J08  	Green's functions
%  35J15  	Second-order elliptic equations
%  65N30  	Finite elements, Rayleigh-Ritz and Galerkin methods, finite methods 
%  65N80  	Fundamental solutions, Green's function methods, etc.
%  78M34  	Model reduction
\end{abstract}
\begin{keyword}
Partial differential equations (PDEs) with random coefficients; Green's function; separability; principle component analysis; proper orthogonal decomposition (POD); uncertainty quantification (UQ); neural network. 
\end{keyword}
\end{frontmatter}
% \newpage
% \tableofcontents
% \listoffigures
% \listoftables
% \newpage

\section{Introduction} \label{sec:introduction}
\noindent
In this paper, we shall develop a data-driven method to solve the following multiscale elliptic PDEs with random coefficients $a(x,\omega)$, 
\begin{align}
\mathcal{L}(x,\omega) u(x,\omega) \equiv -\nabla\cdot\big(a(x,\omega)\nabla u(x,\omega)\big) &= f(x), 
\quad x\in D, \quad \omega\in\Omega,   \label{MsStoEllip_Eq}\\
u(x,\omega)&= 0, \quad \quad x\in \partial D, \label{MsStoEllip_BC}
\end{align}
where $D \in \mathbb{R}^d$ is a bounded spatial domain and $\Omega$ is a sample space. The forcing function $f(x)$ is assumed to be in $L^2(D)$.
%(not just $H^{-1}(D)$) because this is necessary for the compactness of the solution space. 
We also assume that the problem is uniformly elliptic almost surely; see Section \ref{sec:randomproblem} for precise definition of the problem.
%namely, there exist $a_{\min}, a_{\max}>0$, such that
%\begin{align}
%P\big(\omega\in \Omega: a(x, \omega)\in [a_{\min},a_{\max}], \forall x \in D\big) = 1.
%\label{asUniformlyElliptic}
%\end{align}
%Note that we do not make any assumption on the regularity of the coefficient $a(x,\omega)$ in the physical space, which can be rough for each realization. Furthermore, we assume $a(x,\omega)$ is parameterized by $r$ independent random variables,  i.e., $a(x,\omega) =  a(x,\xi_{1}(\omega),...,\xi_{r}(\omega))$. Thus, according to the Doob-Dynkin lemma the solution can be represented as a functional of these random variables, i.e., $u(x,\omega) = u(x,\xi_{1}(\omega),...,\xi_{r}(\omega))$.

In recent years, there has been an increased interest in quantifying the uncertainty in systems with randomness, i.e., solving stochastic partial differential equations (SPDEs, i.e., PDEs driven by Brownian motion) or partial differential equations with random coefficients (RPDEs). Uncertainty quantification (UQ) is an emerging research 
area to address these issues; see \cite{Ghanem:91,Xiu:03,babuska:04,matthies:05,WuanHou:06,Wan:06,Babuska:07,Webster:08,Xiu:09,Najm:09,sapsis:09,Zabaras:13,Grahamquasi:2015} and references therein. 
%\textcolor{red}{Many effective methods are available when the random space can be parametrized by few parameters.} 
However, when SPDEs or RPDEs involving multiscale features and/or high-dimensional random inputs, the problems become challenging due to high computational cost.

Recently, some progress has been made in developing numerical methods for multiscale PDEs with random coefficients; see \cite{Kevrekidis:2003,Zabaras:06,Ghanem:08,graham2011quasi,abdulle2013multilevel,hou2015heterogeneous,ZhangCiHouMMS:15,efendiev2015multilevel,chung2018cluster} and references therein.  
%The second author and his collaborators also make some progress in developing numerical methods for multiscale PDEs with random coefficients by exploring the low-dimensional structure of the solutions and constructing problem-dependent basis functions to solve these RPDEs. 
For example,  data-driven stochastic methods to solve PDEs with random and/or multiscale coefficients were proposed in \cite{ChengHouYanZhang:13,ZhangCiHouMMS:15,ZhangHouLiu:15,hou2019model}. They demonstrated through numerical experiments that those methods were efficient in solving RPDEs with many different force functions. However, the polynomial chaos expansion \cite{Ghanem:91,Xiu:03} is used to represent the randomness in the solutions. Although the polynomial chaos expansion is general, it is a priori instead of problem specific. Hence many terms may be required in practice for an accurate approximation which induces the curse of dimensionality. 
%Moreover, it does not fit to solve problems whose randomness is already parametrized by high-dimensional random variables. 
%This motivates us to develop new methodologies to address this challenging problem. 

We aim to develop a new data-driven method to solve multiscale elliptic PDEs with random coefficients based on intrinsic dimension reduction. The underlying low-dimensional structure for elliptic problems is implied by the work \cite{bebendorf2003}, in which high separability of the Green's 
function for uniformly elliptic operators with $L^{\infty}$ coefficients and the structure of blockwise low-rank approximation to the inverses of FEM matrices were established. We show that under the uniform ellipticity assumption, the family of Green's functions parametrized by a random variable $\omega$ is still highly separable, which reveals the approximate low dimensional structure of the family of solutions to \eqref{MsStoEllip_Eq} (again parametrized by $\omega$)
and motivates our method. 
%Specifically, we find the that intrinsic dimension of the solution space is controlled by the 
%contrast of the coefficient $a(x,\omega)$, denoted by $\kappa_a=\frac{a_{\max}}{a_{\min}}$ (see Eq.\eqref{asUniformlyElliptic1}), rather than the dimension of the random variables in the coefficient $a(x,\omega)$, i.e., $r$.  Motivated by this new point of view, we design a new data-driven method that allows us to solve multiscale elliptic PDEs with high-dimensional random coefficients.  
 
Our method consists of two stages. In the offline stage, a set of data-driven basis 
is constructed from solution samples. For example, the data can be generated by solving \eqref{MsStoEllip_Eq}-\eqref{MsStoEllip_BC} corresponding to a sampling of the coefficient $a(x,\omega)$. 
Here, different sampling methods can be applied, including Monte Carlo (MC) method and quasi-Monte Carlo (qMC) method. The sparse-grid based stochastic collocation method  \cite{Griebel:04,Xiu:05,Webster:08} also works when the dimension of the random variables in $a(x,\omega)$ is moderate. Or the data come from field measurements directly.
%However, it suffers from the curse of dimensionality when the dimension of the random variables in $a(x,\omega)$ is high. 
Then the low-dimensional structure and the corresponding basis will be extracted using model reduction methods, such as the proper orthogonal decomposition (POD) \cite{HolmesLumleyPOD:1993,Sirovich:1987,Willcox2015PODsurvey}, a.k.a. principle component analysis (PCA). The basis functions are data driven and problem specific. 
The key point is that once the dimension reduction is achieved, the online stage of computing the solution corresponding to a new coefficient becomes finding a linear combination of the (few) basis to approximate the solution. However, the mapping from the input coefficients of the PDE to the expansion coefficients of the solution in terms of the data driven basis is highly nonlinear. We propose a few possible online strategies (see Section \ref{sec:DerivationNewMethod}). For examples, if the coefficient is in parametric form, one can approximate the nonlinear map from the parameter domain to the expansion coefficients. Or one can apply Galerkin method using the extracted basis to solve \eqref{MsStoEllip_BC} for a new coefficient. In practice, the random coefficient of the PDE may not be available but censors can be deployed to record the solution at certain locations. In this case, one can compute the expansion coefficients of a new solution by least square fitting those measurements at designed locations.
We also provide analysis and guidelines for sampling, dimension reduction, and other implementations of our methods.

%Finally, we conduct some post processing based on the basis functions. For the local problem, we build a mapping to directly approximate the relationship between the random input and the output solution. For the global problem, we pre-compute some quantities that do not depend on random components in the coefficient, which provides considerable savings in assembling the stiffness matrix in the online stage. 

%In the online stage, using our data-driven basis functions, we can efficiently solve the multiscale elliptic PDEs with high-dimensional random coefficients. For the local problem, we can obtain the solution directly using the mapping, where the computational time is negligible.  For the global problem, we derive a weak formulation for Eqns.\eqref{MsStoEllip_Eq}-\eqref{MsStoEllip_BC} using the Galerkin method and solve a small-scale linear equation system to compute the solution. In addition, based on some mild assumptions, we study how to determine the number of learning samples in constructing the data-driven basis, which provides guideline in the implementation of the proposed method. Finally, we present numerical results to demonstrate the accuracy and efficiency of the proposed method for several problems with multiscale and high-dimensional random coefficients.
 
The rest of the paper is organized as follows. In Section 2, we introduce the high separability of the Green's function of deterministic elliptic PDEs and present its extension to  elliptic problems with random coefficients. In section 3, we describe our new data-driven method and its detailed implementation. In Section 4, we present numerical results to demonstrate the efficiency of our method.  Concluding remarks are made in Section 5.

%Some convergence and computational complexity analysis will be discussed in section 4. In Section 5, we present numerical results to demonstrate the efficiency of our method. Concluding remarks are made in Section 6.

\section{Low-dimensional structures in the solution space} \label{sec:LowDimStructures}
\subsection{High separability of the Green's function of deterministic elliptic operators. }
\noindent
%We first introduce the intrinsic low-dimensional structures of the deterministic elliptic problems. 
Let $\mathcal{L}(x): V \to V' $ be an uniformly elliptic operator in a divergence form
\begin{align}
\mathcal{L}(x)u(x) \equiv  -\nabla\cdot(a(x)\nabla u(x))\label{DeterministicEllipticPDE}
\end{align}
in a bounded Lipschitz domain $D \subset \mathbb{R}^d$, where $V = H_0^1(D)$. The uniformly elliptic assumption means that there exist $a_{\min}, a_{\max}>0$, such that $a_{\min}<a(x)<a_{\max}$ for almost all $x \in D$. The contrast ratio $\kappa_a=\frac{a_{\max}}{a_{\min}}$ is an important factor in the stability and convergence analysis. We consider the Dirichlet boundary value problem defined as 
\begin{align}
\mathcal{L}(x)u(x)=f(x), \quad \text{in}~ D,  \quad u(x)=0, \quad \text{on} ~ \partial D.
\label{DeterministicDirichletProblem}
\end{align}
For all $x,y\in D$, the Green's function $G(x,y)$ is the solution of 
\begin{align}
\mathcal{L}G(\cdot,y)=\delta(\cdot,y), \quad \text{in}~ D,  \quad  G(\cdot,y)=0, \quad \text{on} ~ \partial D,
\label{DeterministicGreenFunction}
\end{align}
where $\mathcal{L}$ refers to the first variable $\cdot$ and $\delta(\cdot,y)$ is the Dirac delta function denoting an impulse source point at $y\in D$. The Green's function $G(x,y)$ is the Schwartz kernel of the inverse $\mathcal{L}^{-1}$, i.e., the solution of \eqref{DeterministicDirichletProblem} is represented by 
\begin{align}
u(x)=\mathcal{L}^{-1}f(x)=\int_{D}G(x,y)f(y)dy. \label{solutionrepresentationGreen}
\end{align}
Since the coefficient $a(x)$ is only bounded, the $G(x,y)$ has a lower regularity, compared with the 
Green's function associated with the Poisson's equation. 
In \cite{gruter1982green}, the authors proved the existence of Green's function for $ d \geq 3 $ and the estimate $|G(x,y)|\leq \frac{C(d,\kappa_a)}{a_{\min}}|x-y|^{2-d}$, where $C(d,\kappa_a)$ is a constant depends on $d$ and $\kappa_a$. For $d=2$ the existence of the Green's function was proved in \cite{dolzmann1995estimates} together with the estimate $|G(x,y)|\leq \frac{C(\kappa_a)}{a_{\min}}\log|x-y|$.
Thus, when $\mathcal{L}$ is an uniform elliptic operator, 
$\mathcal{L}^{-1}$ exists and $||\mathcal{L}^{-1}||\leq Ca_{\min}^{-1}$, where $C$ depends on $d$ and $\kappa_a$.  

Under mild assumptions, one can prove that the solution $u(x)$ to Eq.\eqref{DeterministicDirichletProblem} has finite dimensional approximations as follows.
\begin{proposition}\label{FiniteDimensionalApprox}
	Let $D \subset \mathbb{R}^d$ be a convex domain and $X$ be a closed subspace of $L^2(D)$. Then for any 
	integer $k \in \mathbb{N}$ there is a subspace $V_k \subset X$ satisfying $\dim V_k \leq k$ such that 
	\begin{align}
	\text{\normalfont dist}_{L^2(D)}(u,V_k) \le C \frac{\text{\normalfont diam}(D)}{\sqrt[d]{k}}\| \nabla u \| _{L^2(D)}, \quad \text{\normalfont for all } u \in X \cap H^1(D), 
	\label{ApproxByFiniteDimensionSubspace}
	\end{align}
	where the constant $C$ depends only on the spatial dimension $d$.
\end{proposition}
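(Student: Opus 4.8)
The plan is to combine a local Poincar\'e estimate on a partition of $D$ into roughly $k$ cells with a projection step that forces the approximating space to lie inside $X$. Two facts drive the argument: on a convex set the Poincar\'e--Wirtinger inequality holds with a constant proportional to the diameter and otherwise independent of the shape, and the $L^2(D)$-orthogonal projection onto $X$ is a norm-one operator that fixes every element of $X$.

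I would start by building the partition. Since each coordinate function varies over $D$ by at most $\mathrm{diam}(D)$, the domain sits inside an axis-parallel cube of side $\mathrm{diam}(D)$; cutting that cube by a uniform grid of mesh size $h=\mathrm{diam}(D)/m$ with $m=\lfloor k^{1/d}\rfloor$ produces at most $m^d\le k$ nonempty cells $D_i:=D\cap Q_i$. Each $D_i$ is convex, being the intersection of the convex set $D$ with a cube; it satisfies $\mathrm{diam}(D_i)\le\sqrt d\,h\le 2\sqrt d\,\mathrm{diam}(D)\,k^{-1/d}$ (using $\lfloor x\rfloor\ge x/2$ for $x\ge1$); and the $D_i$ tile $D$ up to a null set. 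Letting $\bar u_i$ denote the mean of $u$ over $D_i$, the Poincar\'e--Wirtinger inequality on the convex set $D_i$ gives $\|u-\bar u_i\|_{L^2(D_i)}\le\pi^{-1}\mathrm{diam}(D_i)\,\|\nabla u\|_{L^2(D_i)}$; squaring, summing over $i$, and using $\sum_i\|\nabla u\|_{L^2(D_i)}^2=\|\nabla u\|_{L^2(D)}^2$ yields, for every $u\in H^1(D)$,
\[
\|u-Q_ku\|_{L^2(D)}\;\le\;\frac{2\sqrt d}{\pi}\,\frac{\mathrm{diam}(D)}{k^{1/d}}\,\|\nabla u\|_{L^2(D)},
\qquad Q_ku:=\sum_i\bar u_i\,\chi_{D_i},
\]
where, because the $\chi_{D_i}$ have pairwise disjoint supports, $Q_k$ is exactly the $L^2(D)$-orthogonal projection onto $W_k:=\mathrm{span}\{\chi_{D_1},\dots,\chi_{D_N}\}$ with $N\le k$.

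To pass from $W_k$ to a subspace of $X$, I would let $\Pi_X$ be the orthogonal projection of $L^2(D)$ onto the closed subspace $X$ and set $V_k:=\Pi_X W_k\subset X$, so that $\dim V_k\le\dim W_k=N\le k$. For $u\in X\cap H^1(D)$ one has $\Pi_X u=u$, hence, since $\|\Pi_X\|\le1$,
\[
\mathrm{dist}_{L^2(D)}(u,V_k)\;\le\;\|u-\Pi_X(Q_ku)\|_{L^2(D)}\;=\;\|\Pi_X(u-Q_ku)\|_{L^2(D)}\;\le\;\|u-Q_ku\|_{L^2(D)},
\]
and the previous display gives the claimed bound with $C=2\sqrt d/\pi$; any dimension-only estimate for the convex-domain Poincar\'e constant would serve equally well. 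The degenerate cases ($k=1$, or grid cells of measure zero) are harmless and are simply discarded.

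The one genuinely delicate point is the uniform control of the Poincar\'e constants of the cells: the sets $D\cap Q_i$ may be geometrically irregular, and for a general domain the Poincar\'e constant cannot be bounded in terms of the diameter alone. It is exactly the convexity hypothesis on $D$ — inherited by every intersection $D\cap Q_i$ — that saves the argument, via the shape-independent bound $\mathrm{diam}(\Omega)/\pi$ valid on any convex $\Omega$. The remaining ingredients (counting the cells, the orthogonality of the cell averages, and the contraction property of $\Pi_X$) are routine.
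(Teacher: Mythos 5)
Your proof is correct and follows essentially the same route as the paper's: subdivide a bounding cube of side $\mathrm{diam}(D)$ into roughly $k$ subcubes, apply the convex Poincar\'e--Wirtinger inequality on each cell $D\cap Q_i$, sum, and then push the piecewise-constant space into $X$ via the norm-one orthogonal projection, which fixes $u\in X$. The only differences are cosmetic bookkeeping (you treat general $k$ directly with $\lfloor k^{1/d}\rfloor\ge k^{1/d}/2$ rather than first handling $k=l^d$, and your explicit constant $2\sqrt{d}/\pi$ differs harmlessly from the paper's), so no further changes are needed.
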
 
The proof is based on the Poincar\'{e} inequality; see \cite{BebendorfHackbusch:2003}.  All distances and diameters use the Euclidean norm in $\mathbb{R}^d$ except the distance of functions which uses the $L^2(D)$-norm. We emphasize that in the Prop. \ref{FiniteDimensionalApprox} one can choose the finite dimensional space $V_k$ to be the space of piece-wise constant functions defined on a grid with grid size $\frac{\text{\normalfont diam}(D)}{\sqrt[d]{k}}$.   

Now we present the definition of $\mathcal{L}$-harmonic function on a domain $E\subset D$ introduced in  \cite{BebendorfHackbusch:2003}. A function $u$ is $\mathcal{L}$-harmonic on $E$ if $u\in H^1(\hat{E}), \forall \hat{E} \subset E$ with $dist(\hat{D}, \partial E)>0$ and satisfies
\[
a(u, \varphi) =  \int_{E} a(x)\nabla u(x)\cdot \nabla \varphi(x) dx =0 \quad \forall \varphi \in C_0^{\infty} (E).
\]
Denote the space of $\mathcal{L}$-harmonic functions on $E$ by $X(E)$, which is closed in $L^2(E)$. The following key Lemma shows that the space of $\mathcal{L}$-harmonic function has an approximate low dimensional structure. 

\begin{lemma}[Lemma 2.6 of \cite{BebendorfHackbusch:2003}]\label{lemma1}
Let $\hat{E}\subset E \subset D$ in $R^d$ and assume that $\hat{E}$ is convex such that 
\[
dist(\hat{E}, \partial E)\ge \rho~ diam(\hat{E})>0, \quad \mbox{for some constant } \rho >0.
\]
Then for any $1>\epsilon>0$, there is a subspace $W\subset X(\hat{E})$ so that for all $u\in X(\hat{E})$,
\[
dist_{L^2(\hat{E})}(u, W)\le \epsilon \|u\|_{L^2(E)}
\]
and
\[
dim(W)\le c^d(\kappa_a,\rho) (|\log \epsilon |)^{d+1},
\]
where $c(\kappa_a,\rho) >0 $ is a constant that depends on $\rho$ and $\kappa_a$.
\end{lemma}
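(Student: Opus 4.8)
The plan is to establish the lemma by iterating Proposition~\ref{FiniteDimensionalApprox} along a finite chain of nested convex shells, with the approximation error contracting by a fixed factor at each shell. The contraction is supplied by an interior gradient (Caccioppoli) estimate for $\mathcal{L}$-harmonic functions: if $u$ is $\mathcal{L}$-harmonic on an open set $O$ and $O'\subset O$ with $\mathrm{dist}(O',\partial O)\ge d_0>0$, then
\[
\|\nabla u\|_{L^2(O')}\le \frac{C(\kappa_a)}{d_0}\,\|u\|_{L^2(O)},
\]
where $C(\kappa_a)$ depends only on the contrast ratio. I would prove this by testing the weak equation with $\varphi=\chi^2 u$ for a smooth cutoff $\chi\equiv1$ on $O'$, $\mathrm{supp}\,\chi\subset O$, $|\nabla\chi|\le 2/d_0$, and using $a_{\min}\le a\le a_{\max}$ together with Young's inequality to absorb the cross term.

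Combining this with Proposition~\ref{FiniteDimensionalApprox} applied to the convex domain $O'$ and the closed subspace $X=X(O')$ of $L^2(O')$ gives the one-step result: for every $q\in(0,1)$ there is a space $V\subset X(O')$, itself consisting of $\mathcal{L}$-harmonic functions, with
\[
\dim V\le \Big(\frac{C(d,\kappa_a)\,\mathrm{diam}(O')}{q\,d_0}\Big)^{d}
\quad\text{and}\quad
\mathrm{dist}_{L^2(O')}(u,V)\le q\,\|u\|_{L^2(O)}\ \text{ for every }u\ \mathcal{L}\text{-harmonic on }O.
\]
The decisive feature is that $V\subset X(O')$, so subtracting an element of $V$ from an $\mathcal{L}$-harmonic function leaves an $\mathcal{L}$-harmonic remainder, which is what makes the estimate iterable. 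Now set $\delta=\rho\,\mathrm{diam}(\hat E)$, so that the $\delta$-neighborhood of $\hat E$ lies in $E$, fix $p\in\mathbb{N}$, and let $E_j=\{x:\mathrm{dist}(x,\hat E)<j\delta/p\}$ for $j=0,\dots,p$. Then $\hat E=E_0\subset E_1\subset\cdots\subset E_p\subseteq E$, every $E_j$ is convex (the sum of the convex set $\hat E$ and a ball), $\mathrm{dist}(E_{j-1},\partial E_j)\ge\delta/p$, and $\mathrm{diam}(E_j)\le(1+2\rho)\,\mathrm{diam}(\hat E)$.

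Applying the one-step result to each pair $(E_j,E_{j-1})$ with the fixed factor $q=1/e$ yields spaces $V^{(j)}\subset X(E_{j-1})$ with the common bound $\dim V^{(j)}\le k:=\big(e\,C(d,\kappa_a)(1+2\rho)\,p/\rho\big)^{d}$. Telescoping: given $u$ that is $\mathcal{L}$-harmonic on $E$ (hence on $E_p$), pick $v_p\in V^{(p)}$ with $\|u-v_p\|_{L^2(E_{p-1})}\le q\|u\|_{L^2(E_p)}$, then $v_{p-1}\in V^{(p-1)}$ with $\|u-v_p-v_{p-1}\|_{L^2(E_{p-2})}\le q\|u-v_p\|_{L^2(E_{p-1})}$, and so on down to $v_1\in V^{(1)}$; at each stage the residual $u-v_p-\cdots-v_j$ is $\mathcal{L}$-harmonic on $E_{j-1}$, so the one-step bound applies there and the factors multiply, giving
\[
\mathrm{dist}_{L^2(\hat E)}\Big(u,\ \sum_{j=1}^p V^{(j)}\Big)\ \le\ \Big\|u-\sum_{j=1}^p v_j\Big\|_{L^2(\hat E)}\ \le\ q^{p}\,\|u\|_{L^2(E)}.
\]
Taking $W=\sum_{j=1}^p V^{(j)}\subset X(\hat E)$ and $p=\lceil|\log\epsilon|\rceil$ makes $q^{p}=e^{-p}\le\epsilon$, while
\[
\dim W\le p\,k\ \le\ \Big(\frac{e\,C(d,\kappa_a)(1+2\rho)}{\rho}\Big)^{d}\,p^{\,d+1}\ \le\ c^{d}(\kappa_a,\rho)\,(|\log\epsilon|)^{d+1}
\]
after absorbing the $d$- and $\rho$-dependent constants into $c(\kappa_a,\rho)$, which is the asserted bound.

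The step I expect to be the main obstacle is securing the geometric decay across the chain. Since the coefficient $a$ lies only in $L^\infty$, an $\mathcal{L}$-harmonic function has no regularity beyond $H^1$, so one cannot simply iterate a higher-order interior estimate; the decay must instead come from the structural fact that each intermediate approximant stays inside the $\mathcal{L}$-harmonic class — precisely what lets the Caccioppoli estimate be reapplied at the next shell — together with a uniform control of the geometry (convexity of every $E_j$, comparable diameters, and a level-independent gap $\delta/p$) so that one dimension bound serves all $p$ levels. The exponent $d+1$ then comes out of balancing the fixed contraction $q$ against the number of levels: $p$ shells, each of cost of order $p^{d}$.
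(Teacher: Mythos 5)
Your proposal is correct and follows essentially the same route the paper indicates for this lemma (which it quotes from \cite{BebendorfHackbusch:2003}): combine Prop.~\ref{FiniteDimensionalApprox} with the Caccioppoli inequality for $\mathcal{L}$-harmonic functions to get a one-step contraction, then iterate over $p\sim|\log\epsilon|$ nested convex shells with per-level dimension $O(p^d)$, yielding the $|\log\epsilon|^{d+1}$ bound; your construction of the spaces $V^{(j)}$ as projections of piecewise constants onto the $\mathcal{L}$-harmonic space is exactly the multi-resolution candidate for $W$ that the paper describes. The only detail worth a remark is that the iteration needs the Caccioppoli estimate to hold for elements of the closed space $X(E_{j-1})$ (i.e., for $L^2$-limits of $\mathcal{L}$-harmonic functions), which is handled in the cited reference by a standard weak-compactness argument and does not affect the validity of your argument.
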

In other words, the above Lemma says the Kolmogorov n-width of the space of $\mathcal{L}$-harmonic function $X(\hat{E})$  is less than $O(\exp(-n^{\frac{1}{d+1}}))$.
The key property of $\mathcal{L}$-harmonic functions used to prove the above result is the Caccioppoli inequality, which provides the estimate $\|\nabla u\|_{L^2(\hat{\E})} \le C(\kappa_a, \rho)\|u\|_{L^2(E)}$. Moreover, the projection of the space of piecewise constant functions defined on a multi-resolution rectangular mesh onto  $X(\hat{E})$ can be constructed as a candidate for $W$ based on Prop. \ref{FiniteDimensionalApprox}.

In particular, the Green's function $G(\cdot,y)$ is $\mathcal{L}$-harmonic on $E$ if $y\notin E$. Moreover, given two disjoint domains in $D_1, D_2$ in $D$, the Green's function $G(x,y)$ with $x\in D_1, y\in D_2$ can be viewed as a family of $\mathcal{L}$-harmonic functions on $D_1$ parametrized by $y\in D_2$. From the above Lemma one can easily deduce the following result which shows the high separability of the Green's function for the elliptic operator \eqref{DeterministicEllipticPDE}.

% where $\mathcal{L}$-harmonicity is defined in the weak formulation 
%\begin{align}
%a(G(\cdot, y),\varphi(\cdot)) = 0 \quad \text{ for all } \varphi \in C_0^{\infty} (D_{1}).
%\label{L-harmonic}  
%\end{align}
%Here $a(\cdot , \cdot)$ is the bi-linear form  associated with \eqref{DeterministicEllipticPDE} defined by 
%\begin{align}
%a(u , v) =  \int_{D} a(x)\nabla u(x)\cdot \nabla v(x)dx.
%\label{bilinearform}  
%\end{align}

%After introducing the concept of the $\mathcal{L}$-harmonic function, one can study how large 
%the dimension of a finite dimensional subspace should be, in order to approximate the solution space in a subdomain $D_1$ of $D$, i.e., the restriction of the solution $u(x)$ on $D_1$ using the $\mathcal{L}$-harmonic function. We consider the Green's functions $G(\cdot,y)$ with $y\in D\setminus \bar{D}_{1}$, which are 
%$\mathcal{L}$-harmonic functions in $D_1$. Hackbusch et.al. proved that the Green function $G(\cdot,y)$ has a good separable approximation \cite{BebendorfHackbusch:2003}.

\begin{figure}[tbph] 
	\centering
	\begin{tikzpicture}[scale=0.9]
	% whole domain 
	\coordinate [label={[xshift=0.7cm, yshift=0.3cm]$D$}] (a1) at (0,0);
	\coordinate (b1) at (0,4);
	\coordinate (c1) at (8,4);
	\coordinate (d1) at (8,0);
	\draw(a1)--(b1)--(c1)--(d1)--cycle;
	% subdomain D1
	\coordinate (a2) at (1,0.8);
	\coordinate (b2) at (1,3.2);
	\coordinate (c2) at (3,3.2);
	\coordinate (d2) at (3,0.8);
	\draw(a2)--(b2)--(c2)--(d2)--cycle;
	% subdomain D2
	\coordinate (a3) at (5,0.8);
	\coordinate (b3) at (5,3.2);
	\coordinate (c3) at (7,3.2);
	\coordinate (d3) at (7,0.8);
	\draw(a3)--(b3)--(c3)--(d3)--cycle;
	% text
	\tikzstyle{textnode}  = [thick, fill=white, minimum size = 0.1cm]
	\node[textnode] (D1) at (2,2) {$D_1$};
	\node[textnode] (D2) at (6,2) {$D_2$};
	\node[textnode] (Gf) at (4,3.3) {$G(x,y)$};
	% arrow
	\path [->] (Gf) edge node {} (D1);
	\path [->] (Gf) edge node {} (D2);
	\end{tikzpicture}
	\caption{Green's function $G(x,y)$ with dependence on $x\in D_1$ and $y\in D_2$.}
	\label{fig:Greenfunction1}
\end{figure}
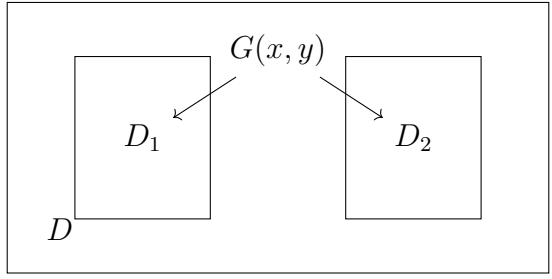 

\begin{proposition}[Theorem 2.8 of \cite{BebendorfHackbusch:2003}]\label{GreenFuncSepaApp}
	Let $D_1, D_2 \subset D$ be two subdomains and $D_1$ be convex (see Figure \ref{fig:Greenfunction1}). Assume that there exists $\rho>0$ such that 
	\begin{align}
	0 < \text{ \normalfont diam} (D_1) \leq \rho\text{ \normalfont dist} (D_1, D_2). 
	\label{AdmissiblePairs}
	%\quad \text{ \normalfont when} \quad D_1' := D_1 \cap \Omega \quad \text{\normalfont and} \quad D_2' := D_2 \cap \Omega 
	\end{align}
	Then for any $\epsilon \in (0,1)$ there is a separable approximation
	\begin{align}
	G_k(x,y) = \sum_{i=1}^k u_i(x) v_i(y) \quad \text{with } k \leq  
	c^d(\kappa_a, \rho) |\log \epsilon|^{d+1},
	\label{GreenFuncSepaApp1}
	\end{align}
	so that for all $y\in D_2$
	\begin{align}
	\| G( \cdot,y) - G_k(\cdot,y) \|_{L^2 (D_1)} \leq \epsilon \| G(\cdot,y) \| _{L^2(\hat{D}_1)},
	\end{align}
	where  $\hat{D}_1 := \{ x \in D :  2\rho~\text{\normalfont dist} (x, D_1) \leq \text{\normalfont diam} (D_1)\}$.
\end{proposition}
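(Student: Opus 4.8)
The plan is to derive the separable approximation by applying Lemma~\ref{lemma1} to the pair $\hat E = D_1$, $E = \hat D_1$, so that the admissibility condition \eqref{AdmissiblePairs} becomes exactly the geometric hypothesis of that lemma, and then to read off $G_k$ from an orthonormal basis of the resulting low-dimensional space $W$, with the separation of variables coming for free from the $L^2$-orthogonal projection.

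First I would verify the two geometric facts about the thickened domain $\hat D_1 = \{ x \in D : 2\rho\,\mathrm{dist}(x,D_1) \le \mathrm{diam}(D_1) \}$. On one hand $D_1 \subset \hat D_1$ (since $\mathrm{dist}(x,D_1)=0$ on $D_1$), and by construction $\mathrm{dist}(D_1, \partial \hat D_1) \ge \frac{1}{2\rho}\mathrm{diam}(D_1) = \rho'\,\mathrm{diam}(D_1)$ with $\rho' := 1/(2\rho) > 0$; the portion of $\partial \hat D_1$ lying on $\partial D$ is harmless since $G(\cdot,y)$ vanishes there. On the other hand $D_2 \cap \hat D_1 = \emptyset$: for $y \in D_2$, \eqref{AdmissiblePairs} gives $\mathrm{dist}(y, D_1) \ge \mathrm{dist}(D_1, D_2) \ge \mathrm{diam}(D_1)/\rho$, hence $2\rho\,\mathrm{dist}(y,D_1) \ge 2\,\mathrm{diam}(D_1) > \mathrm{diam}(D_1)$, so $y \notin \hat D_1$. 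Consequently, for each fixed $y \in D_2$ the function $G(\cdot,y)$ solves $\mathcal{L}G(\cdot,y) = 0$ on $\hat D_1$ and is bounded there by the pointwise estimates of \cite{gruter1982green,dolzmann1995estimates} recalled above; therefore $G(\cdot,y) \in X(\hat D_1)$ and, restricting the test functions to $C_0^\infty(D_1) \subset C_0^\infty(\hat D_1)$, also $G(\cdot,y)|_{D_1} \in X(D_1)$.

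Next I would invoke Lemma~\ref{lemma1} with $\hat E = D_1$ (convex by hypothesis), $E = \hat D_1$ and the constant $\rho'$ above: for the prescribed $\epsilon \in (0,1)$ it produces a subspace $W \subset X(D_1)$ with $\dim W \le c^d(\kappa_a, \rho')(|\log\epsilon|)^{d+1}$ such that $\mathrm{dist}_{L^2(D_1)}(u,W) \le \epsilon \|u\|_{L^2(\hat D_1)}$ for every $u \in X(D_1)$; absorbing $\rho' = 1/(2\rho)$ into the constant gives $k := \dim W \le c^d(\kappa_a,\rho)|\log\epsilon|^{d+1}$. Now fix an $L^2(D_1)$-orthonormal basis $u_1,\dots,u_k$ of $W$, set $v_i(y) := \langle G(\cdot,y), u_i \rangle_{L^2(D_1)}$ and $G_k(x,y) := \sum_{i=1}^{k} u_i(x)\,v_i(y)$. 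Then $G_k(\cdot,y)$ is precisely the $L^2(D_1)$-orthogonal projection of $G(\cdot,y)$ onto $W$, so applying the displayed estimate with $u = G(\cdot,y)$ yields $\|G(\cdot,y) - G_k(\cdot,y)\|_{L^2(D_1)} = \mathrm{dist}_{L^2(D_1)}(G(\cdot,y), W) \le \epsilon \|G(\cdot,y)\|_{L^2(\hat D_1)}$, which is the assertion.

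The last paragraph is routine; the only genuinely delicate point is the geometry of $\hat D_1$ — namely, confirming that truncating it at $\partial D$ does not destroy the lower bound $\mathrm{dist}(D_1,\partial \hat D_1) \ge \rho'\,\mathrm{diam}(D_1)$ needed to apply Lemma~\ref{lemma1}, and tracking how $\rho'$ propagates into the final constant $c(\kappa_a,\rho)$. Following \cite{BebendorfHackbusch:2003}, one can make the construction fully explicit by taking $W$ to be the $L^2(D_1)$-projection onto $X(D_1)$ of the space of piecewise-constant functions on a sufficiently fine rectangular mesh of $D_1$, as furnished by Proposition~\ref{FiniteDimensionalApprox}.
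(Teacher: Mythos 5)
Your proposal is correct and follows essentially the same route the paper takes (and merely sketches, deferring to Theorem 2.8 of \cite{BebendorfHackbusch:2003}): regard $G(\cdot,y)$, $y\in D_2$, as a family of $\mathcal{L}$-harmonic functions on the enlarged set $\hat D_1$, apply Lemma~\ref{lemma1} with $\hat E = D_1$, $E=\hat D_1$ and $\rho' = 1/(2\rho)$, and obtain the separable form $G_k$ as the $L^2(D_1)$-projection onto the resulting space $W$. Your geometric checks (that $y\notin \hat D_1$ for $y\in D_2$, and that the boundary portion on $\partial D$ is handled by the homogeneous Dirichlet condition) are exactly the points the cited reference addresses, so no gap remains.
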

\begin{remark}
In the recent work \cite{EngquistZhao:2018}, it is shown that the Green's function for high frequency Helmholtz equation is not highly separable due to the highly oscillatory phase.
\end{remark}
%According to the proof of the Prop. \ref{FiniteDimensionalApprox}, the upper bound of the dimension of the approximation subspace $k_{\eta}$ can be achieved, if one chooses 
%$u_i(x)$ belong to a space spanned by piece-wise constant functions. Furthermore, let $X(D_1)$ denote the space of $\mathcal{L}$-harmonic functions defined on $D_1$, i.e., $X(D_1)=\{u\in H^{1}_{loc}(D_1): a(u,\varphi)=0, \varphi \in C_0^{\infty} (D_{1})\}$. One can choose $\{u_1(x),...,u_k(x)\}$ as a set of orthogonal basis functions in $X(D_1)$. Then the coefficients $v_i(y)$ can be computed via Galerkin projection, i.e.,  $v_i(y)=(G(\cdot,y),u_i(\cdot))_{L^2 (D_1)}$. 
%\begin{remark}
%	The separable approximation \eqref{GreenFuncSepaApp1} reveals the intrinsic low-dimensional structures 
%	in the solution space (on local subdomains). Specifically, given the threshold of the error $\eta$ and 
%	admissible pairs $(D_1,D_2)$ of subdomains of $D$ (which satisfies the condition \eqref{AdmissiblePairs}), the upper bound of the dimension of the approximation subspace $k_{\eta}$ mainly depends on two parameters, i.e., $\kappa_{a}$ and $a_{\min}$.   
%\end{remark}

\subsection{Extension to elliptic PDEs with random coefficients}\label{sec:randomproblem}
\noindent
%We shall prove a similar type of separable approximation of the Green function still exists when the differential operators contain random coefficients. This in return gives us a new point of view to reconsider the elliptic PDEs with both multiscale and high-dimensional random coefficients and design efficient data-driven methods to solve them. 
Let's consider the following elliptic PDEs with random coefficients:
\begin{align}
\mathcal{L}(x,\omega) u(x,\omega) 
\equiv -\nabla\cdot\big(a(x,\omega)\nabla u(x,\omega)\big) &= f(x), 
\quad x\in D, \quad \omega\in\Omega,   \label{MsStoEllip_ModelEq}\\
u(x,\omega)&= 0, \quad \quad x\in \partial D, \label{MsStoEllip_ModelBC}
\end{align}
where $D \in \mathbb{R}^d$ is a bounded spatial domain and $\Omega$ is a sample space. The forcing function $f(x)$ is assumed to be in $L^2(D)$. The above equation can be used to model the flow pressure in porous media such as water aquifer and oil reservoirs, where the permeability field $a(x,\omega)$ is a random field whose exact values are infeasible to obtain in practice due to the low resolution of seismic data. We also assume that the problem is uniformly elliptic almost surely, namely, there exist $a_{\min}, a_{\max}>0$, such that
\begin{align}
P\big(\omega\in \Omega: a(x, \omega)\in [a_{\min},a_{\max}], \forall x \in D\big) = 1.
\label{asUniformlyElliptic1}
\end{align}
Note that we do not make any assumption on the regularity of the coefficient $a(x,\omega)$ in the physical space, which can be arbitrarily rough for each realization. 
For the problem \eqref{MsStoEllip_ModelEq}-\eqref{MsStoEllip_ModelBC}, the corresponding Green function is defined as
\begin{align}
\mathcal{L}(x,\omega)G(x,y,\omega) \equiv-\nabla_x\cdot(a(x,\omega)\nabla_x G(x,y,\omega)) &= \delta(x,y), \quad x\in D,\quad \omega\in\Omega,\\
G(x,y,\omega) &= 0, \quad  \quad x\in \partial D, 
\end{align} 
where $y\in D$ and $\delta(x,y)$ is the Dirac delta function. 
A key observation for the proof of Lemma \ref{lemma1} and Prop. \ref{FiniteDimensionalApprox} is that the projection of the space of piecewise constant functions defined on a multi-resolution rectangular mesh, depending only on the geometry of $D_1, D_2$, $\kappa_a$, and $\rho$, onto the $\mathcal{L}$-harmonic function provides a candidate for the finite dimensional subspace $W$.  Based on this observation, one can easily extend the statement in Prop. \ref{FiniteDimensionalApprox} to the family of Green's functions $G(x,y,\omega) $ parametrized by $\omega$
under the uniform ellipticity assumption \eqref{asUniformlyElliptic1}.
\begin{theorem}\label{ThmRandomGreenFuncSepaApp}
	Let $D_1, D_2 \subset D$ be two subdomains and $D_1$ be convex. Assume that there is $\rho>0$ such that $0 < \text{ \normalfont diam} (D_1) \leq \rho\text{ \normalfont dist} (D_1, D_2)$. 
	Then for any $\epsilon \in (0,1)$ there is a separable approximation
	\begin{align}
	G_k(x,y,\omega) = \sum_{i=1}^k u_i(x) v_i(y,\omega) \quad \text{with } k \leq 
	c^d(\kappa_a, \rho)  |\log \epsilon|^{d+1},
	\label{RandomGreenFuncSepaApp}
	\end{align}
	so that for all $y\in D_2$
	\begin{align}
	\| G(\cdot,y,\omega) - G_k(\cdot,y, \omega) \|_{L^2 (D_1)} \leq \epsilon \| G(\cdot,y, \omega) \| _{L^2(\hat{D}_1)} \quad  \text{a.s. in } \Omega,
	\end{align}
	where $\hat{D}_1 := \{ x \in D :  2\rho\text{\normalfont dist} (x, D_1) \leq \text{\normalfont diam} (D_1)\}$.
\end{theorem}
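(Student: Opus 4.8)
\emph{Proof proposal.} The plan is to show that the \emph{same} approximating subspace used in the deterministic Proposition~\ref{GreenFuncSepaApp} can be chosen independently of the realization $\omega$, so that only the expansion coefficients $v_i$ carry the randomness. The starting point is the observation emphasized above: in the proofs of Lemma~\ref{lemma1} and Proposition~\ref{GreenFuncSepaApp}, the approximant of an $\mathcal{L}$-harmonic function $u$ on $D_1$ is built as a finite sum $w=\sum_j w_j$ of piecewise-constant functions on a multi-resolution rectangular mesh. First I would isolate the data that this construction actually depends on: (i) the geometry of $D_1$ and $\hat D_1$ — equivalently $D_1$, $D_2$, $\rho$, through the admissibility condition \eqref{AdmissiblePairs}; (ii) the target accuracy $\epsilon$, which fixes the number of refinement levels $L\sim|\log\epsilon|$; and (iii) the ellipticity \emph{only} through the contrast $\kappa_a$, since the latter enters the per-level bound solely via the Caccioppoli constant $C(\kappa_a,\rho)$ in $\|\nabla u\|_{L^2(\hat E)}\le C(\kappa_a,\rho)\|u\|_{L^2(E)}$ on nested domains $\hat E\subset E$, together with the piecewise-constant approximation estimate of Proposition~\ref{FiniteDimensionalApprox}, whose constant depends only on $d$. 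Hence the mesh, and with it the finite-dimensional space $S$ of piecewise-constant functions on $\hat D_1$ with $\dim S\le c^d(\kappa_a,\rho)|\log\epsilon|^{d+1}$, can be fixed once and for all, with no reference to the particular coefficient beyond the bound $\kappa_a$, and the deterministic construction yields
\[
\mathrm{dist}_{L^2(D_1)}\!\big(u,\,S\big)\ \le\ \epsilon\,\|u\|_{L^2(\hat D_1)}\qquad\text{for every }\mathcal{L}\text{-harmonic }u\text{ on }\hat D_1
\]
and \emph{every} uniformly elliptic $\mathcal{L}$ in divergence form with contrast at most $\kappa_a$.

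Given this, fix an orthonormal basis $u_1,\dots,u_k$ of $S$ restricted to $D_1$, with $k=\dim S$; these are the desired $x$-functions and they do not depend on $\omega$. On the almost-sure event $\{\omega:\ a(x,\omega)\in[a_{\min},a_{\max}]\ \forall x\in D\}$ from \eqref{asUniformlyElliptic1}, the operator $\mathcal{L}(\cdot,\omega)$ is uniformly elliptic with contrast at most $\kappa_a$; and for $y\in D_2$ the admissibility condition forces $2\rho\,\mathrm{dist}(y,D_1)\ge 2\,\mathrm{diam}(D_1)>\mathrm{diam}(D_1)$, so $y\notin\hat D_1$ (with a positive gap), whence $G(\cdot,y,\omega)$ is $\mathcal{L}(\cdot,\omega)$-harmonic on $\hat D_1$. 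Applying the $\omega$-independent estimate above with $u=G(\cdot,y,\omega)$ gives $\mathrm{dist}_{L^2(D_1)}\big(G(\cdot,y,\omega),S\big)\le\epsilon\|G(\cdot,y,\omega)\|_{L^2(\hat D_1)}$ simultaneously for all such $\omega$ and all $y\in D_2$.

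To exhibit the separable form, set $v_i(y,\omega):=\inp{G(\cdot,y,\omega)}{u_i}_{L^2(D_1)}$ and $G_k(x,y,\omega):=\sum_{i=1}^k u_i(x)v_i(y,\omega)$, which is exactly the $L^2(D_1)$-orthogonal projection of $G(\cdot,y,\omega)$ onto $S$ and therefore realizes the distance; the previous paragraph then gives $\|G(\cdot,y,\omega)-G_k(\cdot,y,\omega)\|_{L^2(D_1)}\le\epsilon\|G(\cdot,y,\omega)\|_{L^2(\hat D_1)}$ for all $y\in D_2$, a.s. in $\Omega$, with $k\le c^d(\kappa_a,\rho)|\log\epsilon|^{d+1}$. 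A short measurability check — that $\omega\mapsto G(\cdot,y,\omega)$, hence each $v_i(y,\cdot)$, is measurable — follows from the measurable dependence of $\mathcal{L}(\cdot,\omega)^{-1}$ on $\omega$ and can be recorded as a remark.

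The main obstacle is the content of the first paragraph: one cannot use Proposition~\ref{GreenFuncSepaApp} as a black box, since that would only produce, for each fixed $\omega$, an $\omega$-dependent approximating space of the right dimension, which is not enough for a single expansion with $\omega$-free $u_i$. The real work is to revisit the Bebendorf--Hackbusch construction and verify the uniformity claim — that the multi-resolution mesh and every constant appearing in the telescoping estimate depend on $a$ only through $\kappa_a$ (and the fixed geometry and $\rho$). Once that uniformity is in hand, which is immediate from the form of the Caccioppoli inequality and of Proposition~\ref{FiniteDimensionalApprox}, the passage to the random setting is little more than bookkeeping plus the elementary measurability remark.
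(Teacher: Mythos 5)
Your overall strategy---freeze the Bebendorf--Hackbusch construction once and for all, observe that its constants involve the coefficient only through $\kappa_a$ (via Caccioppoli) together with the geometry and $\rho$, and let only the coefficients $v_i(y,\omega)$ carry the randomness---is exactly the one-paragraph argument the paper gives just before Theorem~\ref{ThmRandomGreenFuncSepaApp}. The problem is your concrete choice of the $\omega$-free space. The paper's candidate subspace is the \emph{projection} of the multi-resolution piecewise-constant space onto the $\mathcal{L}$-harmonic functions; you instead take $S$ to be the piecewise-constant space itself and assert $\mathrm{dist}_{L^2(D_1)}(u,S)\le\epsilon\|u\|_{L^2(\hat D_1)}$ with $\dim S\le c^d(\kappa_a,\rho)|\log\epsilon|^{d+1}$, calling this ``immediate'' from Caccioppoli and Proposition~\ref{FiniteDimensionalApprox}. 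It is not, and at that dimension it is false. In the proof of Lemma~\ref{lemma1} the $\epsilon$-accuracy comes from an iteration over $p\sim|\log\epsilon|$ nested domains in which each correction is the $L^2$-projection of a piecewise-constant approximant onto the ($\mathcal{L}$-dependent) harmonic space of the current subdomain: this is what keeps the residual $\mathcal{L}$-harmonic so Caccioppoli can be reapplied, and it is what makes the error contract geometrically while the meshes stay only $O\big(\mathrm{diam}(D_1)/(\rho\,|\log\epsilon|)\big)$ fine, with only $O(|\log\epsilon|^{d})$ cells per level. The $\epsilon$-accurate approximant therefore lies in the span of those harmonic projections, not in $S$; trading each projection for its underlying piecewise constant costs errors that sum to $O(\|u\|)$, not $O(\epsilon\|u\|)$. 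Indeed, for $a\equiv 1$ and $u$ a linear function (admissible under your ``every $\mathcal{L}$ with contrast at most $\kappa_a$'' claim), the distance to a space of piecewise constants on meshes that are only logarithmically fine is of order $|\log\epsilon|^{-1}\gg\epsilon$; reaching $\epsilon$ with piecewise constants alone needs mesh size $\sim\epsilon$, i.e.\ dimension $\sim\epsilon^{-d}$, destroying the polylogarithmic bound.

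So the delicate point is the one you flag and then dismiss: for each fixed $\omega$ the construction behind Proposition~\ref{GreenFuncSepaApp} produces a space of the right dimension, but that space involves projections onto the $\mathcal{L}(\cdot,\omega)$-harmonic functions and is therefore $\omega$-dependent; what is uniform in $\omega$ is the \emph{dimension bound} (through $\kappa_a$, $\rho$ and the geometry), which is all the paper's sketch and Remark~\ref{remark1} explicitly invoke. Passing from that to a single set of $\omega$-independent factors $u_i(x)$, as the theorem states, requires more than replacing the harmonic projections by the piecewise-constant scaffold: either retain the projections and argue separately why the $x$-factors can nevertheless be chosen $\omega$-free, or supply a genuinely different mechanism for a coefficient-independent approximating space. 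Your reduction supplies neither, so the central uniformity step of your proof does not stand as written.
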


The above Theorem shows that there exists a low dimensional linear subspace, e.g., spanned by $u_i(\cdot)$, that can approximate the family of functions $G(\cdot,y,\omega)$ well in $L^2(D_1)$ uniformly with respect to $y\in D_2$  and a.s. in $\omega$. Moreover, if $\mathrm{supp}(f)\subset D_2$, one can approximate the solution to \eqref{MsStoEllip_ModelBC} by the same space well in $L^2(D_1)$ uniformly with respect to $f$ and a.s. in $\omega$. Let 
\begin{equation}
u_f(x,\omega)=\int_{D_2} G(x,y,\omega)f(y) dy
\end{equation}
and
\begin{equation}
u^{\epsilon}_f(x,\omega)=\int_{D_2} G_k(x,y,\omega)f(y) dy=\sum_{i=1}^k u_i(x)\int_{D_2} v_i(y,\omega) f(y) dy.
\end{equation}
Hence
\begin{equation}
\begin{array}{l}
\|u_f(\cdot,\omega)-u^{\epsilon}_f(\cdot,\omega)\|^2_{L^2(D_1)}=\int_{D_1} \left[\int_{D_2} (G(x,y,\omega)-G_k(x,y,\omega))f(y) dy\right]^2 dx 
\\ \\
\le \|f\|_{L^2(D_2)}^2 \int_{D_2}\| G(\cdot,y,\omega) - G_k(\cdot,y, \omega) \|^2_{L^2 (D_1)} dy\le C(D_1, D_2, \kappa_a, d)\epsilon^2\|f\|_{L^2(D_2)}^2,
\end{array}
\end{equation}
a.s. in $\omega$ since $\| G(\cdot,y, \omega) \| _{L^2(\hat{D}_1)}$ is bounded by a positive constant that depends on $D_1, D_2, \kappa_a, d$ a.s. in $\omega$ due to uniform ellipticity \eqref{asUniformlyElliptic1}.
Although, the proof of high separability of the Green's function requires $x\in D_1, y\in D_2$  for well separated $D_1$ and $D_2$,  i.e., avoiding the singularity of the Green's function at $x=y$,  the above approximation of the solution $u$ in a domain disjoint with the support of $f$ seems to be valid for $u$ in the whole domain even when $f$ is a globally supported smooth function as shown in our numerical tests. 

% Although the above derivation requires the support of source distribution is disjoint with the domain of the solution  $\mathrm{supp}(f)\subset D_2\cap D_1$ 

\begin{remark}\label{remark1}
It is important to note that both the linear subspace $W$ and the bound for its dimension are independent of the randomness. Moreover, it is often possible to find a problem specific and data driven subspace with a dimension much smaller than the theoretical upper bound for $W$ (as demonstrated by our experiments). This key observation motivates our data-driven approach which can achieve a significant dimension reduction in the solution space. 
\end{remark}
%Notice that the upper bound $k_{\epsilon}$ can be obtained, if one chooses $u_i(x)$ belong to a space spanned by piece-wise constant functions. Most importantly, the upper bound $k_{\eta}$ may depend on the random sample $\omega$ though, one can choose a uniform upper bound $k_{\eta}$ in the problem  \eqref{MsStoEllip_ModelEq}-\eqref{MsStoEllip_ModelBC} that does not depend on the randomness, given the assumption 
%\eqref{asUniformlyElliptic1} is satisfied. This sheds light on developing new method to solve the challenging problem \eqref{MsStoEllip_ModelEq}-\eqref{MsStoEllip_ModelBC}, especially when the dimension of the random variable is high.
\begin{remark}
Although we present the problem and our data driven approach for the elliptic problem \eqref{MsStoEllip_ModelBC} with scalar random coefficients $a(x,\omega)$, all the statements can be directly extended when the random coefficient is replaced by a symmetric positive definite tensor $a_{i,j}(x,\omega), i,j,=1, \ldots, d$ with uniform ellipticity.
\end{remark}

\begin{remark}
In the recent work \cite{BrysonZhaoZhong:2019}, it is shown that a random field can have a large intrinsic complexity if it is rough, i.e., $a(x_1,\omega)$ and $a(x_2, \omega)$ decorrelate quickly in terms of $\|x_1-x_2\|$. However, when a random field, as rough as it can be, is input as the coefficient of an elliptic PDE, the intrinsic complexity of the resulting solution space, which depends on the coefficient highly nonlinearly and nonlocally, is highly reduced. This phenomenon can also be used to explain the severe ill-posedness of the inverse problem in which one tries to recover the coefficient of an elliptic PDE from the boundary measurements such as electrical impedance tomography (EIT).
\end{remark}

Before we end this subsection, we give a short review of existing methods for solving problem \eqref{MsStoEllip_ModelEq}-\eqref{MsStoEllip_ModelBC} involving random coefficients. There are basically two types
of methods. In intrusive methods, one represents the solution of \eqref{MsStoEllip_ModelEq} by $u(x,\omega)= \sum_{\alpha \in J} u_{\alpha}(x)H_{\alpha}(\omega)$, where $J$ is an index set, and $H_{\alpha}(\omega)$ are certain basis functions (e.g. orthogonal polynomials). Typical examples are the Wiener chaos expansion (WCE) and polynomial chaos expansion (PCE) method. Then, one uses Galerkin method to compute the expansion coefficients $u_{\alpha}(x)$; see \cite{Ghanem:91,Xiu:03,babuska:04,matthies:05,WuanHou:06,Najm:09} and reference therein. These methods have been successfully applied to many UQ problems, where the dimension of the random input is small. However, the number of basis functions increases exponentially with the dimension of random input, i.e., they suffer from the curse of dimensionality of both the input space and the output (solution) space.

In the non-intrusive methods, one can use the MC method or qMC method to solve \eqref{MsStoEllip_ModelEq}-\eqref{MsStoEllip_ModelBC}. However, the convergence rate is slow and the method becomes more expensive when the coefficient $a(x,\omega)$ contains multiscale features. Stochastic collocation methods explore the smoothness of the solutions in the random space and use certain quadrature points and weights to compute the solutions \cite{Xiu:05,Babuska:07}. Exponential convergence can be achieved for smooth solutions, but the quadrature points grow exponentially as the number of random variables increases. Sparse grids \cite{Griebel:04,Webster:08} can reduce the quadrature points to some extent \cite{Griebel:04}. However, the sparse grid method still becomes very expensive when the dimension of randomness is modestly high.

%Inspired by Theorem \ref{ThmRandomGreenFuncSepaApp} and Remark \ref{remark1}, our data driven method 
%the data-driven method \cite{ChengHouYanZhang:13} developed by the second author, we propose a new data-driven method to solve the problem  \eqref{MsStoEllip_ModelEq}-\eqref{MsStoEllip_ModelBC}. 
Instead of building random basis functions a priori or 
choosing collocation quadrature points based on the random coefficient $a(x,\omega)$ 
(see Eq.\eqref{ParametrizeRandomCoefficient}), we extract the low dimensional structure and a set of basis functions in the solution space directly from the data (or sampled solutions). Notice that the dimension of the extracted low dimensional space mainly depends on $\kappa_a$ (namely $a_{\min}$ and $a_{\max}$), and very mildly on the dimension of the random input in $a(x,\omega)$. Therefore, the curse of dimension can be alleviated.

\section{Derivation of the new data-driven method} \label{sec:DerivationNewMethod}
%\subsection{The data-driven method for local problems} \label{sec:LocalProblem}
%\noindent
In many physical and engineering applications, one needs to obtain the solution of the  Eq.\eqref{MsStoEllip_ModelEq} on a subdomain $\hat{D}\subseteq D$. For instance, in the reservoir simulation one is interested in computing the pressure value $u(x,\omega)$ on a specific subdomain $\hat{D}$. 
  Our method consists of offline and online stages. In the offline stage, we extract the low dimensional structure and a set of data-driven
basis functions from solution samples.  For example, a set of solution samples $\{u(x,\omega_i)\}_{i=1}^{N}$ can be obtained from measurements or generated by solving  \eqref{MsStoEllip_ModelEq}-\eqref{MsStoEllip_ModelBC} with coefficient samples $\{a(x,\omega_i)\}_{i=1}^{N}$.  

Let $V_l=\{u|_{\hat{D}}(x,\omega_1),...,u|_{\hat{D}}(x,\omega_N)\}$ denote the solution samples. We use POD  \cite{HolmesLumleyPOD:1993,Sirovich:1987,Willcox2015PODsurvey}, or a.k.a PCA,  to find the optimal subspace and its orthonormal basis to approximate $V_l$ to certain accuracy. Define the correlation matrix 
$\sigma_{ij}=<u(\cdot,\omega_i), u(\cdot,\omega_j)>_{\hat{D}}, i, j= 1, \ldots, N$. Let the eigenvalues and corresponding eigenfunctions of the correlation matrix be $\lambda_1\ge  \lambda_2 \ge \ldots \ge \ldots \ge \lambda_N \ge 0$ and $\phi_{1}(x)$, $\phi_{2}(x), \ldots, \phi_N(x)$ respectively. The space spanned by the leading $K$ eigenfunctions have the following approximation property to $V_l$.
%We apply the POD method to the solution samples $V_l$ and build a set of data-driven orthonormal basis functions $ \{\phi_{1}(x),\phi_{2}(x),...,\phi_{k}(x)\} $ with $k\ll N$ that optimally approximates the set of solution samples (in least square sense). The details of the POD method can be found in \cite{HolmesLumleyPOD:1993,Sirovich:1987,Willcox2015PODsurvey}. Moreover, we have the following approximation property.
\begin{proposition}\label{POD_proposition}
%Let $\lambda_1 \geq \lambda_2 \geq ... \geq \lambda_{K_l} \geq \lambda_{K_l+1} \geq ... > 0$ denote the positive eigenvalues of the correlation matrix  associated with the solution samples  $V_l$ and the corresponding (orthonormal) eigenfunctions are $\phi_{1}(x)$, ..., $\phi_{K_l}(x)$,.... Then, $\{\phi_{j}(x)\}_{j=1}^{K_l}$ will be the data-driven basis functions and we have the following:
\begin{align}
\frac{\sum_{i=1}^{N}\Big|\Big|u(x,\omega_{i})- \sum_{j=1}^{K}<u(\cdot,\omega_{i}),
\phi_j(\cdot)>_{\hat{D}}\phi_j(x)\Big|\Big|_{L^2(\hat{D})}^{2} }{\sum_{i=1}^{N}\Big|\Big|u(x,\omega_{i})\Big|\Big|_{L^2(\hat{D})}^{2}}=\frac{\sum_{s=K+1}^{N}  \lambda_s}{\sum_{s=1}^{N}  \lambda_s}.
\label{Prop_PODError}
\end{align}
%where the number of the basis $K_l$ will be determined according to the decay speed of the ratio $\frac{\sum^{N}_{j=K_l+1}\lambda _{j}}{\sum_{j=1}^{N}\lambda_{j}}$.	
\end{proposition}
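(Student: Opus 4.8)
The plan is to establish the energy identity \eqref{Prop_PODError} by the \emph{method of snapshots}: write the POD modes $\phi_j$ explicitly in terms of the samples $u(\cdot,\omega_i)$ and the eigenvectors of the correlation matrix $\sigma=(\sigma_{ij})_{i,j=1}^N$, and then reduce both sides of \eqref{Prop_PODError} to the spectral data of $\sigma$.

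First I would record the elementary facts about $\sigma$. Since $\sigma_{ij}=\langle u(\cdot,\omega_i),u(\cdot,\omega_j)\rangle_{\hat{D}}$ is a Gram matrix, $\sigma$ is symmetric positive semidefinite, so $\mathbb{R}^N$ has an orthonormal eigenbasis $w_1,\dots,w_N$ with $\sigma w_s=\lambda_s w_s$ and $\lambda_1\ge\cdots\ge\lambda_N\ge0$; write $r=\dim\mathrm{span}\{u(\cdot,\omega_i)\}_{i=1}^N=\mathrm{rank}\,\sigma$. For $1\le j\le r$ I define the data-driven basis function
\[
\phi_j(x):=\lambda_j^{-1/2}\sum_{i=1}^N (w_j)_i\, u(x,\omega_i),
\]
and, if $K>r$ is allowed, complete $\{\phi_j\}_{j\le r}$ arbitrarily to an $L^2(\hat{D})$-orthonormal family; any completing mode is orthogonal to $\mathrm{span}\{u(\cdot,\omega_i)\}$, hence will not affect the identity, and correspondingly $\lambda_j=0$ for $j>r$, so it suffices to treat $K\le r$. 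The computation $\langle\phi_j,\phi_l\rangle_{\hat{D}}=(\lambda_j\lambda_l)^{-1/2}\,w_j^{\top}\sigma w_l=(\lambda_j\lambda_l)^{-1/2}\lambda_l\,w_j^{\top}w_l=\delta_{jl}$ confirms orthonormality and is the precise sense in which $\phi_j$ is ``the eigenfunction associated with $\lambda_j$''.

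Next I would compute the expansion coefficients. Using the definition of $\phi_j$ together with $\sigma w_j=\lambda_j w_j$ and the symmetry of $\sigma$,
\[
\langle u(\cdot,\omega_i),\phi_j\rangle_{\hat{D}}=\lambda_j^{-1/2}\sum_{m=1}^N (w_j)_m\,\sigma_{im}=\lambda_j^{-1/2}(\sigma w_j)_i=\lambda_j^{1/2}(w_j)_i,
\]
so that $\sum_{i=1}^N\langle u(\cdot,\omega_i),\phi_j\rangle_{\hat{D}}^2=\lambda_j\|w_j\|_2^2=\lambda_j$. Since $\{\phi_j\}_{j=1}^K$ is $L^2(\hat{D})$-orthonormal, $P_K:=\sum_{j=1}^K\langle\,\cdot\,,\phi_j\rangle_{\hat{D}}\phi_j$ is an orthogonal projection, so by Pythagoras, for each $i$,
\[
\Big\|u(\cdot,\omega_i)-\sum_{j=1}^K\langle u(\cdot,\omega_i),\phi_j\rangle_{\hat{D}}\phi_j\Big\|_{L^2(\hat{D})}^2=\|u(\cdot,\omega_i)\|_{L^2(\hat{D})}^2-\sum_{j=1}^K\langle u(\cdot,\omega_i),\phi_j\rangle_{\hat{D}}^2.
\]
Summing over $i=1,\dots,N$, substituting $\sum_{i}\langle u(\cdot,\omega_i),\phi_j\rangle_{\hat{D}}^2=\lambda_j$, and using $\sum_{i=1}^N\|u(\cdot,\omega_i)\|_{L^2(\hat{D})}^2=\sum_{i=1}^N\sigma_{ii}=\mathrm{tr}\,\sigma=\sum_{s=1}^N\lambda_s$, the numerator of the left side of \eqref{Prop_PODError} becomes $\sum_{s=1}^N\lambda_s-\sum_{j=1}^K\lambda_j=\sum_{s=K+1}^N\lambda_s$ and the denominator becomes $\sum_{s=1}^N\lambda_s$, which is exactly the claimed identity (implicitly assuming the samples are not all identically zero, so the denominator is positive).

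The proof is essentially routine; the only point requiring care is the rank-deficient case, where some $\lambda_j$ vanish and the formula for $\phi_j$ is undefined. The resolution, already sketched above, is that $\langle u(\cdot,\omega_i),\phi_j\rangle_{\hat{D}}^2$ and $\lambda_j$ vanish simultaneously for $j>r$, so neither side of \eqref{Prop_PODError} is sensitive to how the orthonormal family is completed, nor to whether $K$ exceeds $r$; I expect this bookkeeping, rather than any analytic difficulty, to be the main (minor) obstacle. I would also add a remark that \eqref{Prop_PODError} is only the energy identity for this particular subspace; the stronger statement that $\mathrm{span}\{\phi_1,\dots,\phi_K\}$ minimizes the mean-square projection error over all $K$-dimensional subspaces of $L^2(\hat{D})$ is not asserted here, but follows from the same spectral bookkeeping together with a standard Courant--Fischer (min--max) argument.
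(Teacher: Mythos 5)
Your proof is correct. The paper itself gives no proof of Proposition \ref{POD_proposition}; it states the identity as a standard property of POD/PCA and defers to the cited literature, and your method-of-snapshots derivation (orthonormality of $\phi_j=\lambda_j^{-1/2}\sum_i (w_j)_i\,u(\cdot,\omega_i)$, the coefficient identity $\langle u(\cdot,\omega_i),\phi_j\rangle_{\hat D}=\lambda_j^{1/2}(w_j)_i$, Pythagoras, and the trace identity $\sum_i\|u(\cdot,\omega_i)\|^2_{L^2(\hat D)}=\sum_s\lambda_s$) is exactly the standard argument the paper implicitly relies on, with the rank-deficient case handled carefully to make the paper's loose phrase ``eigenfunctions of the correlation matrix'' precise.
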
 
First, we expect a fast decay in $\lambda_s$ so that a small $K\ll N$ will be enough to approximate the solution samples well in root mean square sense. 
Secondly, based on the existence of low dimensional structure implied by Theorem \ref{ThmRandomGreenFuncSepaApp}, we expect that the data-driven basis, $\phi_{1}(x)$, $\phi_{2}(x), \ldots, \phi_{K}(x)$,  can almost surely approximate the solution $u|_{\hat{D}}(x,\omega)$ well too 
under some sampling condition (see Section \ref{sec:DetermineNumberOfSamples}) by
\begin{align}
u|_{\hat{D}}(x,\omega) \approx \sum_{j=1}^{K}c_{j}(\omega)\phi_{j}(x), \quad \text{a.s. }
\omega \in \Omega,  \label{RB_expansion}
\end{align} 
where the data-driven basis functions $\phi_{j}(x)$, $j=1,...,K$ are defined on $\hat{D}$. 
The Prop.\ref{POD_proposition} still remains valid in the case $\hat{D}=D$, where the data-driven basis $\phi_{j}(x)$, $j=1,...,K$ can be used in the Galerkin approach to solve \eqref{MsStoEllip_ModelEq}-\eqref{MsStoEllip_ModelBC} on the whole domain $D$ (see Section \ref{sec:GlobalProblem}).

Now the problem is how to find $c_{j}(\omega)$ through an efficient online process given a new realization of $a(x,\omega)$. We prescribe several strategies in different setups.

\subsection{Parametrized randomness}\label{sec:parametrized}
In many applications,  $a(x,\omega)$ is parameterized by $r$ independent random variables,  i.e., 
\begin{align}
a(x,\omega) =  a(x,\xi_{1}(\omega),...,\xi_{r}(\omega)).
\label{ParametrizeRandomCoefficient}
\end{align}
Thus,  the solution can be represented as a function of these random variables as well, i.e., $u(x,\omega) = u(x,\xi_{1}(\omega),...,\xi_{r}(\omega))$.
Let $\vxi(\omega)=[\xi_1(\omega),\cdots,\xi_r(\omega)]^T$ denote the 
random input vector and $\textbf{c}(\omega)=[c_{1}(\omega),\cdots,c_{K}(\omega)]^T$ denote the vector of solution coefficients in \eqref{RB_expansion}. Now,  the problem can be viewed as constructing 
 a map from $\vxi(\omega)$ to $\textbf{c}(\omega)$, denoted by $\textbf{F}:\vxi(\omega)\mapsto \textbf{c}(\omega)$, which is nonlinear. We approximate this nonlinear map through the sample solution set. 
 Given a set of solution samples $\{u(x,\omega_i)\}_{i=1}^{N}$  corresponding to $\{\vxi(\omega_i)\}_{i=1}^{N}$, e.g., by solving \eqref{MsStoEllip_ModelEq}-\eqref{MsStoEllip_ModelBC} with $a(x,\xi_{1}(\omega_i),...,\xi_{r}(\omega_i))$,
from which the set of data driven basis $\phi_{j}(x), j=1,...,K$ is obtained using POD as described above,  we can easily compute the projection coefficients $\{\textbf{c}(\omega_i)\}_{i=1}^{N}$ of $u|_{\hat{D}}(x,\omega_i)$ on $\phi_{j}(x)$, $j=1,...,K$, i.e., $c_j(\omega_i)=<u(x,\omega_i), \phi_{j}(x)>_{\hat{D}}$.  From the data set,
$F(\vxi(\omega_i))= \textbf{c}(\omega_i)$, $i=1,...,N$, we construct the map $\textbf{F}$. Note the significant dimension reduction by reducing the map $\vxi(\omega)\mapsto u(x,\omega)$ to the map  $\vxi(\omega)\mapsto \textbf{c}(\omega)$.
We provide a few ways to construct $\textbf{F}$. 
\begin{itemize}
\item Interpolation. 
\\
When the dimension of the random input $r$ is small or moderate, one can use interpolation. In particular, if the solution samples correspond to $\vxi$ located on a (sparse) grid, standard polynomial interpolation can be used to approximate the coefficient $c_j$ at a new point of $\vxi$. If the solution samples correspond to $\vxi$ at scattered points or the dimension of the random input $r$ is moderate or high, one can first find the a few nearest neighbors to a new point efficiently using $k-d$ tree \cite{wald2006building} and then use moving least square approximation centered at the new point. 
\item Neural network.
\\
 When the dimension of the random input $r$ is high, interpolation approach becomes expensive and less accurate, we show that neural network seems to provide a satisfactory solution.
\end{itemize}
More implementation details will be explained in Section \ref{sec:NumericalExperiments} and the map $\textbf{F}$ is plotted based on interpolation.

%
%When the dimension of the random input $r$ is small, say $r=5$, we construct the mapping $\textbf{F}$ using the polynomial interpolation based on the training data set $\{\vxi(\omega_i), \textbf{c}(\omega_i)\}$, $i=1,...,N_1$. When the dimension of the random input $r$ is small or moderate, we can also use the sparse grid method \cite{Griebel:04} to sample the random input space and  construct the mapping $\textbf{F}$. When the dimension of the random input $r$ is high, we find that the polynomial interpolation or sparse-grid based sampling technique is not efficient. Instead, we choose the deep learning approach to approximate the mapping $\textbf{F}$. We refer the interested reader to  \cite{JinchaoXu:2018,QiangDU:2018,weinan2018deep,wang2019mesh} and references therein for the applications of deep learning methods in approximating multivariate functions and solving differential equations. 
%Alternatively, one can choose the nearest neighbors searching algorithm, such as $k-d$ tree \cite{wald2006building}, which efficiently finds the $k$ nearest neighbor grids and uses them to construct the mapping $\textbf{F}$. More details regarding the implementation of our method will be discussed in Section \ref{sec:NumericalExperiments}.
  
In the online stage, one can compute the solution $u(x,\omega)$ to \eqref{MsStoEllip_ModelEq}-\eqref{MsStoEllip_ModelBC} using the constructed mapping $\textbf{F}$. 
Given a new realization of $a(x,\xi_{1}(\omega_i),...,\xi_{r}(\omega_i))$, we plug $\vxi(\omega)$ into the constructed map $\textbf{F}$ and directly obtain $\textbf{c}(\omega)=\textbf{F}(\vxi(\omega))$ which are the projection coefficients of the solution on the data-driven basis. So we can quickly obtain the new solution $u|_{\hat{D}}(x,\omega)$ using Eq.\eqref{RB_expansion}, where the computational time is negligible. Once we obtain the numerical solutions, we can use them to compute statistical quantities of interest, such as mean, variance, and joint probability distributions.  
\begin{remark}
In Prop.\ref{POD_proposition} we construct the data-driven basis functions from eigen-decomposition of the correlation matrix  associated with the solution samples. Alternatively we can subtract the mean from the solution samples, compute the covariance matrix, and construct the basis functions from eigen-decomposition of the covariance matrix. 
\end{remark}

\subsection{Galerkin approach} \label{sec:GlobalProblem}
\noindent
In the case $\hat{D}=D$, we can solve \eqref{MsStoEllip_ModelEq}-\eqref{MsStoEllip_ModelBC} on 
the whole domain $D$ by the standard Galerkin formulation using the data driven basis for a new realization of $a(x,\omega)$.

%The method consists of offline and online stages. In the offine stage, we generate a set of coefficient
%samples $\{a(x,\omega_i)\}_{i=1}^{N}$ and solve \eqref{MsStoEllip_ModelEq}-\eqref{MsStoEllip_ModelBC} to get solution samples, denoted by $V_g=\{u(x,\omega_1),...,u(x,\omega_N)\}_{i=1}^{N}$. Then we apply the POD method to $V_g$ and build a set of data-driven basis functions. 
%%still denoted by $ \{\phi_{1}(x),...,\phi_{K}(x)\} $ with $K\ll N$. 
%\begin{proposition}[POD for global solutions]\label{POD_proposition2}
%Let $\lambda_1 \geq \lambda_2 \geq ... \geq \lambda_{K_g} \geq \lambda_{K_g+1} \geq ... > 0$ denote the positive eigenvalues of the correlation matrix associated with the solution samples  $V_g$ and the corresponding eigenfunctions are $\phi_{1}(x)$, ..., $\phi_{K_g}(x)$,.... Then, $\{\phi_{j}(x)\}_{j=1}^{K_g}$ will be the data-driven basis functions and we have the following error formula hold:
%\begin{align}
%	\frac{1}{N}\sum_{i=1}^{N}\Big|\Big|u(x,\omega_{i})- \sum_{j=1}^{K_g}\big(u(x,\omega_{i}),
%	\phi_j(x)\big)_{X}\phi_j(x)\Big|\Big|_{X}^{2} =\sum_{s=K_g+1}^{N}  \lambda_s,  
%	\label{Prop_PODError2}
%	\end{align}
%	where $ X=L^2(D) $ or $H^1(D)$ and  $K_g$ will be determined according to the decay speed of the ratio $\frac{\sum^{N}_{j=K_g+1}\lambda _{j}}{\sum_{j=1}^{N}\lambda_{j}}$.	
%\end{proposition} 
Once the data driven basis $\phi_{j}(x)$, $j=1,...,K$, which are defined on the domain $D$, are obtained from solution samples in the offline stage, 
%Notice that $\phi_{j}(x)$, $j=1,...,K$ provide accurate approximation to the solution samples $u(x,\omega_i)$, $i=1,...,N$ on the domain $D$. It is expected that under some mild continuity assumptions on $u(x,\omega)$, one can almost surely approximate the solution $u(x,\omega)$ using the data-driven basis functions.
 given a new realization of the coefficient $a(x,\omega)$, we approximate the corresponding solution as 
\begin{align}
u(x,\omega) \approx \sum_{j=1}^{K}c_{j}(\omega)\phi_{j}(x), \quad \text{a.s. }
\omega \in \Omega,  \label{RB_expansion2}
\end{align} 
and use the Galerkin projection to determine the coefficients $c_{j}(\omega)$, $j=1,...,K$ by solving the following linear system in the online stage,
%Substituting the approximation \eqref{RB_expansion2} into Eq.\eqref{MsStoEllip_ModelEq}, multiplying both side by $\phi_{l}(x)$, $l=1,...,K_g$, and taking integration over the domain $D$, we obtain a coupled linear system
\begin{align}
\sum_{j=1}^K \int_{D}a(x,\omega)c_{j}(\omega)\nabla\phi_{j}(x)\cdot\nabla\phi_{l}(x)dx  = \int_{D}f(x)\phi_{l}(x)dx, 
 \quad l=1,...,K.
 \label{GalerkinSystem}
\end{align}

\begin{remark}
The computational cost of solving the linear system \eqref{GalerkinSystem} is small compared to using a Galerkin method, such as the finite element method, directly for $u(x,\omega)$ because $K$ is much smaller than the degree of freedom needed to discretize $u(x,\omega)$. 
\end{remark}

If the coefficient $a(x,\omega)$ has the affine parameter dependence property \cite{RozzaPatera:2007}, 
i.e., $ a(x,\omega) = \sum_{n=1}^{r} a_{n}(x)\xi_{n}(\omega) $, we compute the terms that do not depend on randomness, including $\int_{D}a_{n}(x)\nabla\phi_{j}(x)\cdot\nabla\phi_{l}(x)dx$,
$\int_{D}f(x)\phi_{l}(x)dx$, $j,l=1,...,K$ and save them in the offline stage.  This leads to considerable savings in assembling the stiffness matrix for each new realization of the coefficient $a(x,\omega)$ in the online stage.
Of course, the affine form is automatically parametrized. Hence, one can also construct the map $\textbf{F}:\vxi(\omega)\mapsto \textbf{c}(\omega)$ as described in the previous Section \ref{sec:parametrized}. 
% we can adopt the 
%mapping approach that was used in solving the local problem. We directly construct the mapping $\textbf{F}:\vxi(\omega)\mapsto \textbf{d}(\omega)$ using the training data in the offline state, where $\textbf{d}(\omega)=[d_1\omega,...,d_{K_g}(\omega)]^{T}$.  
If the coefficient $a(x,\omega)$ does not admit an affine form, we can apply the empirical interpolation method (EIM) \cite{PateraMaday:2004} to convert $a(x,\omega)$ into an affine form. 

\subsection{Least square fitting from direct measurements at selected locations}\label{sec:LS}
In many applications, only samples (data) or measurements of $u(x,\omega)$ is available while the model of $a(x,\omega)$ or its realization is not known. In this case, we propose to compute the coefficients $\textbf{c}$ by least square fitting the measurements (values) of $u(x,\omega)$ at appropriately selected locations. First, as before, from a set of solutions samples, $u(x_j, \omega_i)$, measured on a mesh  $x_j \in \hat{D}, j=1, \ldots, J$, one finds a set of data driven basis $\phi_1(x_j), \ldots, \phi_K(x_j)$, e.g. using POD. For a new solution $u(x,\omega)$ measured at $x_1, x_2, \ldots, x_M$, one can set up the following least square problem to find $\vec{c}=[c_1, \ldots, c_K]^T$ such that $u(x,\omega)\approx \sum_{k=1}^K c_k\phi_k(x)$:
\begin{equation}
\label{eq:LS}
B \vec{c}=\vec{y}, \quad  \vec{y}=[u(x_1,\omega), \ldots, u(x_M,\omega)]^T, B=[\boldsymbol{\phi}^M_1, \ldots, \boldsymbol{\phi}^M_K]\in R^{M\times K},
\end{equation}
where $\boldsymbol{\phi}^M_k=[\phi_k(x_1), \ldots, \phi_k(x_M)]^T$. The key issue in practice is the conditioning of the least square problem \eqref{eq:LS}. One way is to select the measurement (sensor) locations $x_1, \ldots x_M$ such that rows of $B$ are as decorrelated as possible. We adopt the approach proposed in \cite{Kutz2017Sensor} in which a QR factorization with pivoting for the matrix of data driven basis is used to determine the measurement locations. More specifically, let $\Phi=[\boldsymbol{\phi}_1, \ldots, \boldsymbol{\phi}_K]\in R^{J\times K}$, $\boldsymbol{\phi}_k=[\phi_k(x_1), \ldots, \phi_k(x_J)]^T$. If $M=K$, QR factorization with column pivoting is performed on $\Phi^T$. If $M>K$, QR factorization with pivoting is performed on $\Phi\Phi^T$. The first $M$ pivoting indices provide the measurement  locations. More details can be found in \cite{Kutz2017Sensor} and Section \ref{sec:NumericalExperiments}.

\subsection{Extension to problems with parameterized force functions} \label{sec:ExtensionTOManyFx}
\noindent
In many applications, we are interested in solving multiscale elliptic PDEs with random coefficients in the multiquery setting. A model problem is given as follows, 
\begin{align}
-\nabla\cdot\big(a(x,\omega)\nabla u(x,\omega)\big) &= f(x,\theta), 
\quad x\in D, \quad \omega\in\Omega, \quad \theta \in \Theta,  \label{MsStoEllipMultiquery_Eq}\\
u(x,\omega)&= 0, \quad \quad x\in \partial D, \label{MsStoEllipMultiquery_BC}
\end{align}
where the setting of the coefficient $a(x,\omega)$ is the same as \eqref{ParametrizeRandomCoefficient}. 
Notice that the force function $f(x,\theta)$ is parameterized by $\theta\in \Theta$ and $\Theta$ is a 
parameter set. In practice, we often need to solve the problem \eqref{MsStoEllipMultiquery_Eq}-\eqref{MsStoEllipMultiquery_BC} with multiple force functions $f(x,\theta)$, which is known as the multiquery problem. It is computationally expensive to solve this kind of problem using traditional methods. 

Some attempts have been made in \cite{ZhangCiHouMMS:15,hou2019model}, where a data-driven stochastic method has been proposed to solve PDEs with random and multiscale coefficients. When the number of random variables in the coefficient $a(x,\omega)$ is small, say less than 10, the methods developed in \cite{ZhangCiHouMMS:15,hou2019model} can provide considerable savings in solving multiquery problems. However, they suffer from the curse of dimensionality of both the input space and the output (solution) space. 
Our method using data driven basis, which is based on extracting a low dimensional structure in the output space,  can be directly adopted to this situation. Numerical experiments are presented in Section \ref{sec:NumericalExperiments}.

%We adopt the mapping approach to address this type of challenging problem. Let us assume the force functions are parameterized by $k$ variables, $\gvec{\theta}=(\theta_1,...,\theta_k)$. We construct the mapping from the space spanned by the coeffcient and the force function to the space of the solution together, i.e, $\textbf{F}:(\vxi,\gvec{\theta})\mapsto \textbf{c}$, where $\vxi$ is used to parameterize the coefficient. This algorithm can be used to solve both the local and global problems in the multiquery setting. Depending on the dimension of the space $(\vxi,\gvec{\theta})$, one can choose tensorized uniform grids (or sparse grids) based polynomial approximation or the neural newtork approximation to 
%construct the mapping $\textbf{F}$. The detailed implementation of this algorithm and its performance will be demonstrated in Section  \ref{sec:Example3}.

\subsection{Determine a set of good learning samples} \label{sec:DetermineNumberOfSamples}
\noindent
A set of good solution samples is important for the construction of data-driven basis in the offline stage. 
Here we provide an error analysis which is based on the finite element formulation. However, the results extend to general Galerkin formulation. %We shall carry out our analysis for the global problem. 
%To demonstrate the main idea, we carry out our analysis for the global problem only, since the obtained result can be viewed as a upper bound for the local problems. 
First, we make a few assumptions.  
\begin{assumption} \label{assumption2}
	Suppose $a(x,\omega)$ has the following property: given $ \delta_1 > 0$, there exists an integer $N_{\delta_1}$ and a choice of snapshots $\{a(x,\omega_i)\}$, $i=1,...,N_{\delta_1}$ such that
	\begin{align} 
	\mathds{E}\left[\inf_{1\le i\le N_{\delta_1}} \big|\big|a(x,\omega) - a(x,\omega_i)\big|\big|_{L^\infty(D)}\right] \le \delta_1.  \label{asd}
	\end{align}
\end{assumption}
Let $\{a(x,\omega_i)\}_{i=1}^{N_{\delta_1}}$ denote the samples of the random coefficient. When the coefficient has an affine form, we can verify Asm. \ref{assumption2} and  provide a constructive way to sample snapshots 
$\{a(x,\omega_i)\}_{i=1}^{N_{\delta_1}}$ if we know the distribution of the random variables $\xi_{i}(\omega)$, $i=1,...,r$.

Let $V_h\subset H_{0}^{1}(D)$ denote a finite element space that is spanned by nodal basis functions on a mesh with size $h$ and $\tilde{V}_h \subset V_h$ denote the space spanned by the data-driven basis $\{\phi_{j}(x)\}_{j=1}^{K}$. We assume the mesh size is fine enough so that the finite element space can approximate the solutions to the underlying PDEs well. For each $a(x,\omega_i)$, let $u_h(x,\omega_i)\in V_h$ denote the FEM solution and $\tilde{u}_h(x,\omega_i)\in \tilde{V}_h$ denote the projection on the data-driven basis $\{\phi_{j}(x)\}_{j=1}^{K}$. 
%In this analysis, we choose space $X=H^1(D)$ when we construct the data-driven basis in \eqref{Prop_PODError2}.  ???
\begin{assumption} \label{assumption3}
	Given $\delta_2 > 0$, we can find a set of data-driven basis, $\phi_1, \ldots, \phi_{K_{\delta_2}}$ such that 
	\begin{align}
	||u_h(x,\omega_i)-\tilde{u}_h(x,\omega_i)||_{L^2(D)} \le \delta_2,\ \forall 1\le i \le K_{\delta_2}, \label{equation_asumption2}
	\end{align}
	where $\tilde{u}_h(x,\omega_i)$ is the $L^2$ projection of $u_h(x,\omega_i)$ onto the space spanned by $\phi_1, \ldots, \phi_{K_{\delta_2}}$.
\end{assumption}
Asm.\ref{assumption3} can be verified by setting the threshold in the POD method; see Prop.\ref{POD_proposition}. Now we present the following error estimate. 

\begin{theorem} \label{error_theorem1}
	Under Assumptions \ref{assumption2}-\ref{assumption3}, for any $\delta_i > 0$, $i=1,2$, we can choose the samples of the random coefficient $\{a(x,\omega_i)\}_{i=1}^{N_{\delta_1}}$ and the threshold in constructing the data-driven basis  accordingly, such that  
	\begin{align}
	\mathds{E}\left[\big|\big|u_h(x,\omega) - \tilde{u}_h(x,\omega)\big|\big|_{L^2(D)}\right] 
	\leq C\delta_1 + \delta_2,  \label{error_theorem}
	\end{align}
	where $C$ depends on $a_{\min}$, $f(x)$ and the domain $D$.
\end{theorem}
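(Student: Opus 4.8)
The plan is to bound, for a generic realization $\omega$, the reduced-basis projection error by two pieces --- the coefficient-sampling error supplied by Assumption \ref{assumption2} and the snapshot projection error supplied by Assumption \ref{assumption3} --- linked through a finite-element stability estimate with respect to the coefficient. Let $\{\omega_i\}_{i=1}^{N_{\delta_1}}$ be the snapshots from Assumption \ref{assumption2}, used to build the data-driven basis $\{\phi_j\}_{j=1}^{K_{\delta_2}}$ spanning $\tilde V_h$, and recall that $\tilde u_h(\cdot,\omega)$ denotes the $L^2(D)$-orthogonal projection of the FEM solution $u_h(\cdot,\omega)$ onto $\tilde V_h$. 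Fix $\omega$ and pick an index $i=i(\omega)$ attaining $\min_{1\le i\le N_{\delta_1}}\|a(\cdot,\omega)-a(\cdot,\omega_i)\|_{L^\infty(D)}$ (a minimum over finitely many indices is attained, and choosing the smallest such index makes $i(\cdot)$ measurable). Since $\tilde u_h(\cdot,\omega)$ is the best $L^2(D)$-approximation of $u_h(\cdot,\omega)$ from $\tilde V_h$ and $\tilde u_h(\cdot,\omega_i)\in\tilde V_h$,
\[
\|u_h(\cdot,\omega)-\tilde u_h(\cdot,\omega)\|_{L^2(D)} \le \|u_h(\cdot,\omega)-\tilde u_h(\cdot,\omega_i)\|_{L^2(D)} \le \|u_h(\cdot,\omega)-u_h(\cdot,\omega_i)\|_{L^2(D)} + \delta_2,
\]
where the last step uses the triangle inequality and Assumption \ref{assumption3}.

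Next I would establish the FEM perturbation estimate $\|u_h(\cdot,\omega)-u_h(\cdot,\omega_i)\|_{L^2(D)} \le C\,\|a(\cdot,\omega)-a(\cdot,\omega_i)\|_{L^\infty(D)}$ with $C$ depending only on $a_{\min}$, $f$ and $D$. Subtracting the Galerkin identities for the coefficients $a(\cdot,\omega)$ and $a(\cdot,\omega_i)$ on $V_h$ and testing the difference with $e_h:=u_h(\cdot,\omega)-u_h(\cdot,\omega_i)\in V_h$ gives
\[
\int_D a(x,\omega)\,|\nabla e_h|^2\,dx = -\int_D \big(a(x,\omega)-a(x,\omega_i)\big)\,\nabla u_h(x,\omega_i)\cdot\nabla e_h\,dx .
\]
Using $a\ge a_{\min}$ on the left, Cauchy--Schwarz with $\|a(\cdot,\omega)-a(\cdot,\omega_i)\|_{L^\infty(D)}$ on the right, and the a priori bound $\|\nabla u_h(\cdot,\omega_i)\|_{L^2(D)}\le C_P\|f\|_{L^2(D)}/a_{\min}$ (obtained by testing the Galerkin identity for $\omega_i$ against $u_h(\cdot,\omega_i)$ and invoking the Poincar\'e inequality on $D$, with constant $C_P$), one gets $\|\nabla e_h\|_{L^2(D)}\le \big(C_P\|f\|_{L^2(D)}/a_{\min}^2\big)\|a(\cdot,\omega)-a(\cdot,\omega_i)\|_{L^\infty(D)}$; a final application of Poincar\'e converts this to the desired $L^2$ bound with $C=C_P^2\|f\|_{L^2(D)}/a_{\min}^2$, which indeed depends only on $a_{\min}$, $f$ and $D$.

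Finally, combining the two displays, taking the expectation, and invoking Assumption \ref{assumption2} yields
\[
\mathds{E}\big[\|u_h(\cdot,\omega)-\tilde u_h(\cdot,\omega)\|_{L^2(D)}\big] \le C\,\mathds{E}\Big[\min_{1\le i\le N_{\delta_1}}\|a(\cdot,\omega)-a(\cdot,\omega_i)\|_{L^\infty(D)}\Big] + \delta_2 \le C\delta_1 + \delta_2,
\]
which is the claim. I do not expect a deep difficulty here: the analytic heart is the elementary coefficient-to-solution stability estimate above. The steps most prone to subtlety are bookkeeping ones --- ensuring the snapshots in Assumption \ref{assumption2} are exactly those from which the data-driven basis is built so that Assumption \ref{assumption3} applies to the selected $\omega_i$ (equivalently, that $K_{\delta_2}$ is chosen large enough to control the projection error of all $N_{\delta_1}$ snapshots), and verifying measurability of $\omega\mapsto u_h(\cdot,\omega)$ and of the index selection $\omega\mapsto i(\omega)$ so that the expectations above are well defined.
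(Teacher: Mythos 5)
Your proof is correct and follows essentially the same route as the paper: the heart in both cases is the coefficient-perturbation stability estimate for the FEM solutions (yielding $\|u_h(\cdot,\omega)-u_h(\cdot,\omega_i)\|_{L^2(D)}\le C\|a(\cdot,\omega)-a(\cdot,\omega_i)\|_{L^\infty(D)}$ with $C$ depending on $a_{\min}$, $f$, $D$), combined with Assumptions \ref{assumption2}--\ref{assumption3} and an expectation over $\omega$. The only differences are cosmetic: by invoking the best-$L^2$-approximation property of the projection onto $\tilde V_h$ you obtain a two-term splitting, whereas the paper uses a three-term triangle inequality and bounds the extra term $\|\tilde u_h(\cdot,\omega_i)-\tilde u_h(\cdot,\omega)\|_{L^2(D)}$ by the same perturbation estimate; and your energy-testing derivation with $\|f\|_{L^2(D)}$ is a slightly cleaner version of the paper's dual-norm argument.
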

\begin{proof}
	Given a coefficient $a(x,\omega)$, let $u_h(x,\omega)$ and $\tilde{u}_h(x,\omega)$ be the corresponding FEM solution and data-driven solution, respectively. We have
	\begin{align} \label{proof_basis_error}
	&\big|\big|u_h(x,\omega) - \tilde{u}_h(x,\omega)\big|\big|_ {L^2(D)} \nonumber\\
	\le &\big|\big|u_h(x,\omega) - u_h(x,\omega_i)\big|\big|_{L^2(D)} + 
	\big|\big|u_h(x,\omega_i) - \tilde{u}_h(x,\omega_i)\big|\big|_{L^2(D)} + \big|\big|\tilde{u}_h(x,\omega_i) - \tilde{u}_h(x,\omega)\big|\big|_{L^2(D)}, \nonumber\\
	:=& I_1 + I_2+ I_3,
	\end{align}
	where $u_h(x,\omega_i)$ is the solution corresponding to the coefficient $a(x,\omega_i)$ and $\tilde{u}_h(x,\omega_i)$ is its projection. Now we estimate the error term $I_1$ first. In the sense of weak form, we have
	\begin{align}
	\int_{D}a(x,\omega)\nabla u_h(x,\omega)\cdot \nabla v_h(x)dx=\int_{D}f(x)v_h(x), \quad \text{for all} \quad v_h(x)\in V_h,
	\label{FEMsolutionWeakForm1} 
	\end{align}
	and
	\begin{align} 
	\int_{D}a(x,\omega_i)\nabla u_h(x,\omega_i)\cdot\nabla v_h(x)dx=\int_{D}f(x)v_h(x), \quad \text{for all} \quad v_h(x)\in V_h.
	\label{FEMsolutionWeakForm2} 
	\end{align}
	Subtracting the variational formulations \eqref{FEMsolutionWeakForm1}-\eqref{FEMsolutionWeakForm2} for 
	$u_h(x,\omega)$ and $u_h(x,\omega_i)$, we find that for all $v_h(x)\in V_h$, 
	\begin{align}
	\int_{D}a(x,\omega)\nabla (u_h(x,\omega)-u_h(x,\omega_i))\cdot\nabla v_h(x)dx
	=-\int_{D}(a(x,\omega)-a(x,\omega_i))\nabla u_h(x,\omega_i)\cdot\nabla v_h(x).  
	\label{FEMsolutionWeakForm3} 
	\end{align}
	Let $w_h(x)=u_h(x,\omega)-u_h(x,\omega_i)$ and $L(v_h)=-\int_{D}(a(x,\omega)-a(x,\omega_i))\nabla u_h(x,\omega_i)\cdot\nabla v_h(x)$ denote the linear form. Eq.\eqref{FEMsolutionWeakForm3} means that 
	$w_h(x,\omega)$ is the solution of the weak form $\int_{D}a(x,\omega)\nabla w_h\cdot\nabla v_h(x)dx=L(v_h)$. Therefore, we have  
	\begin{align}
	\big|\big|w_h(x)\big|\big|_ {H^1(D)}\leq \frac{||L||_{H^1(D)}}{a_{\min}}.
	\label{EstimateError}
	\end{align}
	Notice that 
	\begin{align}
	||L||_{H^1(D)} =\max_{||v_h||_{H^1(D)}=1}|L(v_h)|&\leq ||a(x,\omega)-a(x,\omega_i)||_{L^\infty(D)}
	||u_h(x,\omega_i)||_{H^1(D)},\nonumber \\
	&\leq ||a(x,\omega)-a(x,\omega_i)||_{L^\infty(D)}\frac{||f(x)||_{H^1(D)}}{a_{\min}}.
	\label{EstimateError2}
	\end{align}
	Since $w_h(x)=0$ on $\partial D$, combining Eqns.\eqref{EstimateError}-\eqref{EstimateError2} and using the Poincar\'e inequality on $w_h(x)$, we obtain an estimate for the term $I_1$ as 
	\begin{align}
	\big|\big|u_h(x,\omega) - u_h(x,\omega_i)\big|\big|_{L^2(D)} &\leq C_1\big|\big|u_h(x,\omega) - u_h(x,\omega_i)\big|\big|_{H^1(D)} \nonumber \\
	&\leq C_1||a(x,\omega)-a(x,\omega_i)||_{L^\infty(D)}\frac{||f(x)||_{H^1(D)}}{a_{\min}^2},
	\label{EstimateError3}
	\end{align}
	where $C_1$ only depends on the domain $D$. For the term $I_3$ in Eq.\eqref{proof_basis_error}, we can similarly get  
	\begin{align}
	\big|\big|\tilde{u}_h(x,\omega_i) - \tilde{u}_h(x,\omega)\big|\big|_{L^2(D)} 
	\leq C_1||a(x,\omega)-a(x,\omega_i)||_{L^\infty(D)}\frac{||f(x)||_{H^1(D)}}{a_{\min}^2}.
	\label{I3}
	\end{align}
	The term $I_2$ in Eq.\eqref{proof_basis_error} can be controlled according to the Asm.\ref{assumption3}. 
	Combining the estimates for terms $I_1$, $I_2$ and $I_3$ and integrating 
	over the random space, we prove the theorem. 
	%and using the assumptions \ref{assumption2} and \ref{assumption3}, we obtain $\big|\big|u_h(x,\omega) - \tilde{u}_h(x,\omega)\big|\big|_ {H^1(D^{k})}\le C\delta_1/H + \delta_2$. Since this estimate holds almost surely for each realization $\omega\in\Omega$, we integrate over the random space and prove the theorem.  	
\end{proof}   
 
Theorem \ref{error_theorem1} indicates that the error between $u_h(x,\omega)$ and its approximation $\tilde{u}_h(x,\omega)$ using the data driven basis consists of two parts. The first part depends on how well the random coefficient is sampled. While the second part depends on the truncation threshold in constructing the data-driven basis from the solution samples. In practice, a balance of these two factors and the discretization error (of the numerical method used to solve the PDEs) gives us the guidance on how to choose solution samples and truncation threshold in the POD method to achieve optimal accuracy.  Again, the key advantage for our data driven approach for this form of elliptic PDEs is the low dimensional structure in the solution space which provides a significant dimension reduction.

\section{Numerical experiments} \label{sec:NumericalExperiments}
\noindent
In this section we will present various numerical experiments to demonstrate the accuracy and efficiency of our proposed data-driven method. 
\subsection{An example with five random variables}\label{sec:Example1}
\noindent
We consider a multiscale elliptic PDE with a random coefficient that is defined on a square domain $D=[0,1]\times[0,1]$,
\begin{align}\label{randommultiscaleelliptic}
\begin{split}
-\nabla\cdot(a(x,y,\omega)\nabla u(x,y,\omega)) &= f(x,y), \quad (x,y)\in D, \omega\in\Omega,\\
u(x,y,\omega)&=0, \quad \quad (x,y)\in\partial D.
\end{split}
\end{align}
In this example, the coefficient $a(x,y,\omega)$ is defined as 
\begin{align} 
a(x,y,\omega) =& 0.1 + \frac{2+p_1\sin(\frac{2\pi x}{\epsilon_1})}{2-p_1\cos(\frac{2\pi y}{\epsilon_1})} \xi_1(\omega)
+ \frac{2+p_2\sin(\frac{2\pi (x+y)}{\sqrt{2}\epsilon_2})}{2-p_2\sin(\frac{2\pi (x-y)}{\sqrt{2}\epsilon_2})}\xi_2(\omega)
+ \frac{2+p_3\cos(\frac{2\pi (x-0.5)}{\epsilon_3})}{2-p_3\cos(\frac{2\pi (y-0.5)}{\epsilon_3})}\xi_3(\omega) \nonumber \\
&+ \frac{2+p_4\cos(\frac{2\pi (x-y)}{\sqrt{2}\epsilon_4})}{2-p_4\sin(\frac{2\pi (x+y)}{\sqrt{2}\epsilon_4})}\xi_4(\omega)
+ \frac{2+p_5\cos(\frac{2\pi (2x-y)}{\sqrt{5}\epsilon_5})}{2-p_5\sin(\frac{2\pi (x+2y)}{\sqrt{5}\epsilon_5})}\xi_5(\omega), \label{coefficientofexample1}
\end{align}
where $[\epsilon_1,\epsilon_2,\epsilon_3,\epsilon_4,\epsilon_5]=[\frac{1}{47},\frac{1}{29},\frac{1}{53},\frac{1}{37},\frac{1}{41}]$, $[p_1,p_2,p_3,p_4,p_5]=[1.98,1.96,1.94,1.92,1.9]$, and $\xi_i(\omega)$, $i=1,...,5$ are i.i.d. uniform random variables in $[0,1]$. The contrast ratio in the coefficient \eqref{coefficientofexample1} is $\kappa_a\approx 4.5\times 10^3$. The force function  is $f(x,y) = \sin(2\pi x)\cos(2\pi y)\cdot I_{D_2}(x,y)$, where $I_{D_2}$ is an indicator function defined on $D_2=[\frac{1}{4},\frac{3}{4}]\times[\frac{1}{16},\frac{5}{16}]$. 
The coefficient \eqref{coefficientofexample1} is highly oscillatory in the physical space. 
Therefore, one needs a fine discretization to resolve the small-scale variations in the problem. 
We shall show results for the solution to \eqref{randommultiscaleelliptic} with coefficient \eqref{coefficientofexample1} in: (1) a restricted subdomain $D_1=[\frac{1}{4},\frac{3}{4}]\times[\frac{11}{16},\frac{15}{16}]$ away from the support $D_2$ of the source term $f(x,y)$; and (2) the full domain $D$.

In all of our numerical experiments, we use the same uniform triangulation to implement the standard FEM and  choose mesh size $h=\frac{1}{512}$ in order to resolve the multiscale information. We use $N=2000$ samples in the offline stage to construct the data-driven basis and determine the number of basis $K$ according to the decay rate of the eigenvalues of the correlation matrix of the solution samples, i.e., $\sigma_{ij}=<u(x,\omega_i),u(x,\omega_j)>, i,j=1, \dots, N$.
%the prescribed accuracy $\frac{\sum_{j=N}^{K+1}\lambda _{j}}{\sum_{j=1}^{N}\lambda_{j}}$.

In Figure \ref{fig:Example1localeigenvalues}, we show the decay property of eigenvalues. Specifically, we show the magnitude of the eigenvalues in Figure \ref{fig:Example1localeigenvalues1a} and the ratio of the accumulated sum of the leading eigenvalues over the total sum in Figure \ref{fig:Example1localeigenvalues1b}. These results and Prop.\ref{POD_proposition} imply that a few leading eigenvectors will provide a set of  data-driven basis that can approximate all solution samples well. 
\begin{figure}[tbph]
	\centering 
	\begin{subfigure}[b]{0.45\textwidth}
		\includegraphics[width=1.0\linewidth]{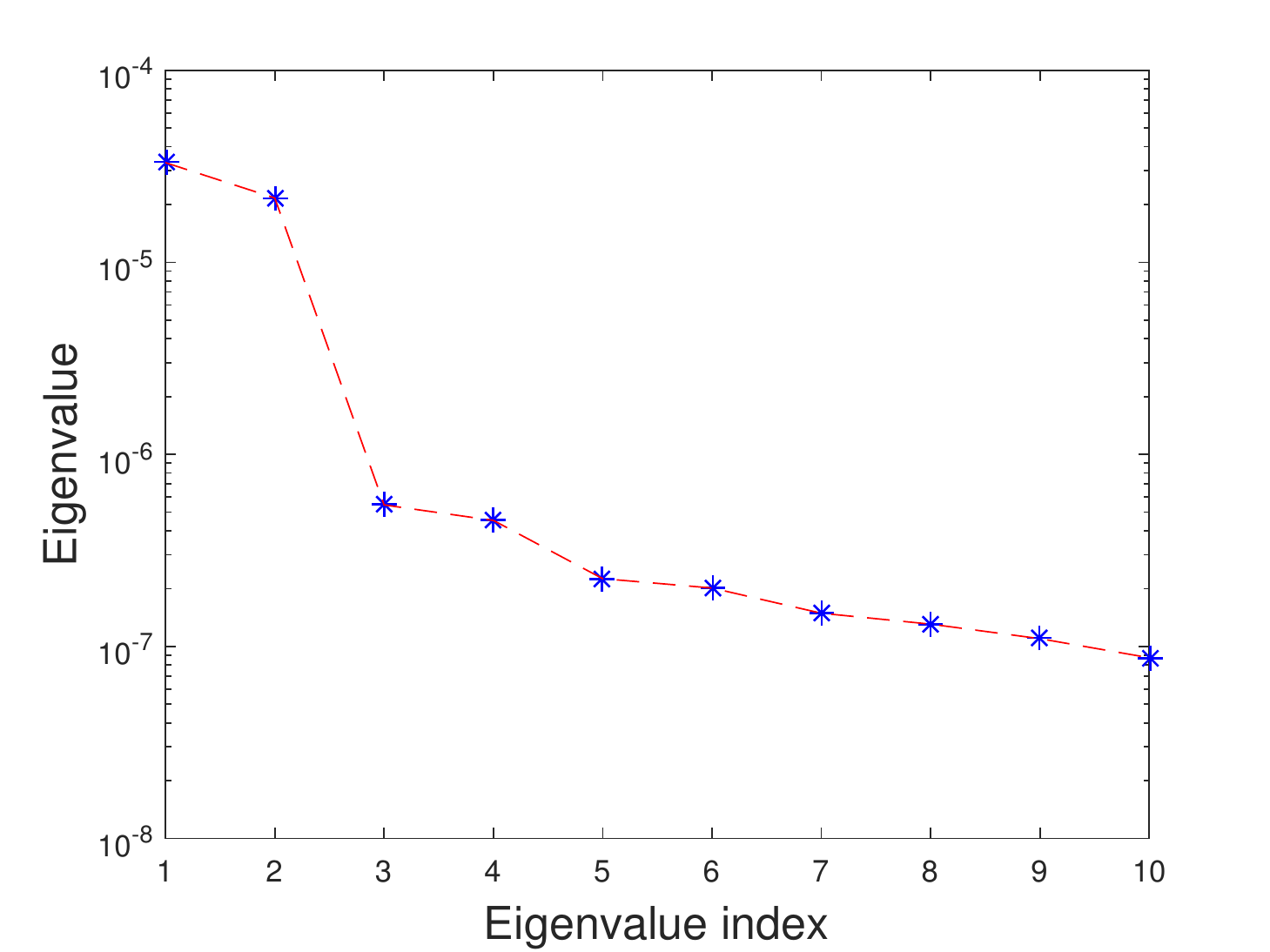} 
		\caption{ Decay of eigenvalues.}
		\label{fig:Example1localeigenvalues1a}
	\end{subfigure}
	\begin{subfigure}[b]{0.45\textwidth}
		\includegraphics[width=1.0\linewidth]{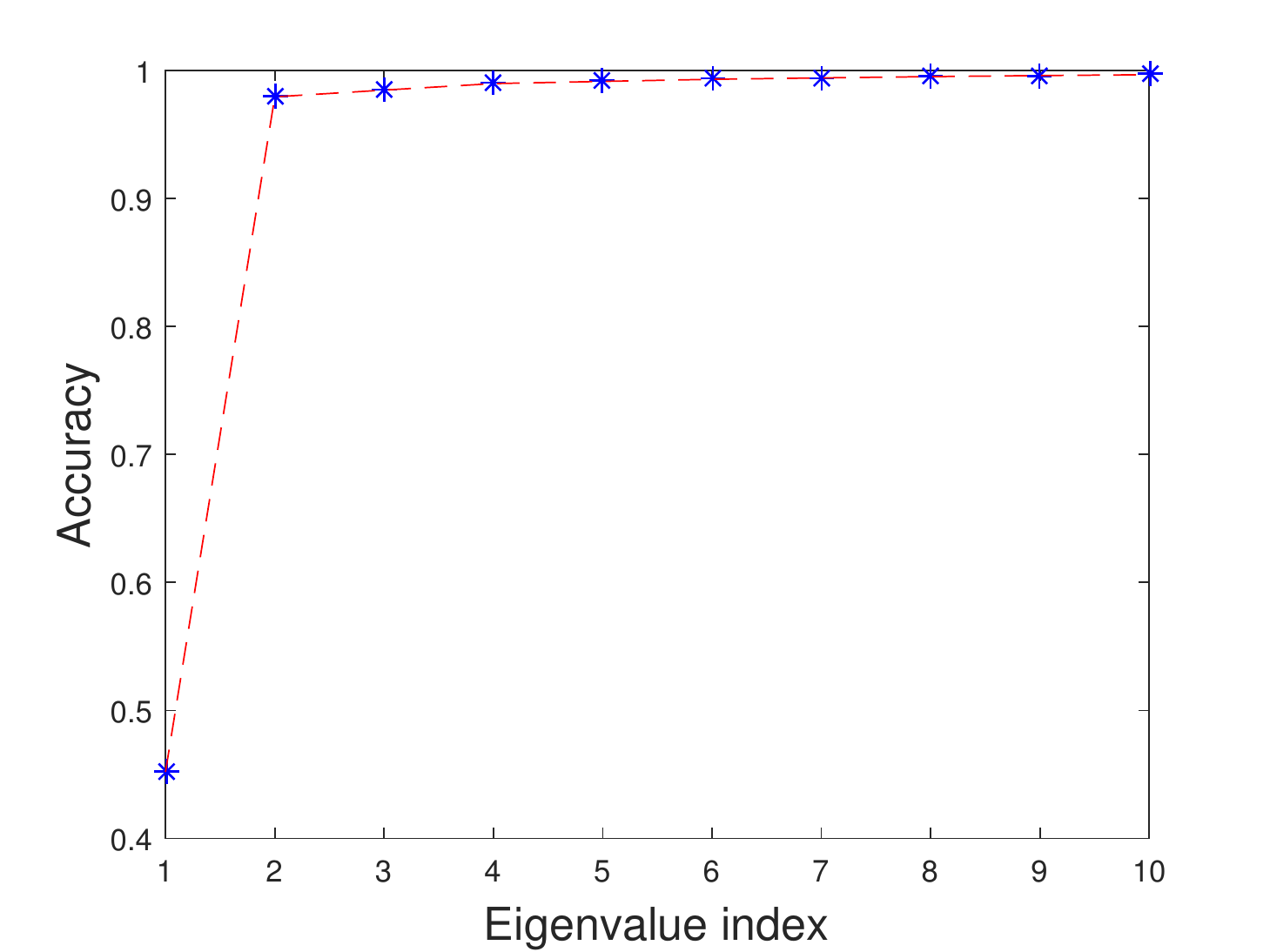} 
		\caption{%Ratio of the accumulated sum of the eigenvalue over the total sum 
				%$\frac{ \sum_{j=1}^{n}\lambda_{j}}{\sum_{j=1}^{N}\lambda_{j} }$, $n=1,2,...$.
			 $1-\sqrt{\sum_{j=n+1}^{N}\lambda_{j}/\sum_{j=1}^{N}\lambda_{j}}$, $n=1,2,...$.} 
		\label{fig:Example1localeigenvalues1b}
	\end{subfigure}
	\caption{The decay properties of the eigenvalues in the local problem of Sec.\ref{sec:Example1}.}
	\label{fig:Example1localeigenvalues}
\end{figure} 

After we construct the data-driven basis, we use the spline interpolation to approximate the mapping $\textbf{F}:\vxi \mapsto \textbf{c}(\vxi)$. Notice that the coefficient of \eqref{coefficientofexample1}
is parameterized by five i.i.d. random variables. We can partition the random space  $[\xi_1(\omega),\xi_2(\omega),\cdots,\xi_5(\omega)]^T\in [0,1]^5$ into a set of uniform grids in order to 
construct the mapping $\textbf{F}$. Here we choose $N_1=9^5$ samples. We remark that we can choose other sampling strategies, such as sparse-grid points and Latin hypercube points.  In Figure \ref{fig:Example1localbasismapping}, we show the profiles of the first two data-driven basis functions $\phi_{1}$ and $\phi_{2}$ and the plots of the mappings $c_1(\xi_1,\xi_2;\xi_3,\xi_4,\xi_5)$ and $c_2(\xi_1,\xi_2;\xi_3,\xi_4,\xi_5)$ with fixed $[\xi_3,\xi_4,\xi_5]^T=[0.25, 0.5, 0.75]^T$. One can see that the data-driven basis functions contain multiscale features and the mapping $c_1(\xi_1,\xi_2;\xi_3,\xi_4,\xi_5)$ and $c_2(\xi_1,\xi_2;\xi_3,\xi_4,\xi_5)$ are smooth with respect to $\xi_i$, $i=1,2$. The behaviors of other data-driven basis functions and the mappings are similar (not shown here).

%Since this is the main focus of our paper,  we do not want to complicate the presentation by pursuing this direction. 

\begin{figure}[tbph]
	\centering	
	\begin{subfigure}[b]{0.45\textwidth}
		\includegraphics[width=1.0\linewidth]{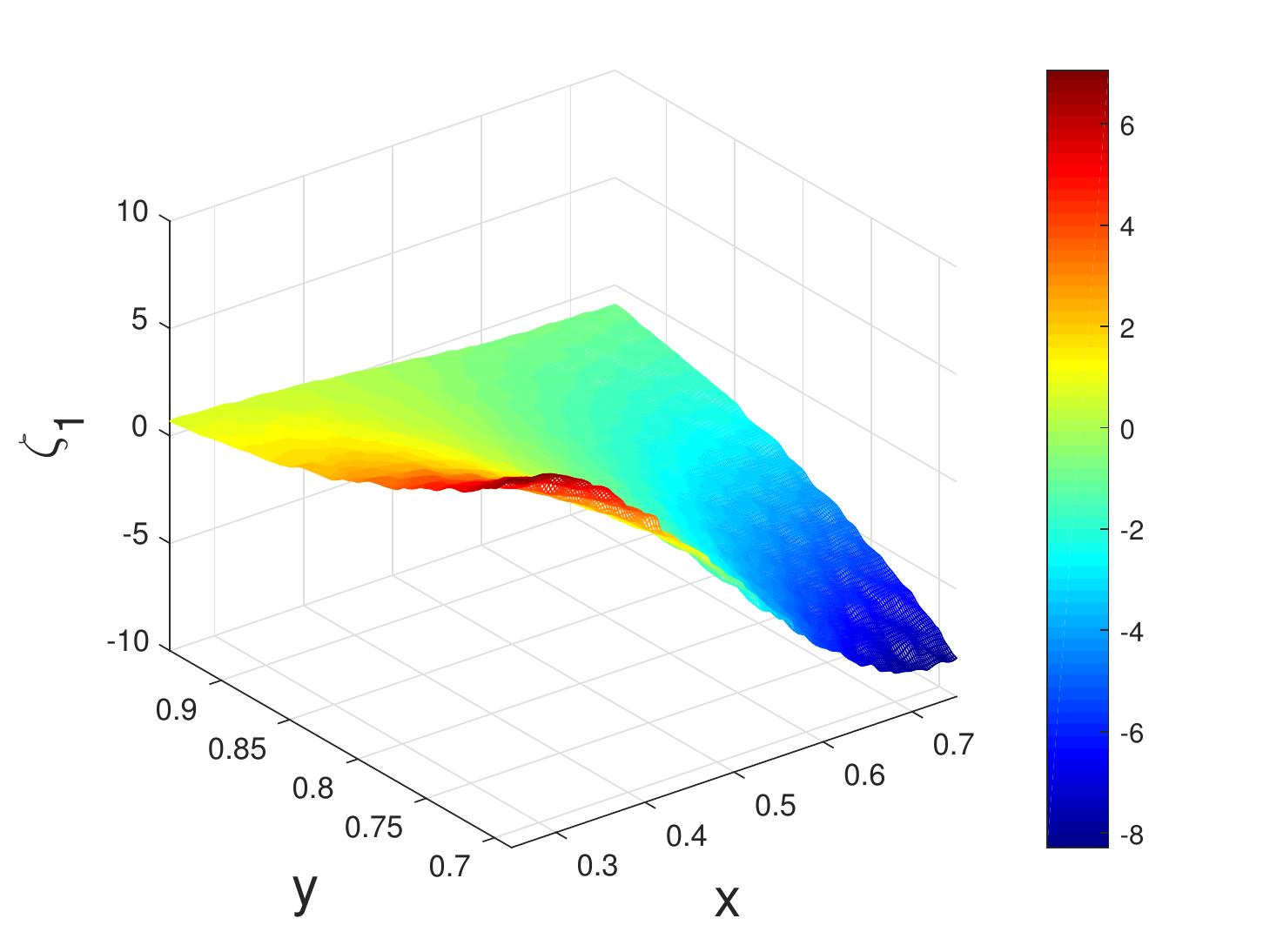}\\
		\includegraphics[width=1.0\linewidth]{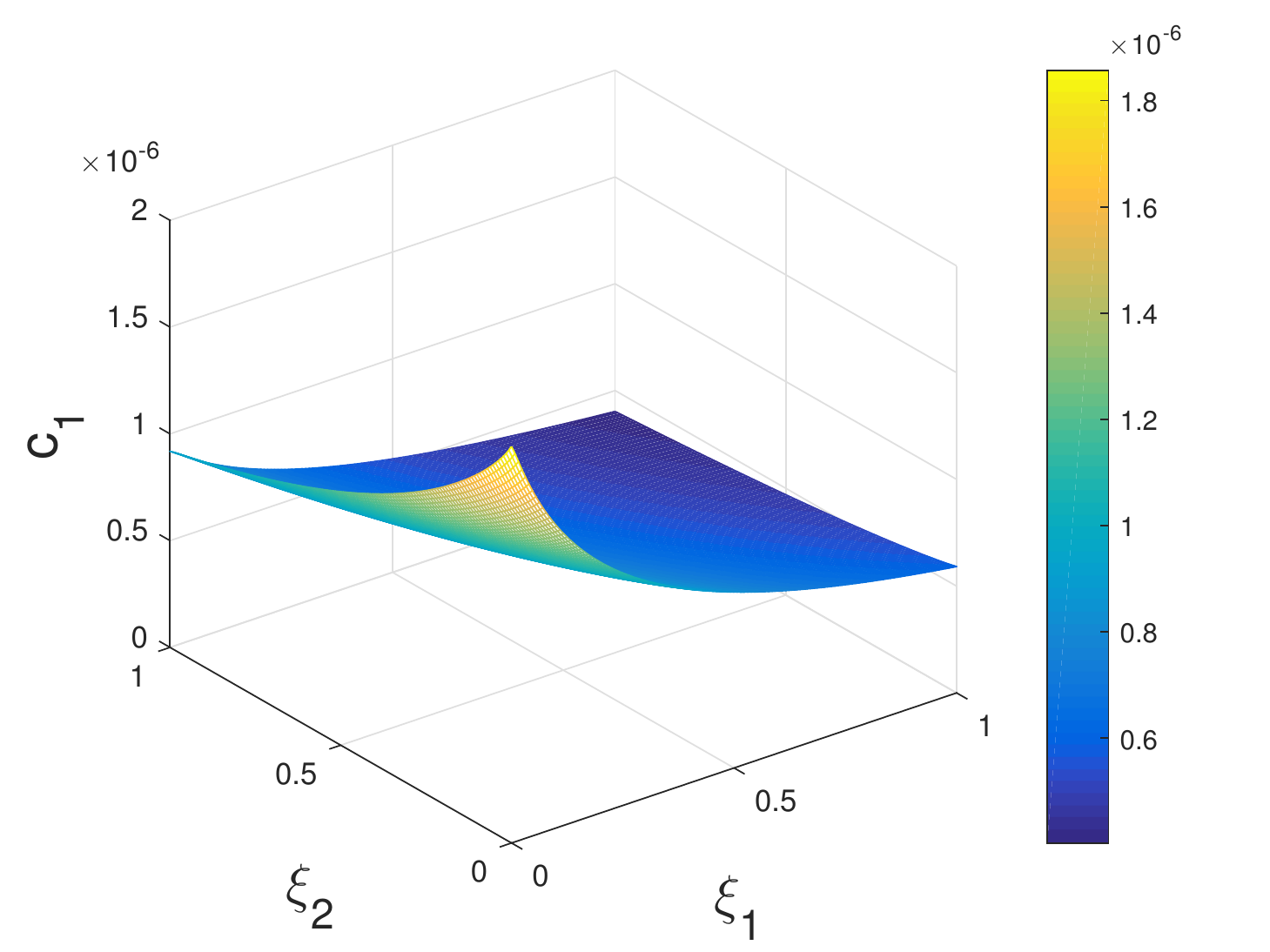}\\
	\end{subfigure}
	\begin{subfigure}[b]{0.45\textwidth}
		\includegraphics[width=1.0\linewidth]{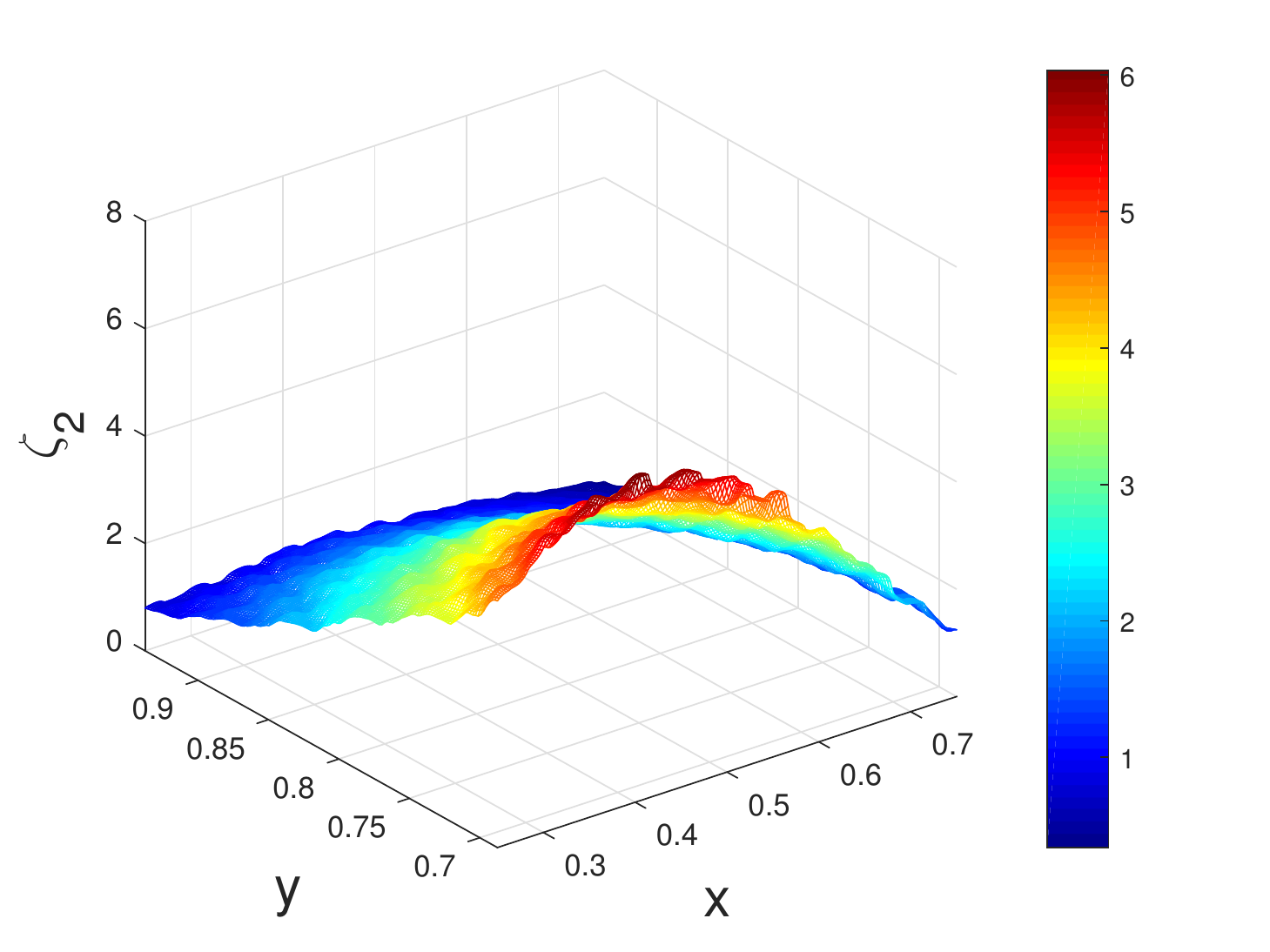}\\
		\includegraphics[width=1.0\linewidth]{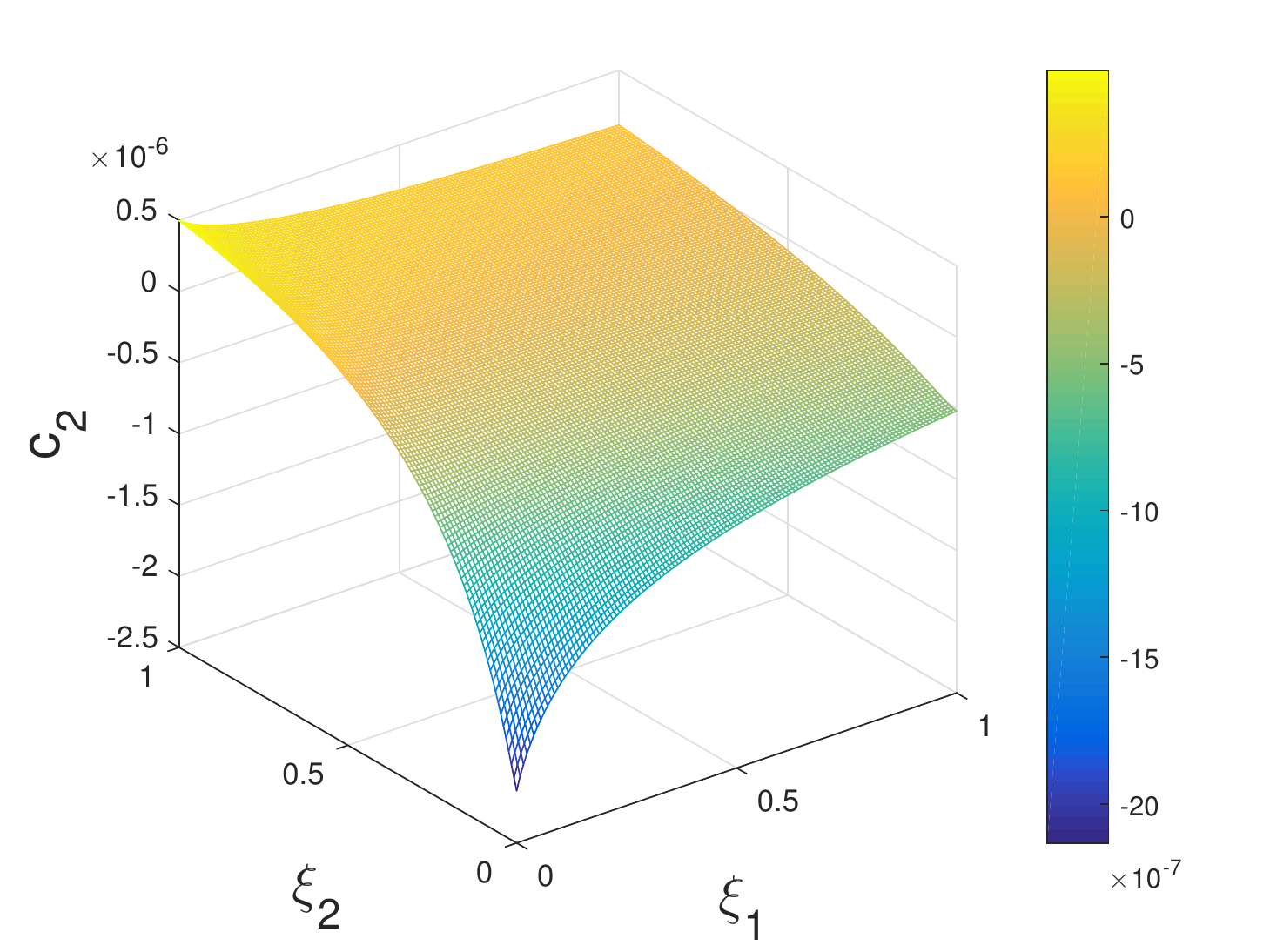}\\
	\end{subfigure}
%	\begin{subfigure}[b]{0.24\textwidth}
%		\includegraphics[width=1.0\linewidth]{figs/ex1_local_basis_zeta3.eps}\\
%		\includegraphics[width=1.0\linewidth]{figs/ex1_local_mapping_c3.eps}\\
%	\end{subfigure}
%	\begin{subfigure}[b]{0.24\textwidth}
%		\includegraphics[width=1.0\linewidth]{figs/ex1_local_basis_zeta4.eps}\\
%		\includegraphics[width=1.0\linewidth]{figs/ex1_local_mapping_c4.eps}\\
%	\end{subfigure}
	\caption{Plots of data-driven basis $\phi_{1}$ and $\phi_{2}$ and  mappings $c_1(\xi_1,\xi_2;\xi_3,\xi_4,\xi_5)$ and $c_2(\xi_1,\xi_2;\xi_3,\xi_4,\xi_5)$ with fixed $[\xi_3,\xi_4,\xi_5]^T=[0.25, 0.5, 0.75]^T$.}
	\label{fig:Example1localbasismapping}
\end{figure} 

Once we get the mapping $\textbf{F}$, the solution corresponding to a new realization $a(x,\vxi(\omega))$ can be constructed easily by finding $ \textbf{c}(\vxi)$ and plugging in the approximation \eqref{RB_expansion}. In Figure \ref{fig:Example1locall2err}, we show the mean relative $L^2$ and $H^1$ errors of the testing error and projection error. The testing error is the error between the numerical solution obtained by our mapping method and the reference solution obtained by the FEM on the same fine mesh used to compute the sample solutions. The projection error is the error between the FEM solution and its projection on the space spanned by data-driven basis, i.e. the best possible approximation error. For the experiment, only four data-driven basis are needed to achieve a relative error less than $1\%$ in $L^2$ norm and less than $2\%$ in $H^1$ norm. Moreover, the numerical solution obtained by our mapping method is close to the projection solution, which is the best approximation of the reference solution by the data-driven basis. This is due to the smoothness of the mapping. Notice that the computational time of the mapping method is almost negligible. In practice, when the number of basis is 10, it takes about $0.0022s$ to get a new solution by the mapping method, whereas the standard FEM takes $0.73s$.    
\begin{figure}[tbph]
	\centering
	\begin{subfigure}[b]{0.45\textwidth}
	\includegraphics[width=1.0\linewidth]{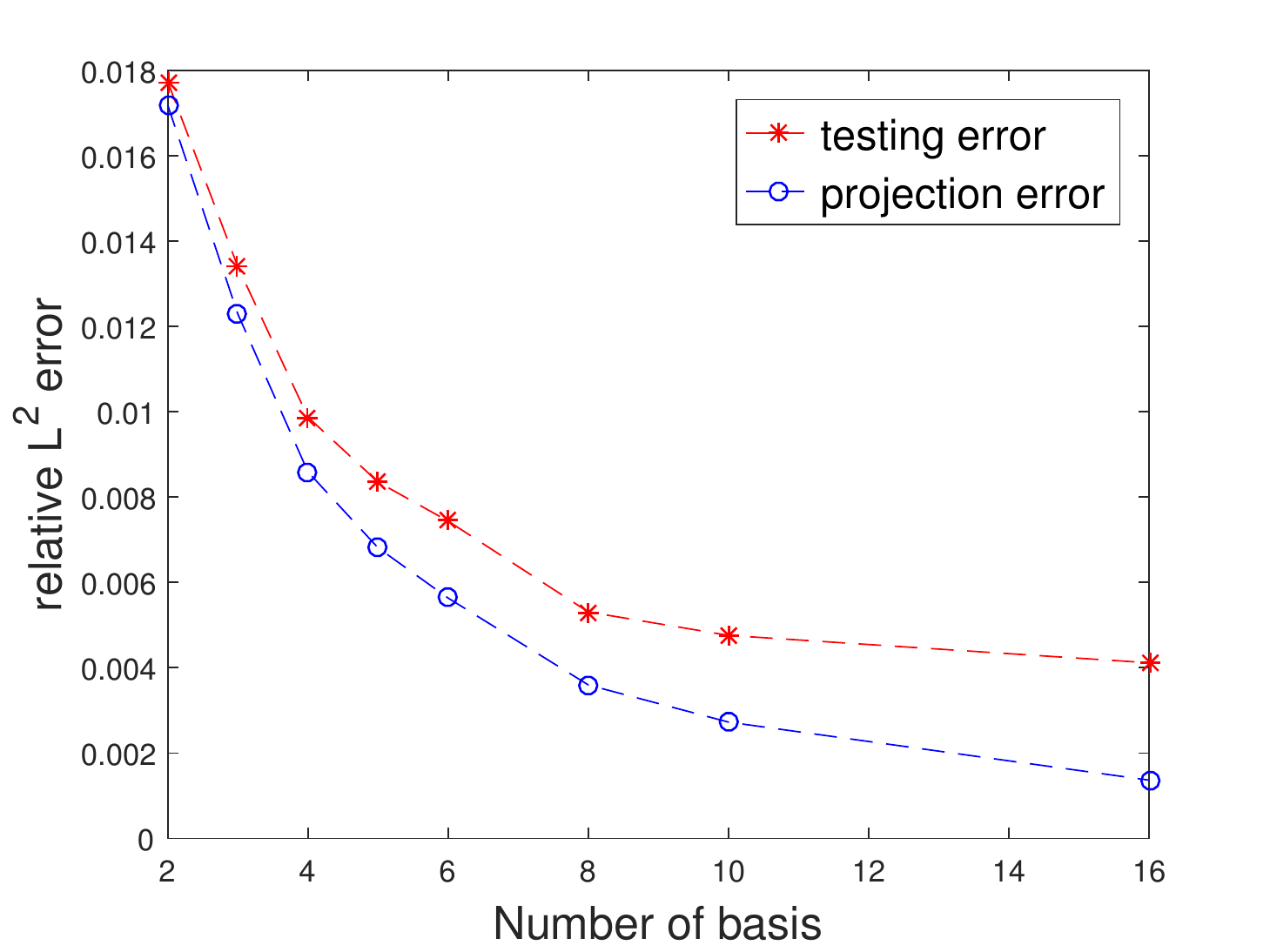}
	\end{subfigure}
	\begin{subfigure}[b]{0.45\textwidth}
	\includegraphics[width=1.0\linewidth]{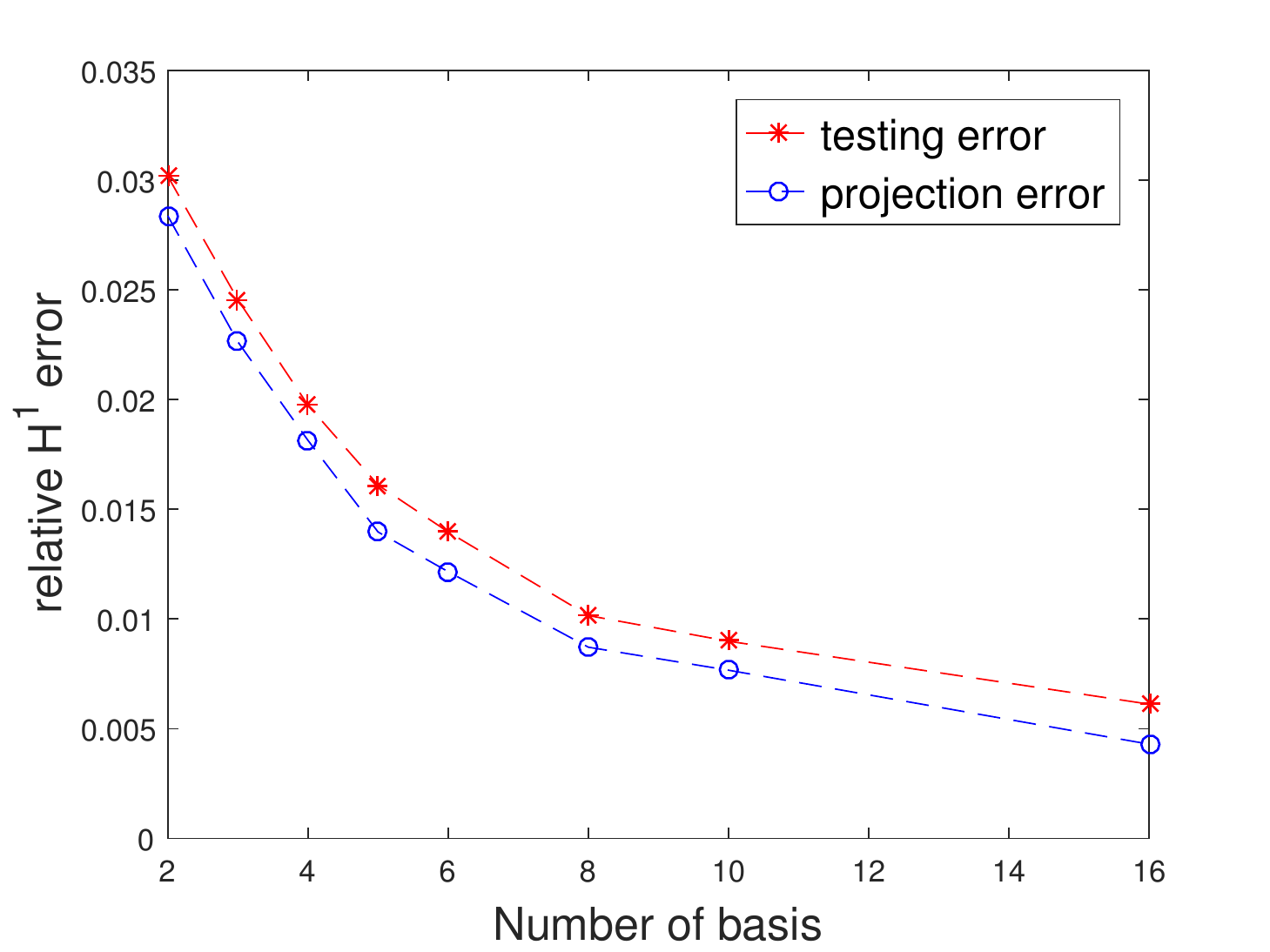}
	\end{subfigure}
	\caption{ Relative $L^2$ and $H^1$ error with increasing number of basis for the local problem of Sec.\ref{sec:Example1}.}
	\label{fig:Example1locall2err}
\end{figure} 

%Figure \ref{fig:Example1locall2err} shows relative $L^2$ error of the approximate solution mapping and the corresponding truncation error. The truncation error is the error between fine mesh FEM solution and its projection on the space spanned by data-driven basis, i.e. the best approximation error. One can find that the spline interpolation on a uniform grid can well approximate the mapping $\textbf{F}$, especially when the number of basis is small, i.e. the dominant part of error is truncation error. With the number of basis increasing, the truncation error decreases and the interpolation error increases slightly because more components need to be interpolated. As a result, the interpolation error becomes dominant. Therefore in order to reach less error, the grid for interpolation should be finer but that will be computationally unaffordable. It's also unaffordable when the number of random variables is large, because the the number of grid points increases exponentially with respect to dimension of stochastic space. In practice, the errors are computed by mean of $N_2=200$ random testing samples.
In Figure \ref{fig:Example1localdiffN}, we show the accuracy of the proposed method when we use different number of samples $N$ in constructing the data-driven basis.   Although the numerical error decreases when the sampling number $N$ is increased in general, the difference is very mild. 
%If very few samples are used for data driven basis extraction (e.g. $N=16,8$), the projection accuracy will become worse. As a consequence, the testing errors become larger. On the other hand, our choice $N=2000$ for numerical experiments is fully enough.
\begin{figure}[tbph]
	\begin{subfigure}[b]{0.49\textwidth}
	\centering
	\includegraphics[width=0.8\linewidth]{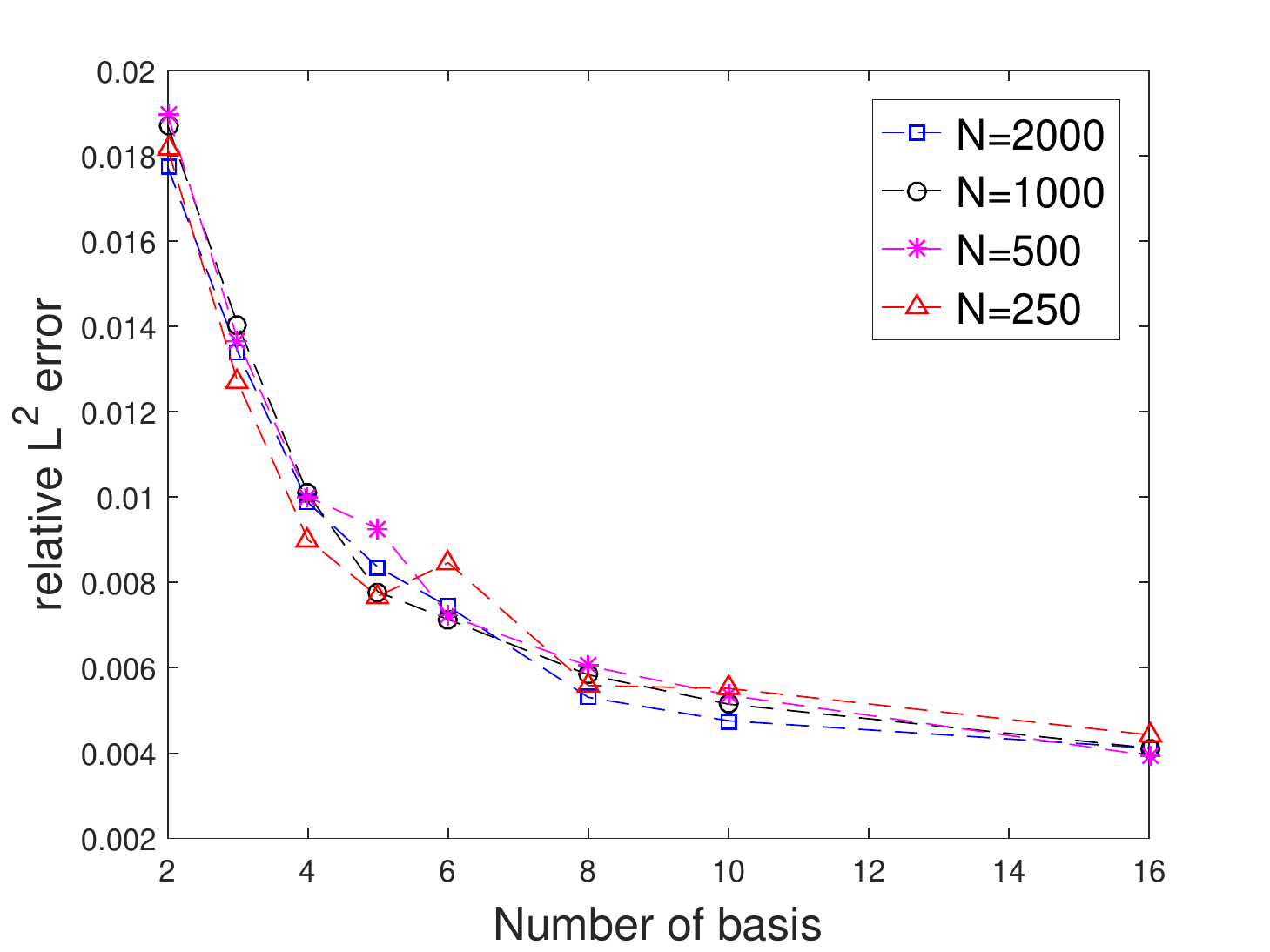}
	\caption{Testing errors in $L^2$ norm.}
	\end{subfigure}
	\begin{subfigure}[b]{0.49\textwidth}
	\centering
	\includegraphics[width=0.8\linewidth]{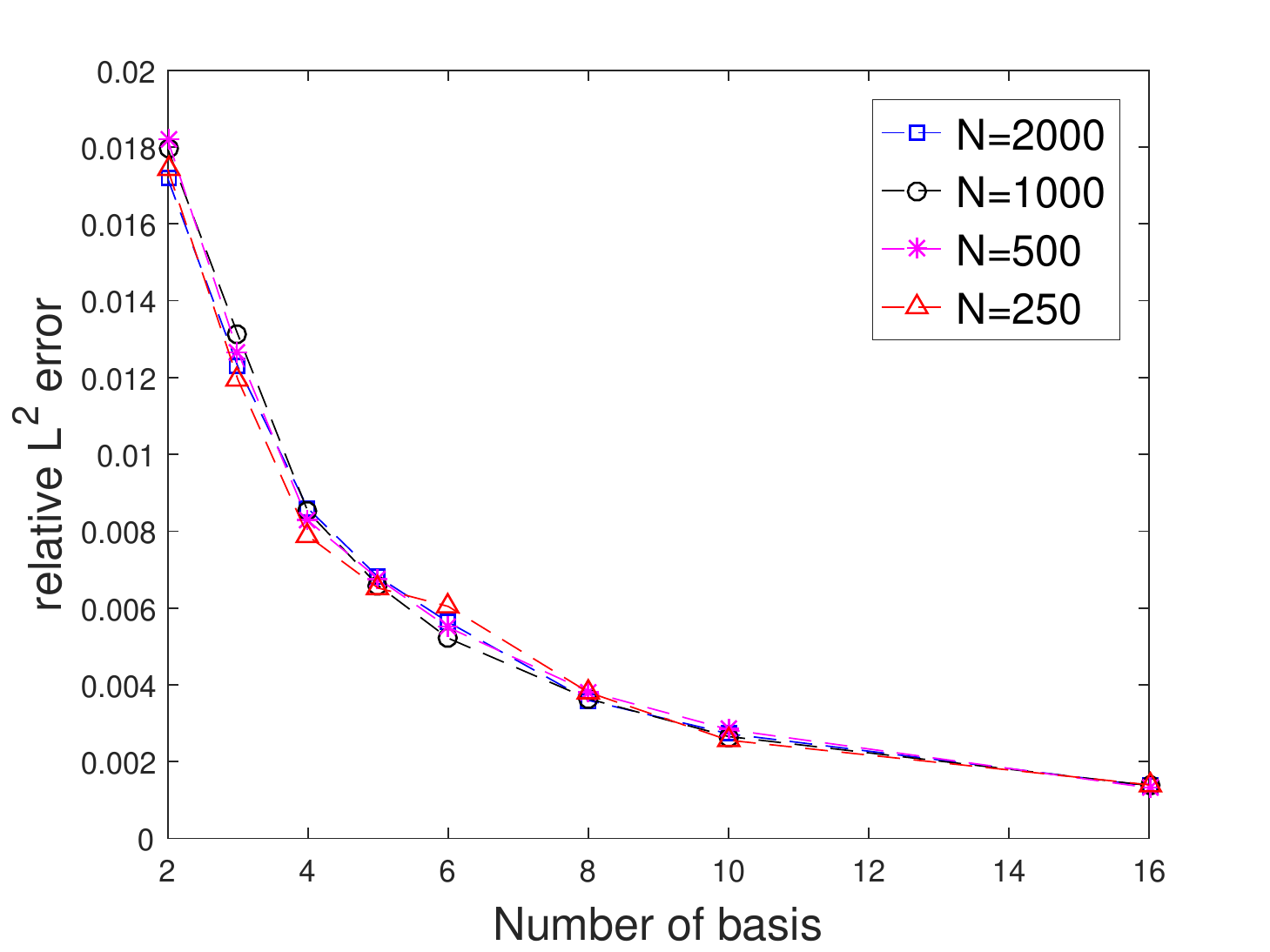}
	\caption{Projection errors in $L^2$ norm.}
	\end{subfigure}
	\begin{subfigure}[b]{0.49\textwidth}
	\centering
	\includegraphics[width=0.8\linewidth]{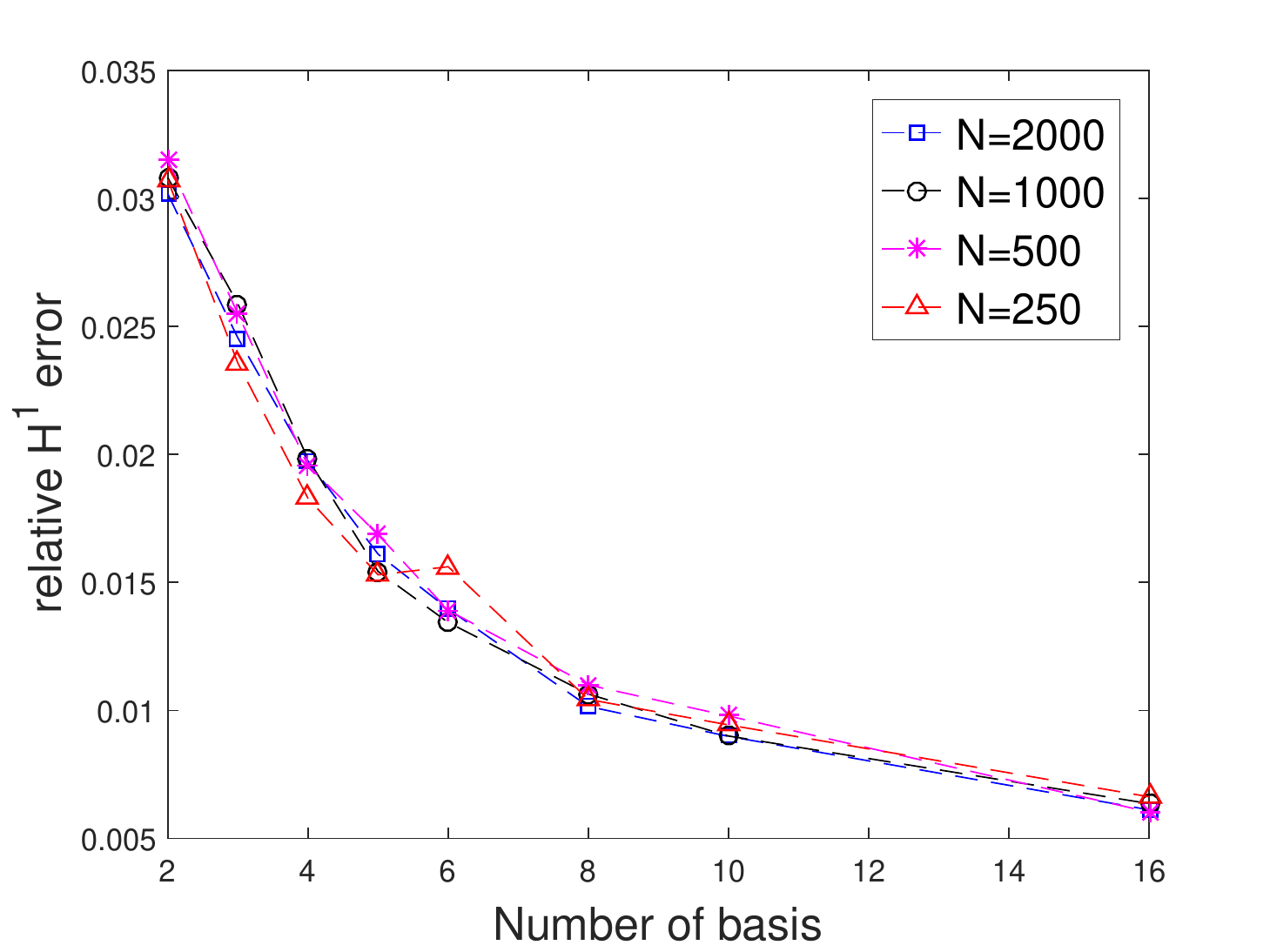}
	\caption{Testing errors in $H^1$ norm.}
	\end{subfigure}
	\begin{subfigure}[b]{0.49\textwidth}
	\centering
	\includegraphics[width=0.8\linewidth]{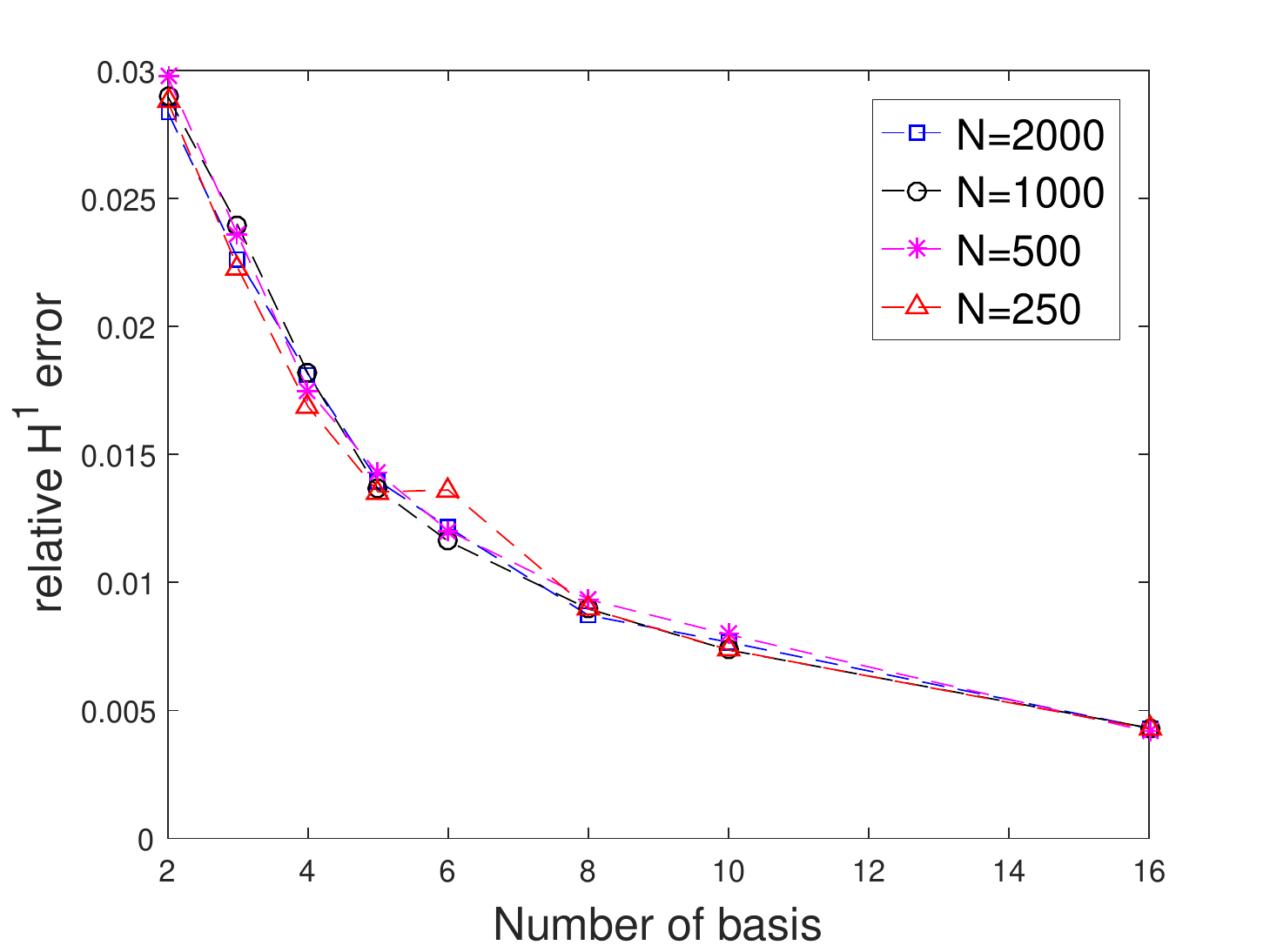}
	\caption{Projection errors in  $H^1$ norm.}
	\end{subfigure}
	\caption{ The relative testing/projection errors in $L^2$ and $H^1$ norms with different number of samples (i.e. $N$) for the local problem of Sec.\ref{sec:Example1}.}
	\label{fig:Example1localdiffN}
\end{figure} 

Next, we test our method on the whole computation domain  for \eqref{randommultiscaleelliptic} with coefficient \eqref{coefficientofexample1}. Figure \ref{fig:Example1globaleigenvalues} shows the decay property of eigenvalues. Similarly, we show magnitudes of the leading eigenvalues in Figure \ref{fig:Example1globaleigenvalues3a} and the ratio of the accumulated sum of the eigenvalues over the total sum in Figure \ref{fig:Example1globaleigenvalues3b}. We observe similar behaviors as before. Since we approximate the solution in the whole computational domain, we take the Galerkin approach described in Section \ref{sec:GlobalProblem} using the data-driven basis. 
In Figure \ref{fig:Example1globall2engerr}, we show the mean relative error between our numerical solution and the reference solution in $L^2$ norm and $H^1$ norm, respectively. In practice, when the number of basis is 15, it takes about $0.084s$ to compute a new solution by our method, whereas the standard FEM method costs about $0.82s$ for one solution. 

\begin{figure}[tbph] 
	\centering
	\begin{subfigure}[b]{0.45\textwidth}
		\includegraphics[width=1.0\linewidth]{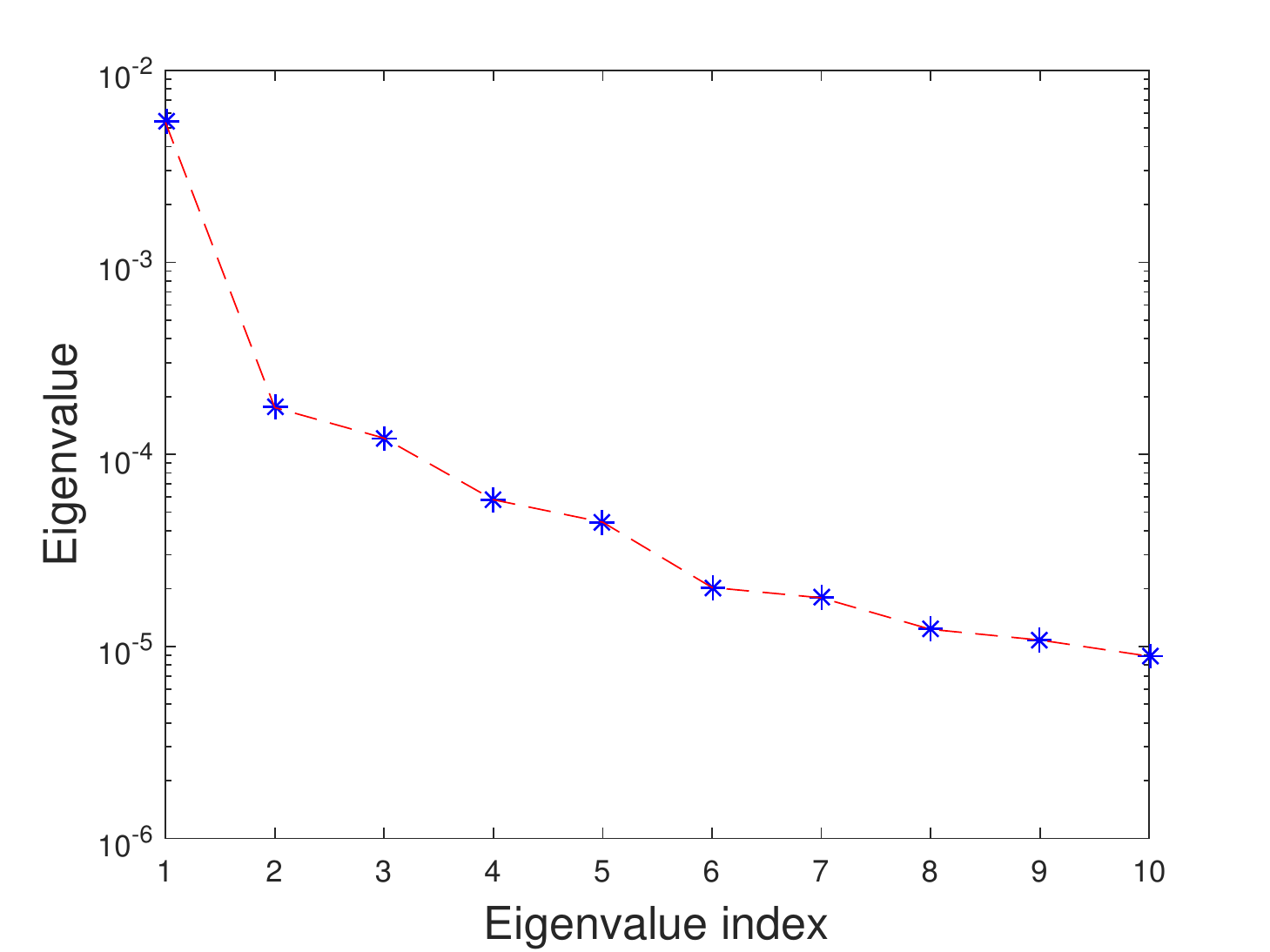} 
		\caption{ Decay of the eigenvalues.}
		\label{fig:Example1globaleigenvalues3a}
	\end{subfigure}
	\begin{subfigure}[b]{0.45\textwidth}
		\includegraphics[width=1.0\linewidth]{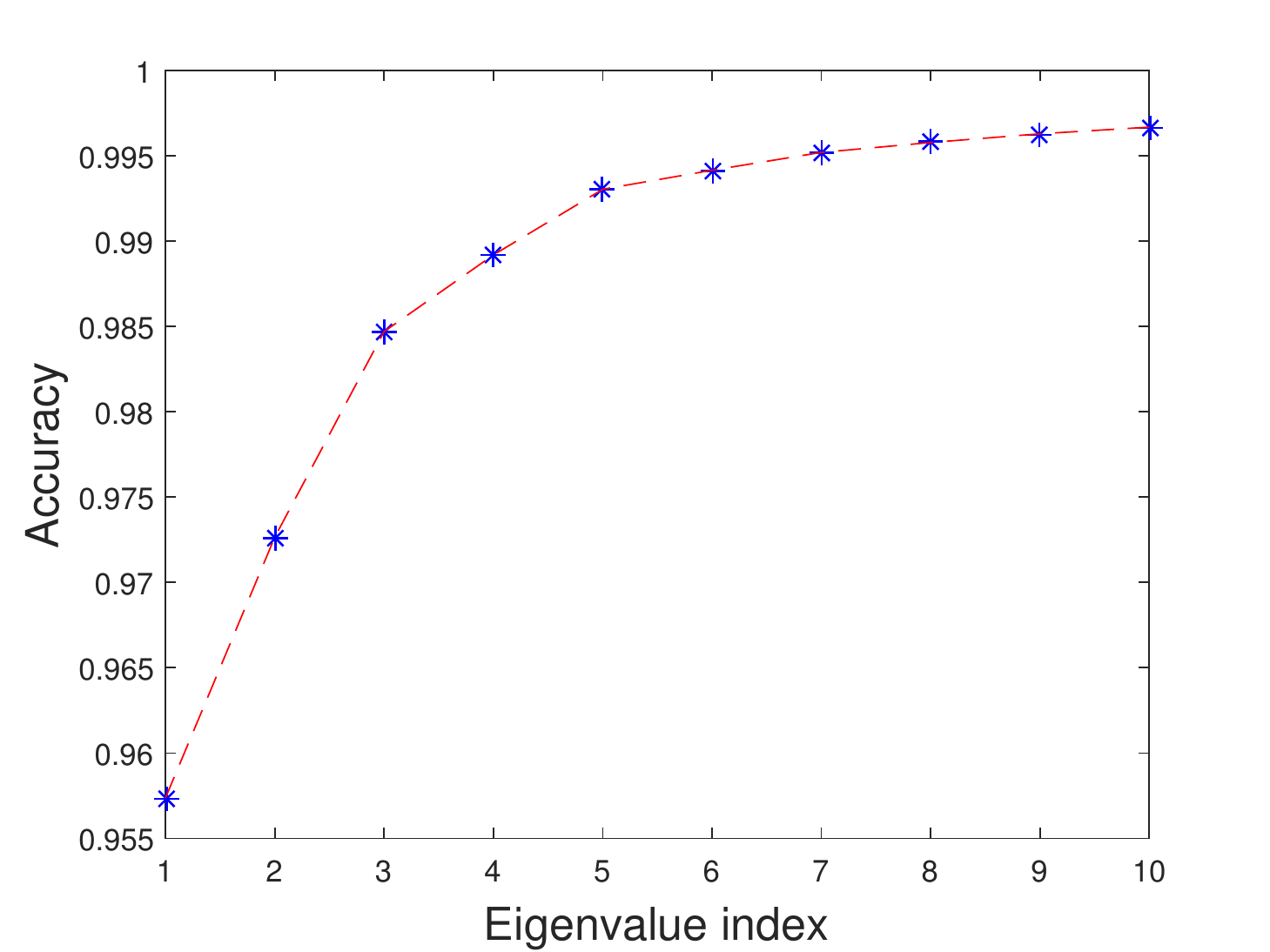} 
		\caption{ $1-\sqrt{\sum_{j=n+1}^{N}\lambda_{j}/\sum_{j=1}^{N}\lambda_{j}}$, $n=1,2,...$.} 
		\label{fig:Example1globaleigenvalues3b}
	\end{subfigure}
	\caption{The decay properties of the eigenvalues for the global problem of Sec.\ref{sec:Example1}.}
	\label{fig:Example1globaleigenvalues}
\end{figure}

\begin{figure}[tbph] 
	\centering	
	\begin{subfigure}[b]{0.45\textwidth}
		\includegraphics[width=1.0\linewidth]{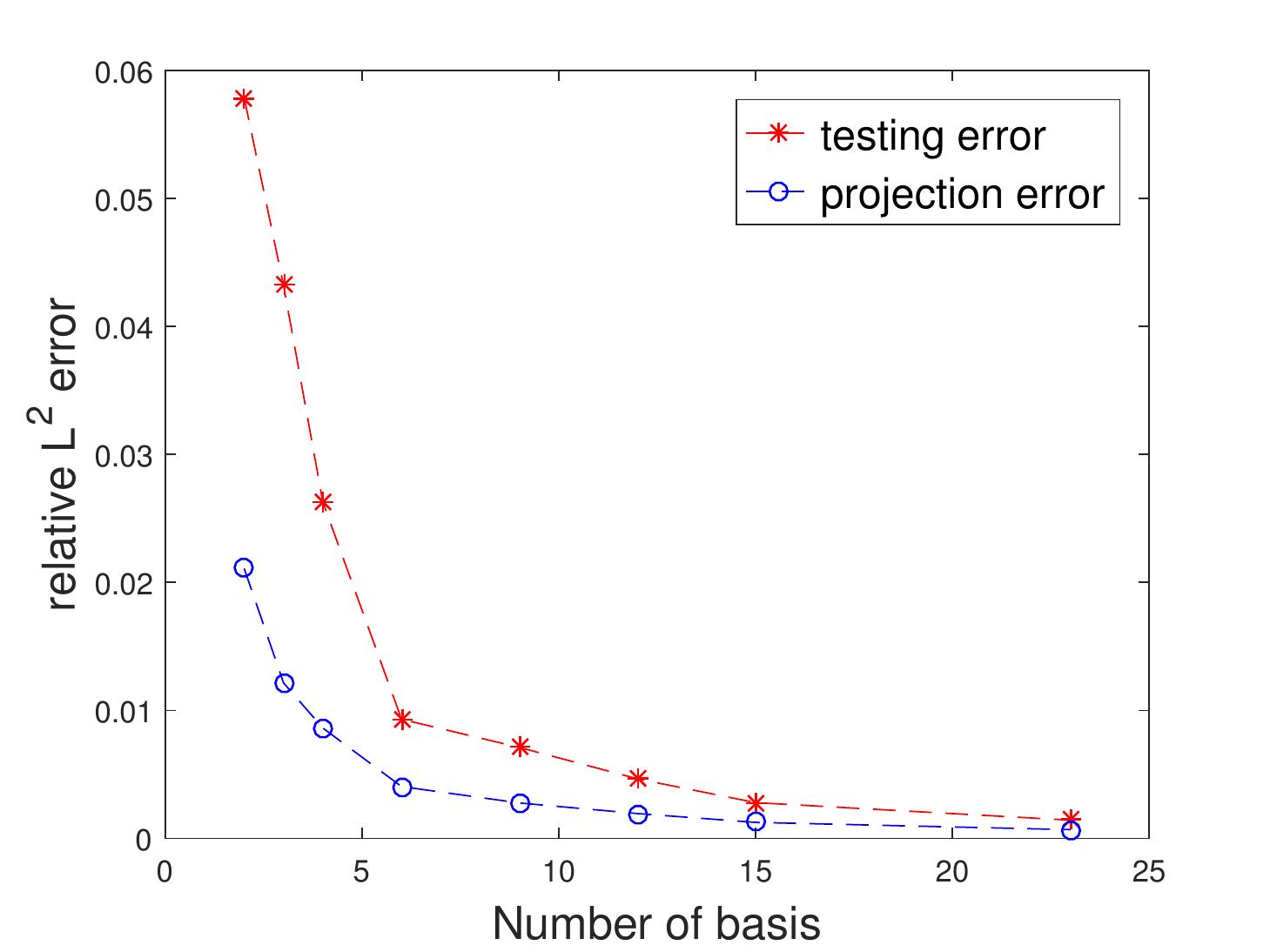} 
		\caption{ Relative error in $L^2$ norm.}
		\label{fig:Example1globall2err}
	\end{subfigure}
	\begin{subfigure}[b]{0.45\textwidth}
		\includegraphics[width=1.0\linewidth]{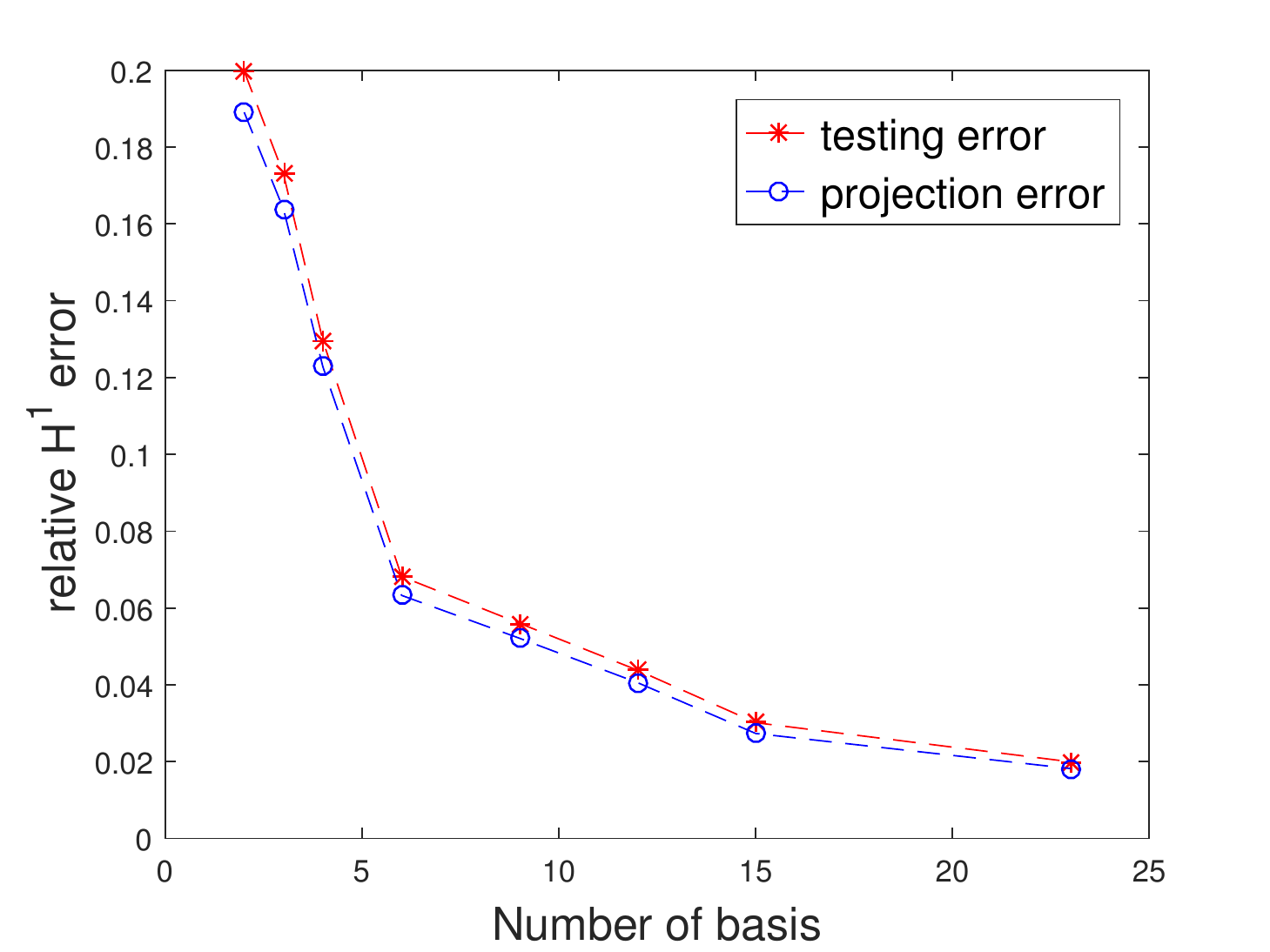} 
		\caption{ Relative error in $H^1$ norm.} 
		\label{fig:Example1globalengerr}
	\end{subfigure}
	\caption{The relative errors with increasing number of basis for the global problem of Sec.\ref{sec:Example1}.}
	\label{fig:Example1globall2engerr}
\end{figure}

\subsection{An example with an exponential type coefficient}\label{sec:Example2}
\noindent
We now solve the problem \eqref{randommultiscaleelliptic} with an exponential type coefficient. 
The coefficient is parameterized by eight random variables, which has the following form 
\begin{align}
a(x,y,\omega) =&\exp\Big( \sum_{i=1}^8 \sin(\frac{2\pi (9-i)x}{9\epsilon_i})\cos(\frac{2\pi iy}{9\epsilon_i})\xi_i(\omega) \Big),
\label{coefficientofexample2}
\end{align}
where the multiscale parameters $[\epsilon_1,\epsilon_2,\cdots,\epsilon_{8}] =[\frac{1}{43},\frac{1}{41},\frac{1}{47},\frac{1}{29},\frac{1}{37},\frac{1}{31},\frac{1}{53},\frac{1}{35}]$ and $\xi_i(\omega)$, $i=1,...,8$ are i.i.d. uniform random variables in $[-\frac{1}{2},\frac{1}{2}]$. Hence the contrast ratio is $\kappa_a\approx 3.0\times 10^3$ in the coefficient \eqref{coefficientofexample2}. The force function is $f(x,y) = \cos(2\pi x)\sin(2\pi y)\cdot I_{D_2}(x,y)$, where $I_{D_2}$ is an indicator function defined on $D_2=[\frac{1}{4},\frac{3}{4}]\times[\frac{1}{16},\frac{5}{16}]$. 
In the local problem, the subdomain of interest is $D_1=[\frac{1}{4},\frac{3}{4}]\times[\frac{11}{16},\frac{15}{16}]$. 

In Figure \ref{fig:Example2eigenvalues}, we show the decay property of eigenvalues. Specifically, in Figure \ref{fig:Example2eigenvalues-a} we show the magnitude of leading eigenvalues and in Figure \ref{fig:Example2eigenvalues-b} we show the ratio of the accumulated sum of the eigenvalues over the total sum. These results imply that the solution space has a low-dimensional structure, which can be approximated by the data-driven basis functions. 

\begin{figure}[tbph] 
	\centering
	\begin{subfigure}[b]{0.45\textwidth}
		\includegraphics[width=1.0\linewidth]{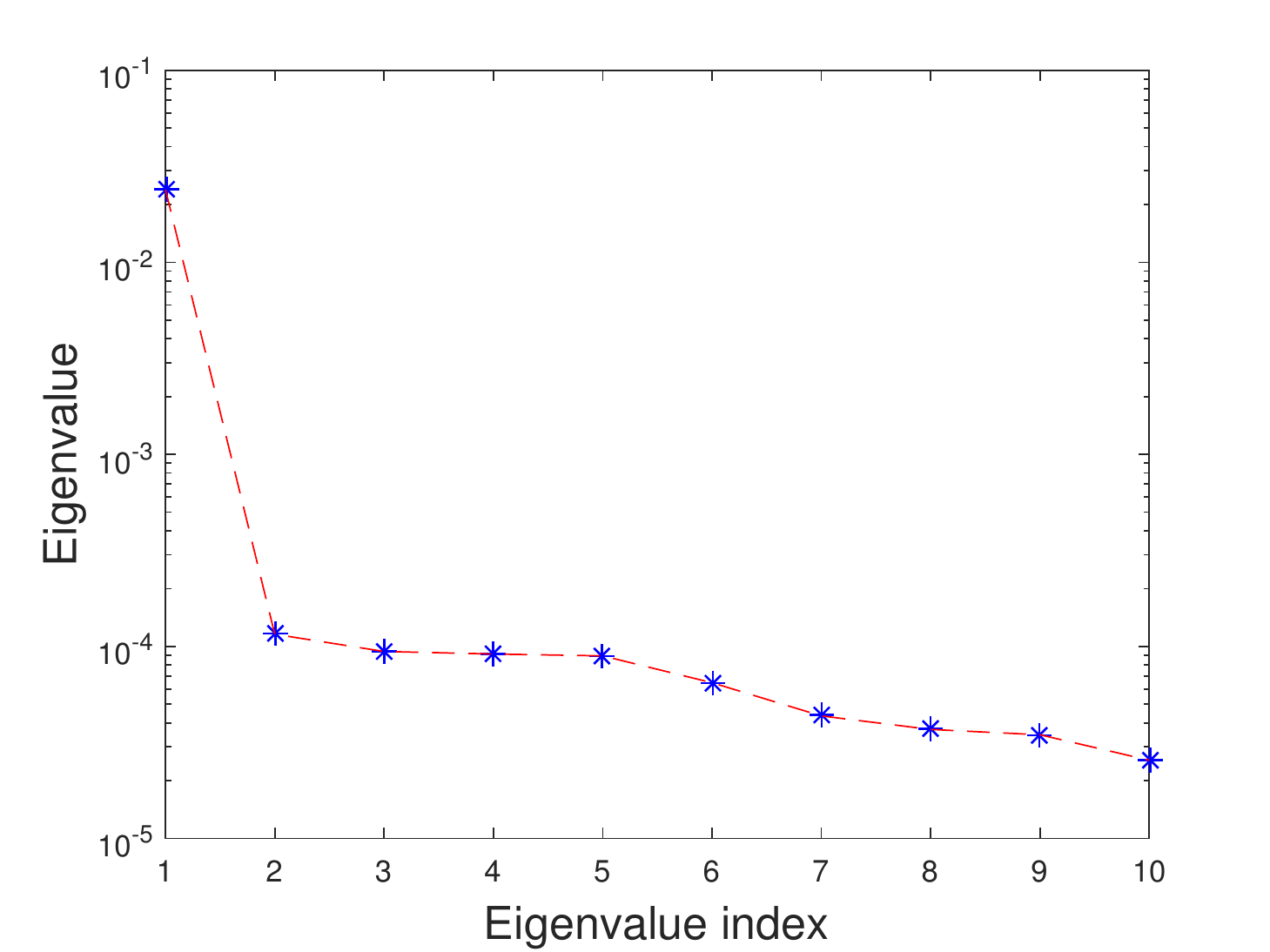} 
		\caption{ Decay of eigenvalues.}
		\label{fig:Example2eigenvalues-a}
	\end{subfigure}
	\begin{subfigure}[b]{0.45\textwidth}
		\includegraphics[width=1.0\linewidth]{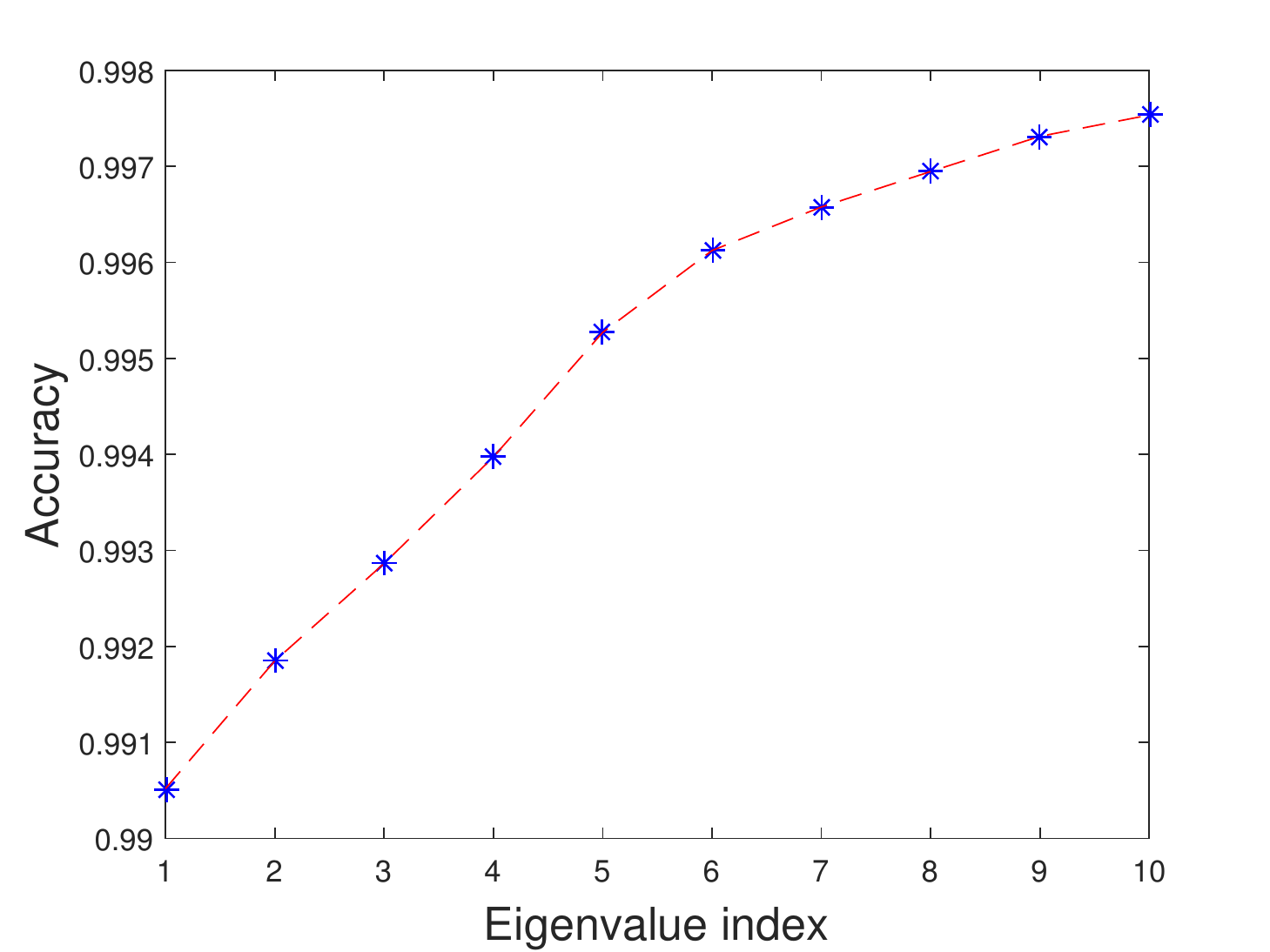} 
		\caption{ $1-\sqrt{\sum_{j=n+1}^{N}\lambda_{j}/\sum_{j=1}^{N}\lambda_{j}}$, $n=1,2,...$.} 
		\label{fig:Example2eigenvalues-b}
	\end{subfigure}
	\caption{The decay properties of the eigenvalues in the problem of Sec.\ref{sec:Example2}.}
	\label{fig:Example2eigenvalues}
\end{figure} 
Since the coefficient $a(x,y,\omega)$ is parameterized by eight random variables, it is expensive to construct the mapping $\textbf{F}:\vxi(\omega)\mapsto \textbf{c}(\omega)$ using the interpolation method with uniform grids. Instead, we use a sparse grid polynomial interpolation approach to approximate the mapping $\textbf{F}$. Specifically, we use Legendre polynomials with total order less than or equal 4 to approximate the mapping, where the total number of nodes is $N_1=2177$; see \cite{Griebel:04}. %which achieves accuracy level 5 of nested rule.

Figure \ref{fig:Example2errors-a} shows the relative errors of the testing error and projection error in $L^2$ norm. Figure \ref{fig:Example2errors-b} shows the corresponding relative errors in $H^1$ norm. The sparse grid polynomial interpolation approach gives a comparable error as the best approximation error. We observe similar convergence results in solving the global problem \eqref{randommultiscaleelliptic} with the coefficient \eqref{coefficientofexample2} (not shown here). Therefore, we can use sparse grid method to construct mappings for problems of moderate number of random variables. 

\begin{figure}[htbp]
	\centering
	\begin{subfigure}[b]{0.45\textwidth}
		\includegraphics[width=1.0\linewidth]{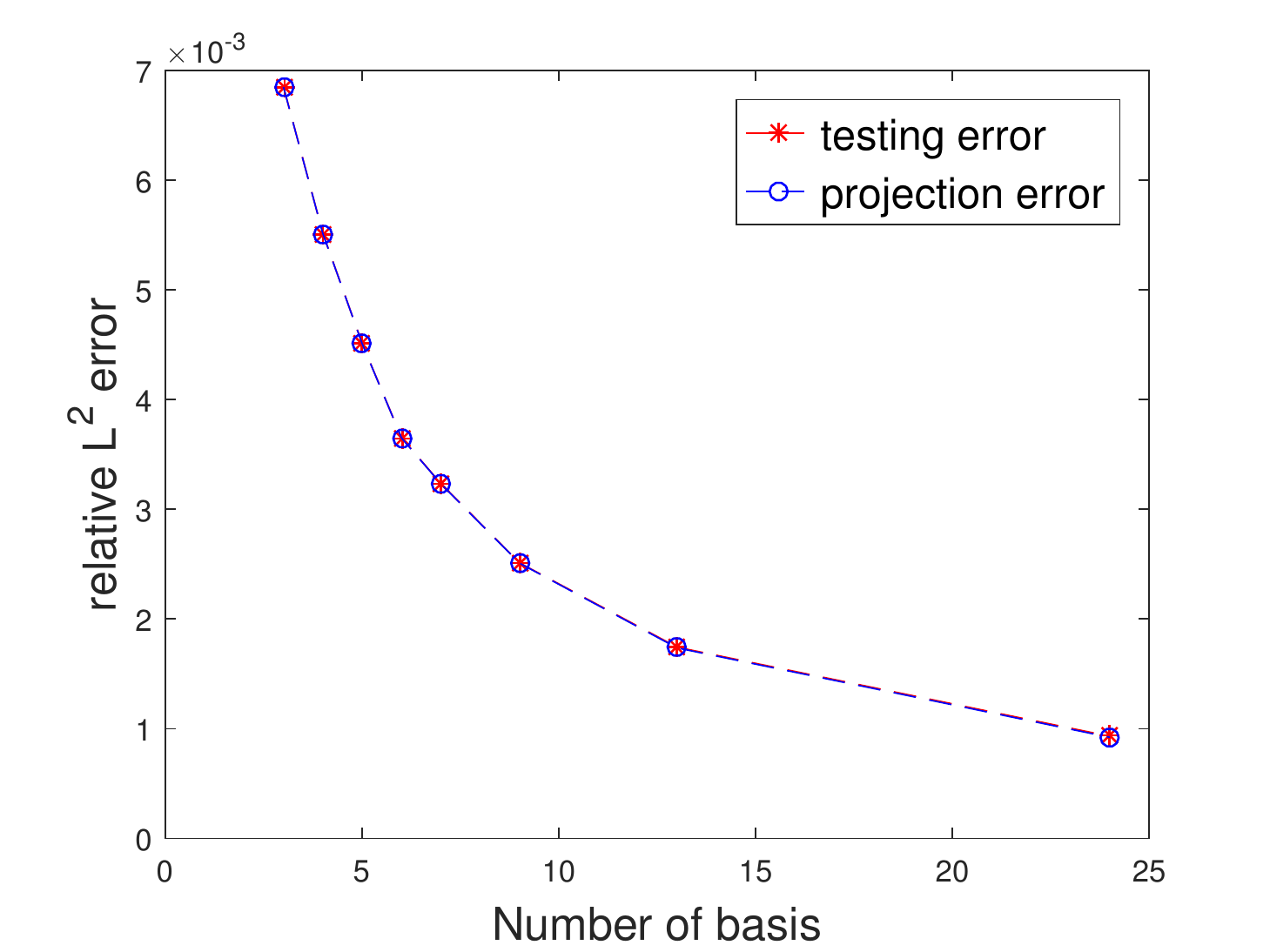} 
		\caption{ Relative error in $L^2$ norm.}
		\label{fig:Example2errors-a}
	\end{subfigure}
	\begin{subfigure}[b]{0.45\textwidth}
		\includegraphics[width=1.0\linewidth]{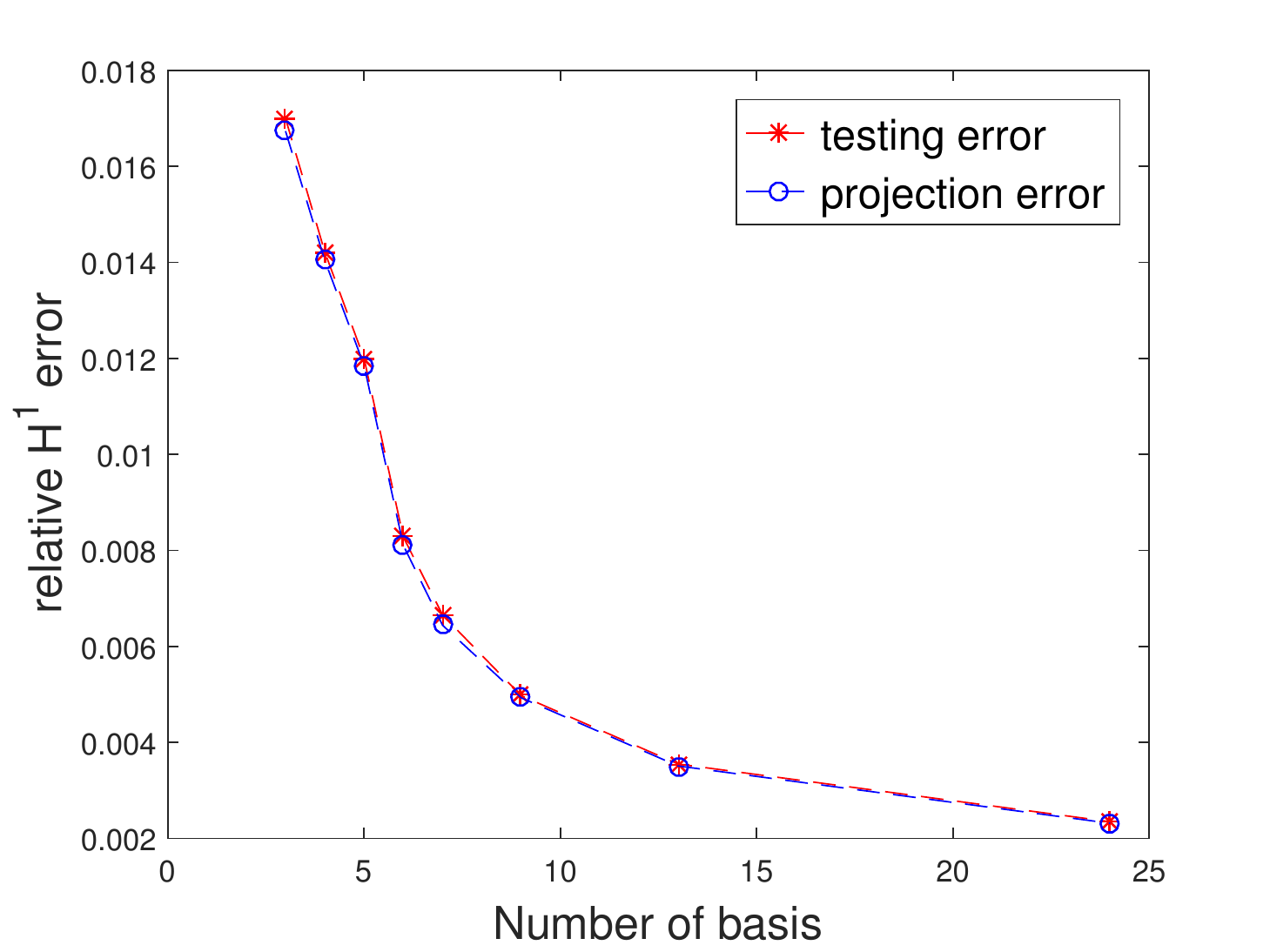} 
		\caption{ Relative error in $H^1$ norm.} 
		\label{fig:Example2errors-b}
	\end{subfigure}
	\caption{ The relative errors with increasing number of basis in the problem of Sec.\ref{sec:Example2}.}
	\label{fig:Example2errors}
\end{figure} 

\subsection{An example with a discontinuous coefficient}\label{sec:ExampleInterface}
\noindent 
We solve the problem \eqref{randommultiscaleelliptic} with a discontinuous coefficient, 
which is an interface problem.  The coefficient is parameterized by twelve random variables and has the following form 
	\begin{align}
a(x,y,\omega) =& \exp\Big(\sum_{i=1}^{6} \sin(2\pi \frac{x\sin(\frac{i\pi}{6}) +y\cos(\frac{i\pi}{6})   }{\epsilon_i}   )\xi_i(\omega) \Big)\cdot I_{D\setminus D_3}(x,y)\nonumber\\
&+\exp\Big(\sum_{i=1}^{6} \sin(2\pi \frac{x\sin(\frac{(i+0.5)\pi}{6}) +y\cos(\frac{(i+0.5)\pi}{6})   }{\epsilon_{i+6}}   )\xi_{i+6}(\omega) \Big)\cdot I_{D_3}(x,y),
\label{coefficientofexampleInterface}
\end{align}
where $\epsilon_i=\frac{1+i}{100}$ for $i=1,\cdots,6$, $\epsilon_{i}=\frac{i+13}{100}$ for $i=7,\cdots,12$,  $\xi_i(\omega)$, $i=1,\cdots,12$ are i.i.d. uniform random variables in $[-\frac{2}{3},\frac{2}{3}]$, and $I_{D_3}$ and $I_{D\setminus D_3}$ are indicator functions. 
The subdomain $D_3$ consists of three small rectangles whose edges are parallel to the edges of domain $D$ with width $10h$ and height $0.8$. And the lower left vertices are located 
at $(0.3,0.1),(0.5,0.1),(0.7,0.1)$ respectively. The contrast ratio in the coefficient \eqref{coefficientofexampleInterface} is $\kappa_a\approx 3\times 10^3$. In Figure \ref{fig:ExampleInterfaceRealizations} we show two realizations of the coefficient \eqref{coefficientofexampleInterface}. 	

	\begin{figure}[htbp]
		\centering
		\includegraphics[width=0.49\linewidth]{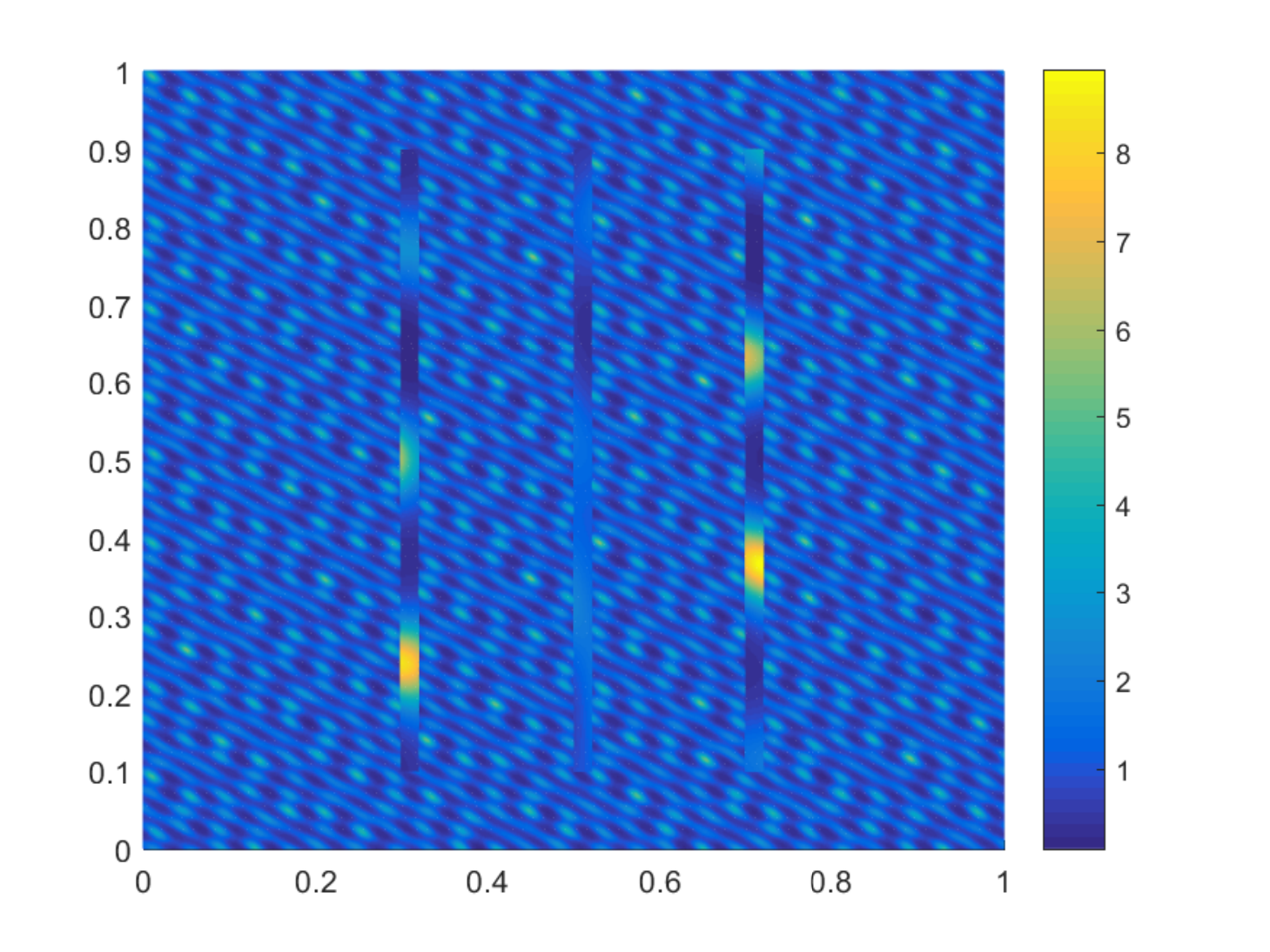}
		\includegraphics[width=0.49\linewidth]{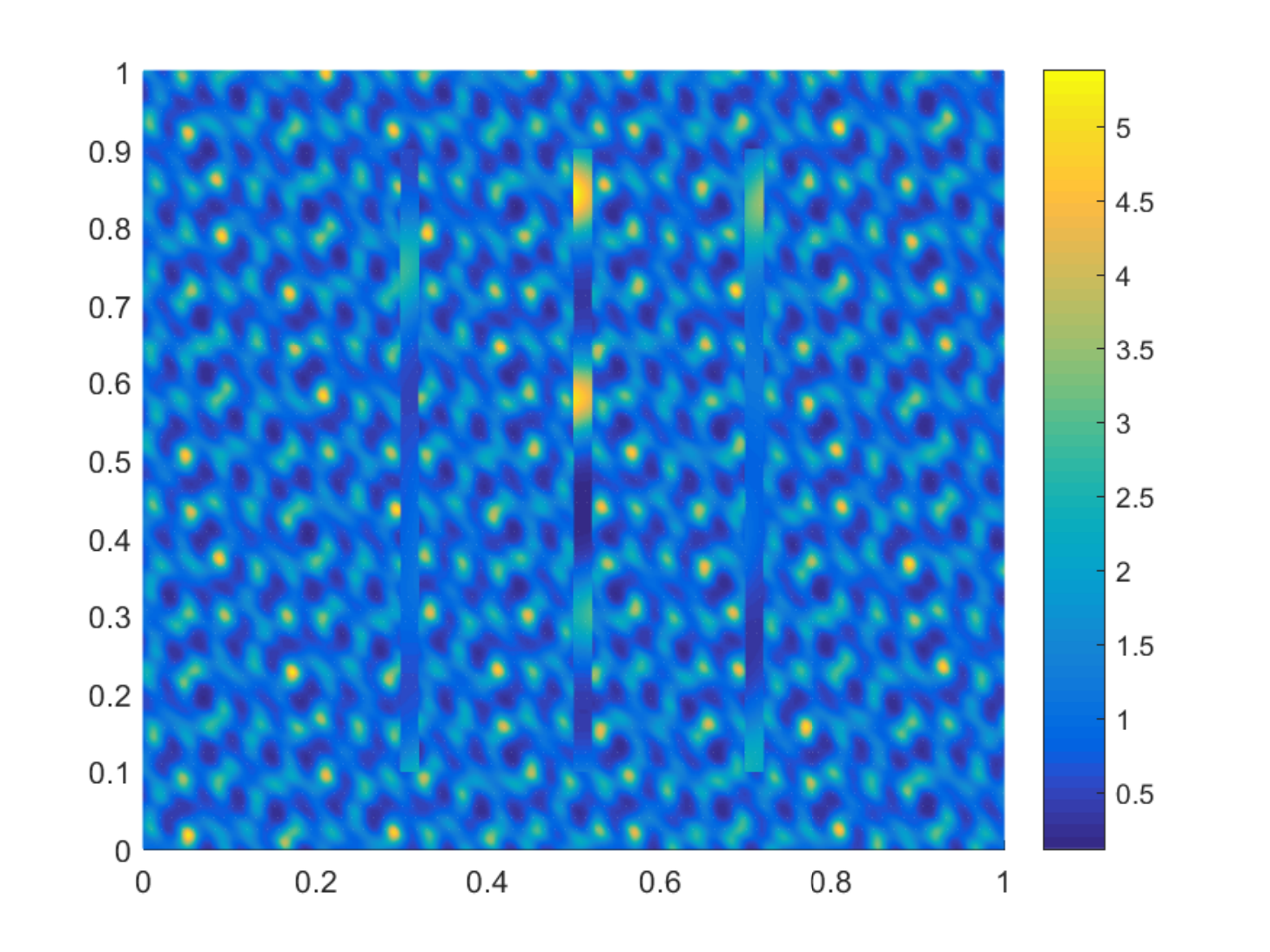} 
		\caption{Two realizations of the coefficient \eqref{coefficientofexampleInterface} in the interface problem.} 
		\label{fig:ExampleInterfaceRealizations}
	\end{figure}
 
	We now solve the local problem of \eqref{randommultiscaleelliptic} with the coefficient \eqref{coefficientofexampleInterface}, where the domain of interest is $D_1=[\frac{1}{4},\frac{3}{4}]\times[\frac{11}{16},\frac{15}{16}]$. The force function is $f(x,y) = \cos(2\pi x)\sin(2\pi y)\cdot I_{D_2}(x,y)$, where $D_2=[\frac{1}{4},\frac{3}{4}]\times[\frac{1}{16},\frac{5}{16}]$.  In Figure \ref{fig:ExampleInterfaceeigenvalues-a} and Figure \ref{fig:ExampleInterfaceeigenvalues-b} we show the magnitude of dominant eigenvalues and approximate accuracy. These results show that only a few data-driven basis functions are enough to approximate all solution samples well. 
	\begin{figure}[tbph] 
		\centering
		\begin{subfigure}[b]{0.45\textwidth}
			\includegraphics[width=1.0\linewidth]{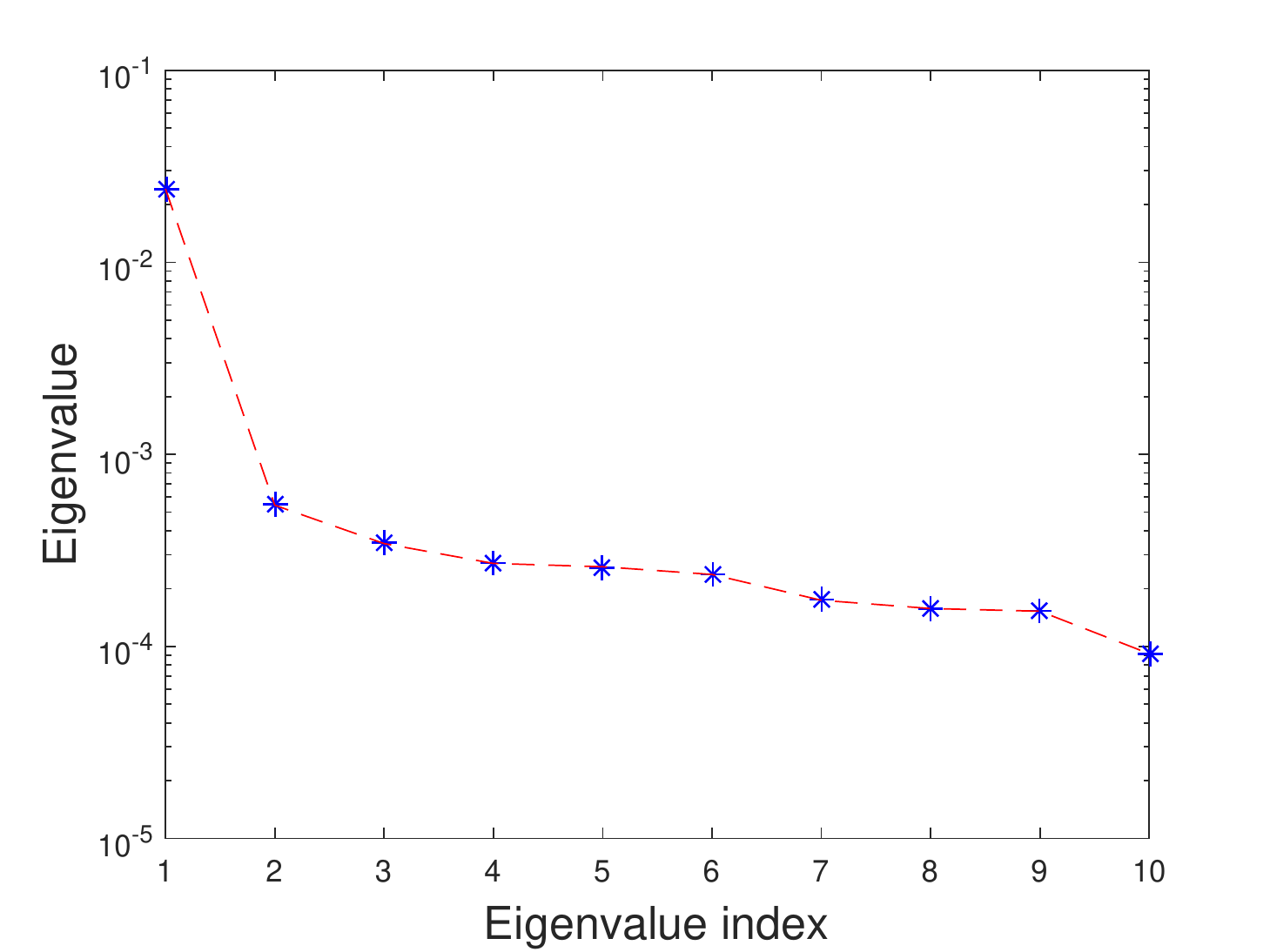} 
			\caption{ Decay of eigenvalues.}
			%\caption{ Decay of eigenvalues, \textcolor{red}{$\sqrt{\lambda_n}$, $n=1,2,...$}.}
			\label{fig:ExampleInterfaceeigenvalues-a}
		\end{subfigure}
		\begin{subfigure}[b]{0.45\textwidth}
			\includegraphics[width=1.0\linewidth]{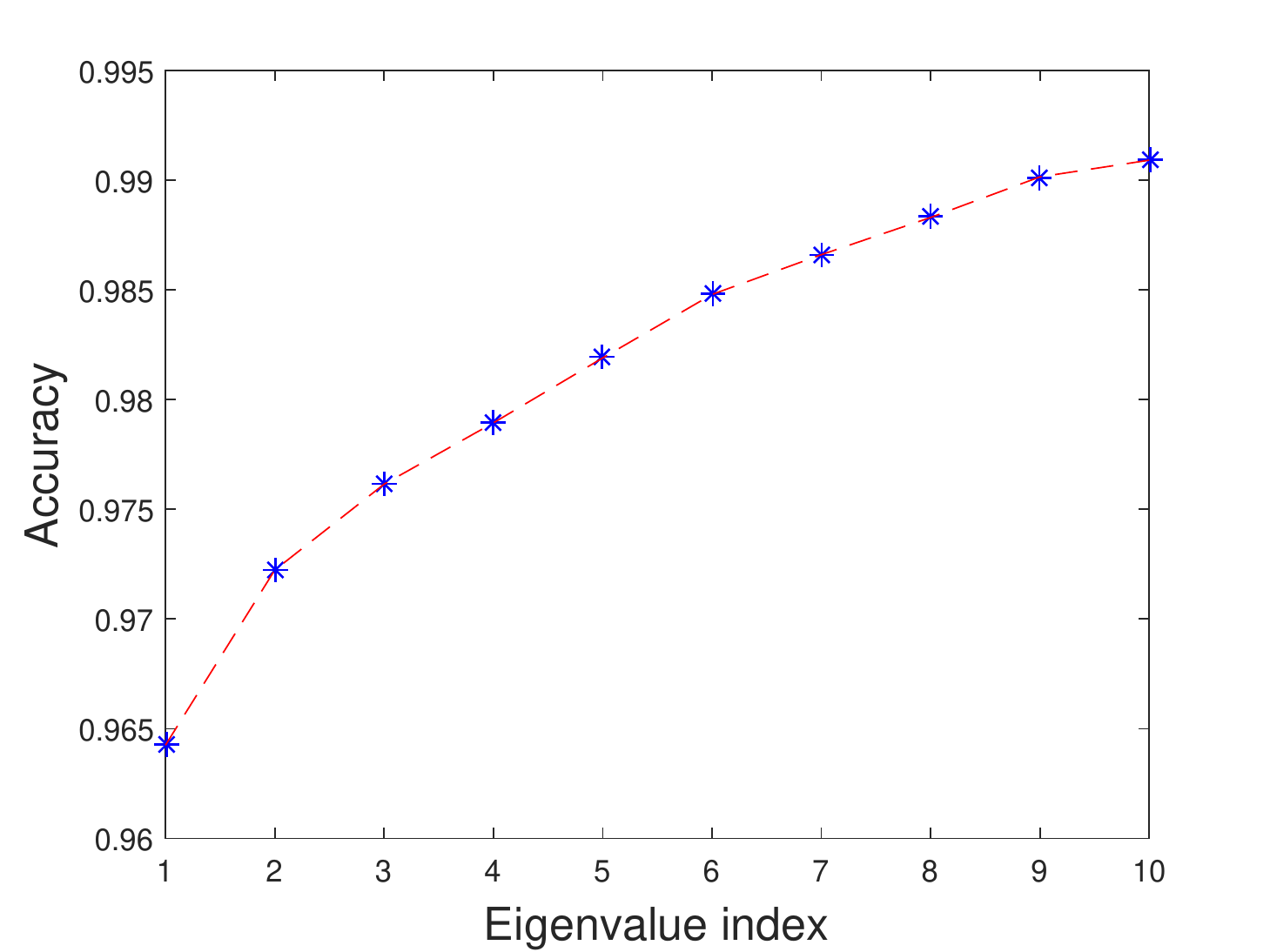} 
			\caption{$1-\sqrt{\sum_{j=n+1}^{N}\lambda_{j}/\sum_{j=1}^{N}\lambda_{j}}$, $n=1,2,...$.} 
			%\caption{\textcolor{red}{$1-\sqrt{\sum_{j=n+1}^{N}\lambda_{j}/\sum_{j=1}^{N}\lambda_{j}}$, $n=1,2,...$.}} 
			\label{fig:ExampleInterfaceeigenvalues-b}
		\end{subfigure}
		\caption{The decay properties of the eigenvalues in the problem of Sec.\ref{sec:ExampleInterface}.}
		\label{fig:ExampleInterfaceeigenvalues}
	\end{figure}  

    Since the coefficient \eqref{coefficientofexampleInterface} is parameterized by twelve random variables, constructing the mapping $\textbf{F}:\vxi(\omega)\mapsto \textbf{c}(\omega)$ using the sparse grid polynomial interpolation becomes very expensive too. Here we use the least square method combined with the $k-d$ tree algorithm for searching nearest neighbors to approximate the mapping $\textbf{F}$.

    In our method, we first generate $N_1=5000$ data pairs $\{(\vxi^n(\omega),\textbf{c}^n(\omega)\}_{n=1}^{N_1}$ that will be used as training data. 
    Then, we use $N_2=200$ samples for testing in the online stage. For each new testing data point $\vxi(\omega)=[\xi_1(\omega),\cdots,\xi_r(\omega)]^T$ (here $r=12$), we run the $k-d$ tree algorithm to find its $n$ nearest neighbors in the training data set and  apply the least square method to compute the corresponding mapped value $\vec{c}(\omega)=[c_1(\omega), \ldots, c_K(\omega)]^T$. The complexity of constructing a $k-d$ tree is $O(N_1\log N_1)$. Given the $k-d$ tree, for each testing point the complexity of finding its $n$ nearest neighbors is $O(n\log N_1)$ \cite{wald2006building}. Since the $n$ training data points are close to the testing data point $\vxi(\omega)$, for each training data $(\vxi^m(\omega),\textbf{c}^m(\omega)$, $m=1,....n$, we compute the first-order Taylor expansion of each component $c^m_j(\omega)$ at $\vxi(\omega)$ as   
    \begin{align}
    c^m_j(\omega)\approx c_j(\omega)+\sum_{i=1}^{r=12}(\xi^m_i-\xi_i)\frac{\partial c_j}{\partial \xi_i}(\omega),\quad j=1,2,\cdots,K, 
    \label{least-square-system}
    \end{align}
    where $\xi^m_i$, $i=1,...,r$, $c^m_j(\omega)$, $j=1,...,K$ are given training data, $c_j(\omega)$ and 
    $\frac{\partial c_j}{\partial \xi_i}(\omega)$, $j=1,...,K$ are unknowns associated with the testing data point $\vxi(\omega)$. In the $k-d$ tree algorithm, we choose $n=20$, which is slightly greater than $r+1=13$. By solving \eqref{least-square-system} using the least square method, we get the mapped value $\vec{c}(\omega)=[c_1(\omega), \ldots, c_K(\omega)]^T$. Finally, we use the formula \eqref{RB_expansion} to get the numerical solution of Eq.\eqref{randommultiscaleelliptic} with the coefficient \eqref{coefficientofexampleInterface}. 
    
    Because of the discontinuity and high-dimensional random variables in the coefficient \eqref{coefficientofexampleInterface}, the problem \eqref{randommultiscaleelliptic} is more challenging. The nearest neighbors based least square method provides an efficient way to construct mappings and achieves relative errors less than $3\%$ in both $L^2$ norm and $H^1$ norm;
    see Figure \ref{fig:ExampleInterfacelocalerrors}. Alternatively, one can use the neural network method to construct mappings for this type of challenging problems; see Section \ref{sec:Example3}.

%    and approximate some nearest neighbors $\{(\vxi^n(\omega),\textbf{c}^n(\omega)\}_{n=1}^{N_3}$ of a new testing data point $\vxi(\omega)$ by first-order Taylor expansion at $\vxi(\omega)$, i.e. $c^n_j\approx c_j(\vxi)+\sum_{i=1}^{r}(\xi^n_i-\xi_i)\frac{\partial c_j}{\partial \xi_i}(\vxi)$, $n=1,2,\cdots,N_3$ ($N_3$ is slightly greater than $r+1=13$) for each $j=1,2,\cdots,K$. Finally, the least square solutions $c_j(\vxi)$ for each $j=1,2,\cdots,K$ are plugged into the formula \eqref{RB_expansion} to approximate the solution of Eq.\eqref{randommultiscaleelliptic} with the coefficient \eqref{coefficientofexampleInterface}. The mean relative $L^2$ errors and $H^1$ errors are shown in Figure \ref{fig:ExampleInterfacelocalerrors}. 

\begin{figure}[htbp]
	\centering
	\begin{subfigure}[b]{0.45\textwidth}
		\includegraphics[width=1.0\linewidth]{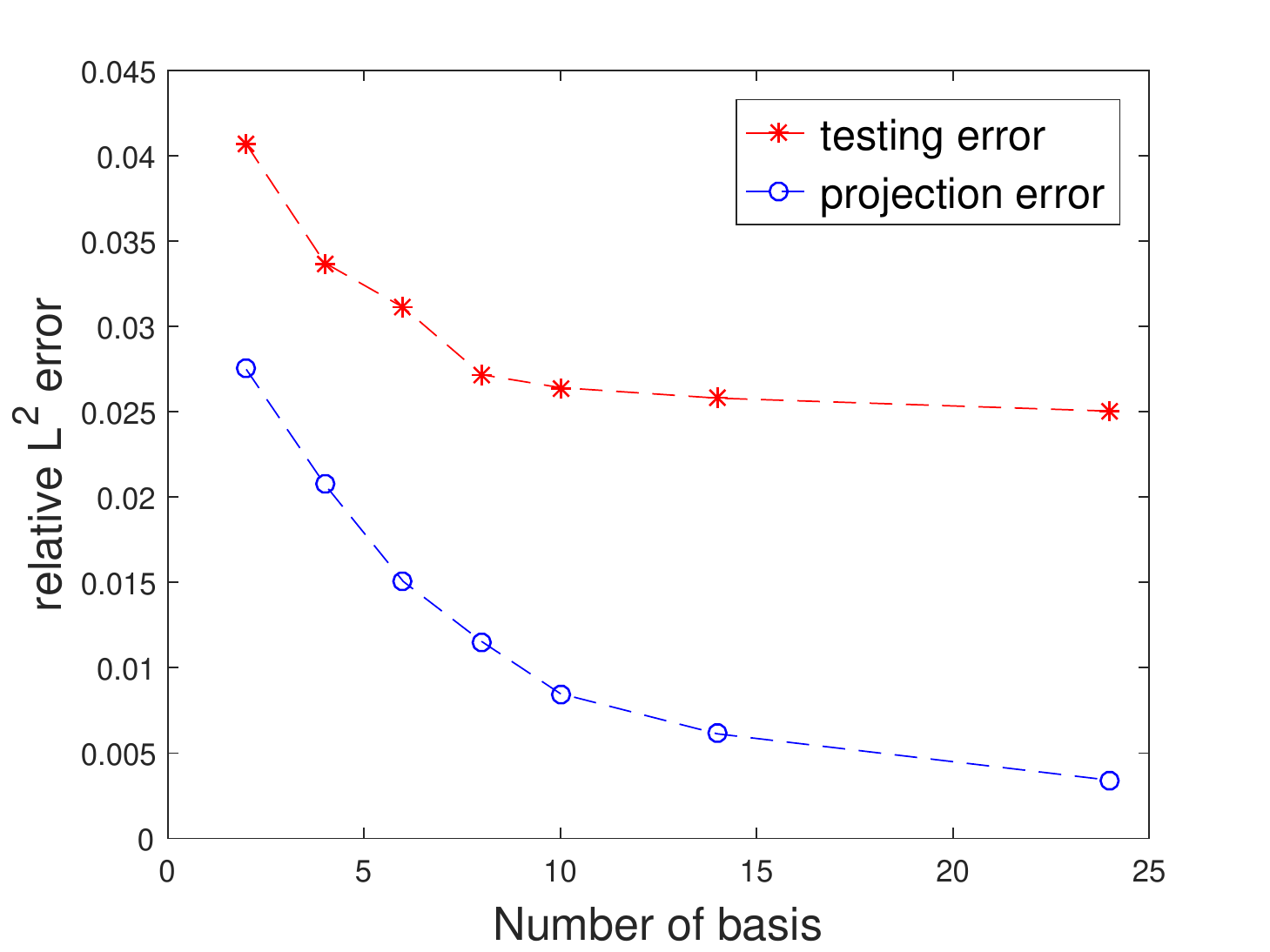}
		%		\caption{ Training and testing loss during training procedure.}
		\label{fig:ExampleInterfaceerrors-a}
	\end{subfigure}
	\begin{subfigure}[b]{0.45\textwidth}
		\includegraphics[width=1.0\linewidth]{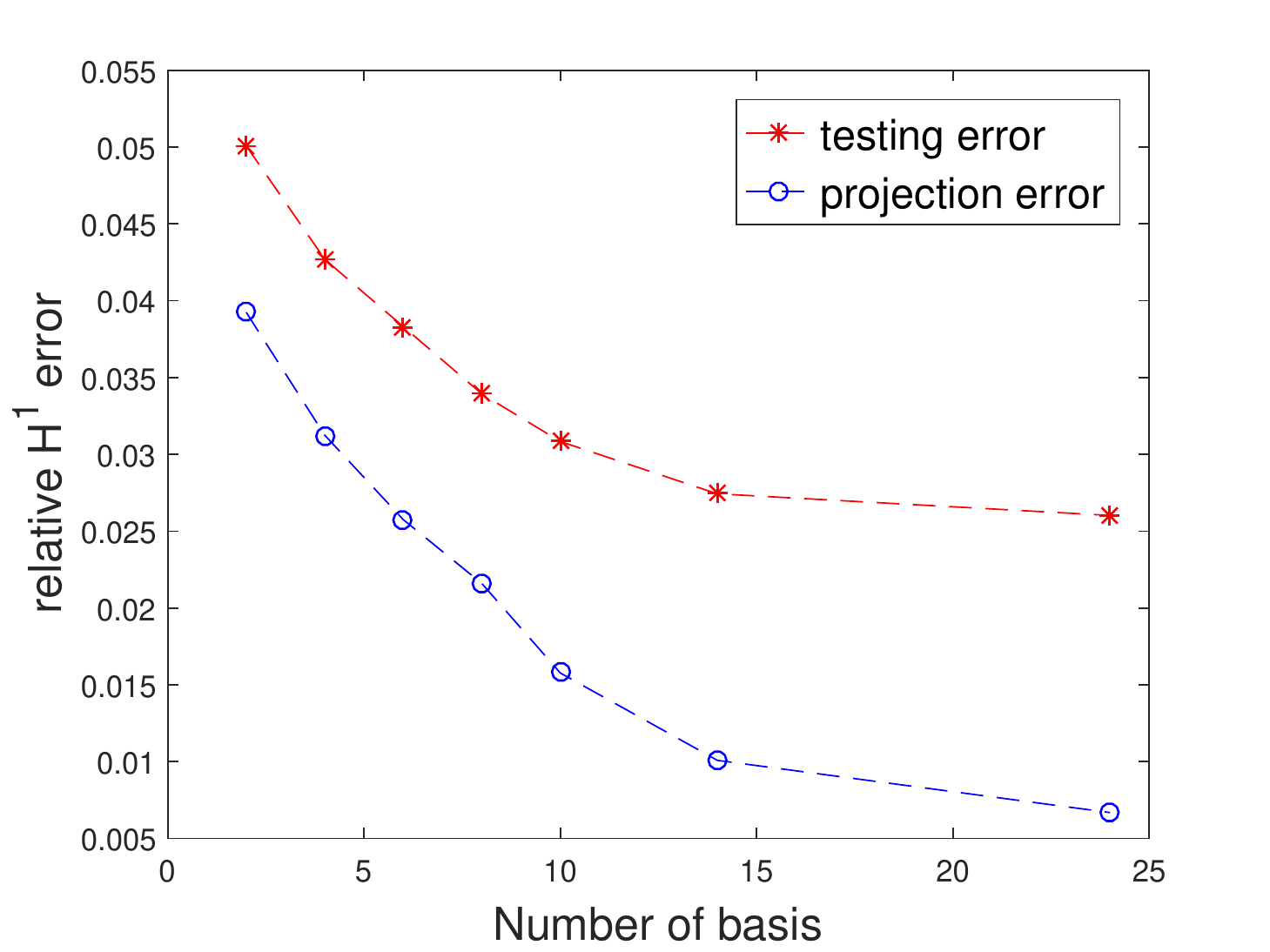}
		%		\caption{ Relative $L^2$ error of testing set during training procedure.} 
		\label{fig:ExampleInterfaceerrors-b}
	\end{subfigure}
	\caption{ The relative errors with increasing number of basis in the local problem of Sec.\ref{sec:ExampleInterface} .}
	\label{fig:ExampleInterfacelocalerrors}
\end{figure}

\subsection{An example with high-dimensional random coefficient and force function}\label{sec:Example3}
\noindent
We solve the problem \eqref{randommultiscaleelliptic} with an exponential type coefficient and random force function,
where the total number of random variables is twenty. Specifically, the coefficient is parameterized by eighteen i.i.d. random variables, i.e.
\begin{align}
a(x,y,\omega) = \exp\Big(\sum_{i=1}^{18} \sin(2\pi \frac{x\sin(\frac{i\pi}{18}) +y\cos(\frac{i\pi}{18})   }{\epsilon_i}   )\xi_i(\omega) \Big),
\label{coefficientofexample3}
\end{align}
where $\epsilon_i=\frac{1}{2i+9}$, $i=1,2,\cdots,18$ and $\xi_i(\omega)$, $i=1,...,18$ are i.i.d. uniform random variables in $[-\frac{1}{5},\frac{1}{5}]$. The force function is a Gaussian density function $f(x,y) = \frac{1}{2\pi\sigma^2}\exp(-\frac{(x-\theta_1)^2+(y-\theta_2)^2}{2\sigma^2})$ with a random center $(\theta_1,\theta_2)$ that is a random point uniformly distributed in the subdomain $D_2=[\frac{1}{4},\frac{3}{4}]\times[\frac{1}{16},\frac{5}{16}]$ and $\sigma=0.01$. When $\sigma$ is small, the 
Gaussian density function $f(x,y)$ can be used to approximate the Dirac-$\delta$ function, such as modeling wells in reservoir simulations.

We first solve the local problem of \eqref{randommultiscaleelliptic} with the coefficient \eqref{coefficientofexample3}, where the subdomain of interest is $D_1=[\frac{1}{4},\frac{3}{4}]\times[\frac{11}{16},\frac{15}{16}]$. In Figures \ref{fig:Example3eigenvalues-a} and \ref{fig:Example3eigenvalues-b}, we show the magnitude of leading eigenvalues and the ratio of the accumulated sum of the eigenvalue over the total sum, respectively. We observe similar exponential decay properties of eigenvalues even if the force function contains randomness. These results show that we can still build a set of data-driven basis functions to solve problem  \eqref{randommultiscaleelliptic} with coefficient \eqref{coefficientofexample3}.

\begin{figure}[tbph] 
	\centering
	\begin{subfigure}[b]{0.45\textwidth}
		\includegraphics[width=1.0\linewidth]{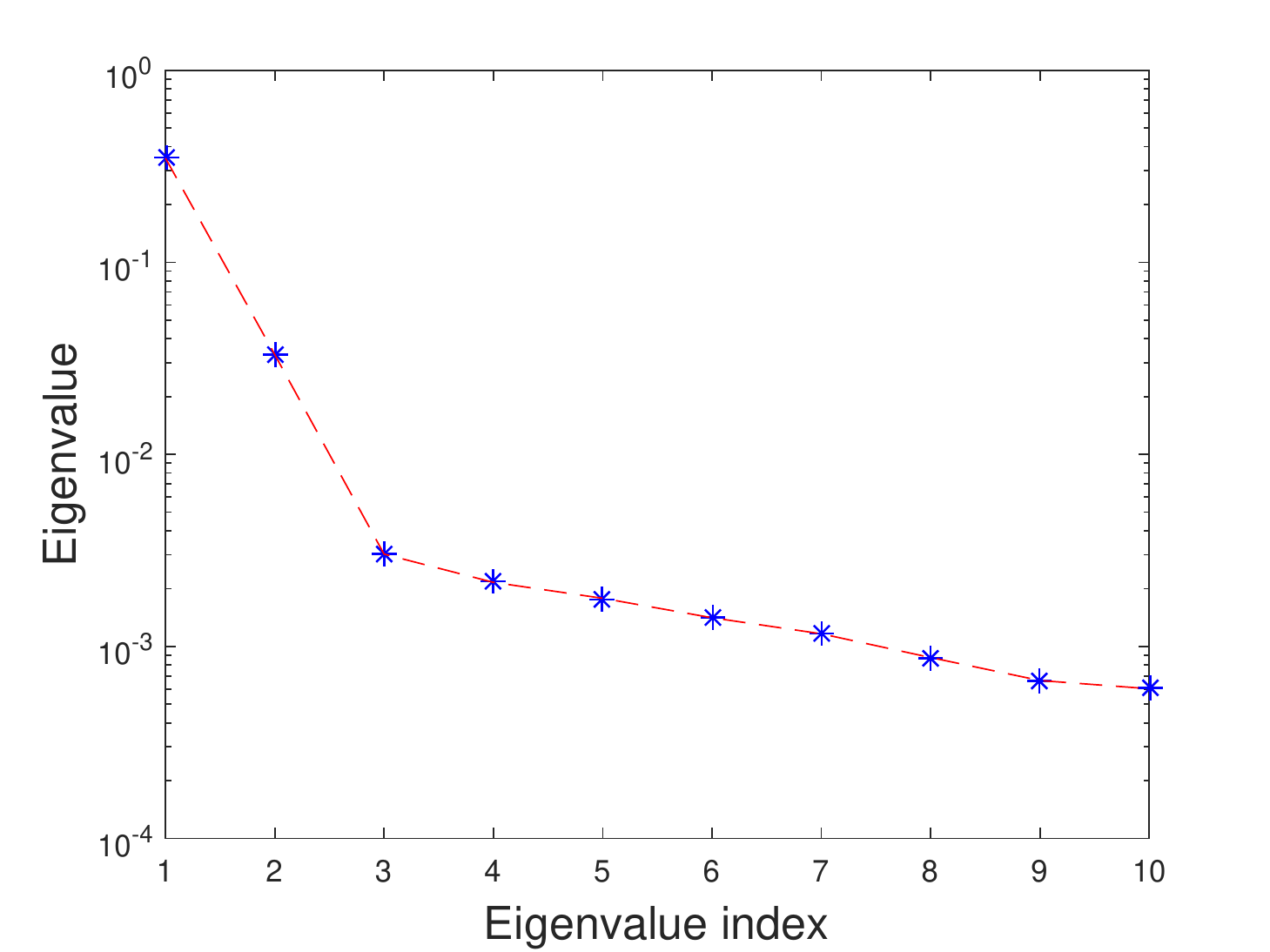} 
		\caption{ Decay of eigenvalues.}
		\label{fig:Example3eigenvalues-a}
	\end{subfigure}
	\begin{subfigure}[b]{0.45\textwidth}
		\includegraphics[width=1.0\linewidth]{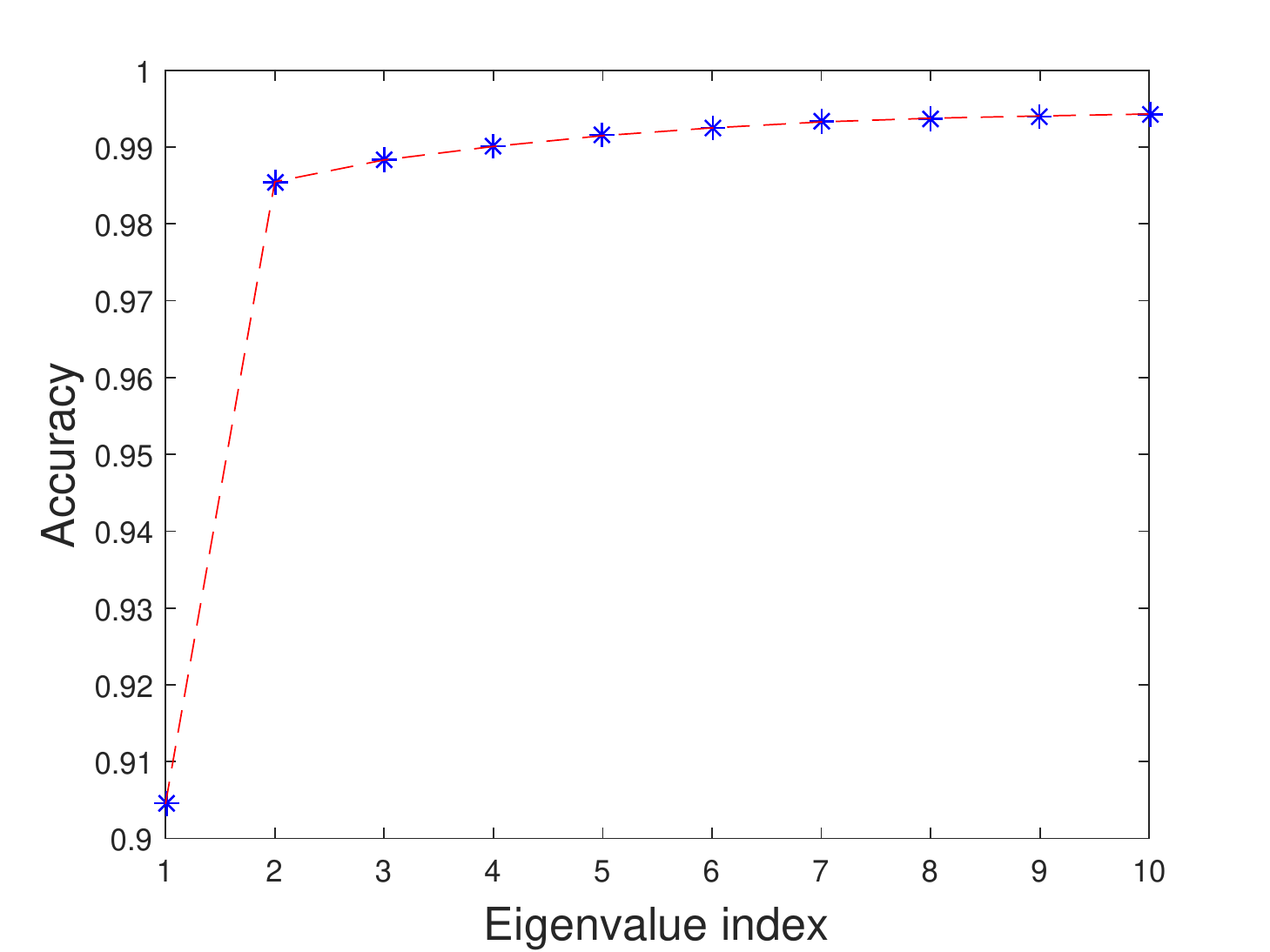} 
		\caption{ $1-\sqrt{\sum_{j=n+1}^{N}\lambda_{j}/\sum_{j=1}^{N}\lambda_{j}}$, $n=1,2,...$.} 
		\label{fig:Example3eigenvalues-b}
	\end{subfigure}
	\caption{The decay properties of the eigenvalues in the problem of Sec.\ref{sec:Example3}.}
	\label{fig:Example3eigenvalues}
\end{figure}  
 
Notice that both the coefficient and force contain randomness here. We put the random variables $\vxi(\omega)$ in the coefficient and the random variables $\gvec{\theta}(\omega)$ in the force together when we construct the mapping $\textbf{F}$. Moreover, the dimension of randomness, 18+2=20,  is too large even for sparse grids. Here we construct the mapping $\textbf{F}:(\vxi(\omega),\gvec{\theta}(\omega))\mapsto \textbf{c}(\omega)$ using the neural network as depicted in Figure \ref{fig:DNNstructure2}. The neural network has 4 hidden layers and each layer has 50 units. Naturally, the number of the input units is 20 and the number of the output units is $K$. The layer between input units and first layer of hidden units is an affine transform. So is the layer between output units and last layer of hidden units. Each two layers of hidden units are connected by an affine transform, a tanh (hyperbolic tangent) activation and a residual connection, i.e. $\textbf{h}_{l+1}=\tanh(\textbf{A}_l \textbf{h}_l+\textbf{b}_l)+\textbf{h}_l$, $l=1,2,3$, where $\textbf{h}_l$ is $l$-th layer of hidden units, $\textbf{A}_l$ is a 50-by-50 matrix and $\textbf{b}_l$ is a 50-by-1 vector. Under the same setting of neural network, if the rectified linear unit (ReLU), which is piecewise linear, is used
as the activation function, we observe a much bigger error. Therefore  we choose the hyperbolic tangent activation function and implement the residual neural network (ResNet) here \cite{he2016deep}. 

\begin{figure}[tbph]
\tikzset{global scale/.style={
    scale=#1,
    every node/.append style={scale=#1}
  }
}
	\centering
	\begin{tikzpicture}[global scale=0.6]
	\tikzstyle{inputvariables}  = [circle, very thick, fill=yellow,draw=black, minimum height=0.1cm, text width = 0.2cm]
	\tikzstyle{hiddenvariables}  = [circle, thick, draw =black, fill=blue,minimum height=0.1cm, text width = 0.2cm]
	\tikzstyle{outputvariables}  = [circle, very thick, draw=black, fill=red,minimum height=0.1cm, text width = 0.2cm]
	\tikzstyle{dottedvariables}  = [thick, fill=white, minimum size = 0.2cm]
	\tikzstyle{textnode} = [thick, fill=white, minimum size=0.1cm ]
	% input variables
	\node[inputvariables] (x1) at (0,0) {$\xi_1$};
	\node[inputvariables, below=0.4cm of x1] (x2) {$\xi_2$};
	\node[dottedvariables, below=0.1cm of x2] (x3) {$\vdots$};
	\node[inputvariables, below=0.1cm of x3] (x4) {$\xi_{r_1}$};
	\node[inputvariables, below=0.4cm of x4] (x5) {$\theta_{1}$};
	\node[inputvariables, below=0.1cm of x5] (x6) {$\theta_{2}$};
	\node[dottedvariables, below=0.1cm of x6] (x7) {$\vdots$};
	\node[inputvariables, below=0.4cm of x7] (x8) {$\theta_{r_2}$};
	\draw[-,thick,decorate, decoration={brace, raise=0.3cm}] (x4.south west)--(x1.north west);
	\node[textnode, above left of=x3,left=0.2cm] {$\gvec{\xi}(\omega)$};
	\draw[-,thick,decorate, decoration={brace, raise=0.3cm}] (x8.south west)--(x5.north west);
	\node[textnode, above left of=x7,left=0.2cm] {$\gvec{\theta}(\omega)$};
	% hidden variables
	\node[hiddenvariables] (h1) at (3,-1) {};
	\node[hiddenvariables, below=0.4cm of h1] (h2) {};
	\node[dottedvariables, below=0.1cm of h2] (h3) {$\vdots$};
	\node[dottedvariables, below=0.1cm of h3] (h4) {$\vdots$};
	\node[hiddenvariables, below=0.4cm of h4] (h5) {};
	\node[hiddenvariables, below=0.4cm of h5] (h6) {};
	
	\node[dottedvariables] (h31) at (5.5,-1) {$\cdots$};
	\node[dottedvariables, below=0.5cm of h31] (h32) {$\cdots$};
	\node[dottedvariables, below=0.5cm of h32] (h33) {$\cdots$};
	\node[dottedvariables, below=0.5cm of h33] (h34) {$\dots$};
	\node[dottedvariables, below=0.5cm of h34] (h35) {$\cdots$};
	\node[dottedvariables, below=0.5cm of h35] (h36) {$\cdots$};
	
	\node[hiddenvariables] (h21) at (8,-1) {};
	\node[hiddenvariables, below=0.4cm of h21] (h22) {};
	\node[dottedvariables, below=0.1cm of h22] (h23) {$\vdots$};
	\node[dottedvariables, below=0.1cm of h23] (h24) {$\vdots$};
	\node[hiddenvariables, below=0.4cm of h24] (h25) {};
	\node[hiddenvariables, below=0.4cm of h25] (h26) {};
	% output variables
	\node[outputvariables] (y1) at (11,0) {$c_1$};
	\node[outputvariables, below=0.4cm of y1] (y2) {$c_2$};
	\node[outputvariables, below=0.4cm of y2] (y3) {$c_3$};
	\node[dottedvariables, below=0.2cm of y3] (y4) {$\vdots$};
	\node[dottedvariables, below=0.2cm of y4] (y5) {$\vdots$};
	\node[dottedvariables, below=0.2cm of y5] (y6) {$\vdots$};
	\node[dottedvariables, below=0.2cm of y6] (y7) {$\vdots$};
	\node[outputvariables, below=0.2cm of y7] (y8) {$c_k$};
	\draw[-,thick,decorate, decoration={brace, raise=0.3cm}] (y1.north east)--(y8.south east);
	\node[textnode, above right of=y5, right=0.2cm] {$\mathbf{c}(\omega)$};
	% input layer
	\path [-] (x1) edge node {} (h1);
	\path [-] (x2) edge node {} (h1);
	\path [-] (x4) edge node {} (h1);
	\path [-] (x5) edge node {} (h1);
	\path [-] (x6) edge node {} (h1);
	\path [-] (x8) edge node {} (h1);
	\path [-] (x1) edge node {} (h2);
	\path [-] (x2) edge node {} (h2);
	\path [-] (x4) edge node {} (h2);
	\path [-] (x5) edge node {} (h2);
	\path [-] (x6) edge node {} (h2);
	\path [-] (x8) edge node {} (h2);
	\path [-] (x1) edge node {} (h5);
	\path [-] (x2) edge node {} (h5);
	\path [-] (x4) edge node {} (h5);
	\path [-] (x5) edge node {} (h5);
	\path [-] (x6) edge node {} (h5);
	\path [-] (x8) edge node {} (h5);
	\path [-] (x1) edge node {} (h6);
	\path [-] (x2) edge node {} (h6);
	\path [-] (x4) edge node {} (h6);
	\path [-] (x5) edge node {} (h6);
	\path [-] (x6) edge node {} (h6);
	\path [-] (x8) edge node {} (h6);
	% hidden layers
	\path [-] (h1) edge node {} (h31);
	\path [-] (h2) edge node {} (h31);
	\path [-] (h5) edge node {} (h31);
	\path [-] (h6) edge node {} (h31);
	\path [-] (h1) edge node {} (h32);
	\path [-] (h2) edge node {} (h32);
	\path [-] (h5) edge node {} (h32);
	\path [-] (h6) edge node {} (h32);
	\path [-] (h1) edge node {} (h33);
	\path [-] (h2) edge node {} (h33);
	\path [-] (h5) edge node {} (h33);
	\path [-] (h6) edge node {} (h33);
	\path [-] (h1) edge node {} (h34);
	\path [-] (h2) edge node {} (h34);
	\path [-] (h5) edge node {} (h34);
	\path [-] (h6) edge node {} (h34);
	\path [-] (h1) edge node {} (h35);
	\path [-] (h2) edge node {} (h35);
	\path [-] (h5) edge node {} (h35);
	\path [-] (h6) edge node {} (h35);
	\path [-] (h1) edge node {} (h36);
	\path [-] (h2) edge node {} (h36);
	\path [-] (h5) edge node {} (h36);
	\path [-] (h6) edge node {} (h36);
	
	\path [-] (h21) edge node {} (h31);
	\path [-] (h22) edge node {} (h31);
	\path [-] (h25) edge node {} (h31);
	\path [-] (h26) edge node {} (h31);
	\path [-] (h21) edge node {} (h32);
	\path [-] (h22) edge node {} (h32);
	\path [-] (h25) edge node {} (h32);
	\path [-] (h26) edge node {} (h32);
	\path [-] (h21) edge node {} (h33);
	\path [-] (h22) edge node {} (h33);
	\path [-] (h25) edge node {} (h33);
	\path [-] (h26) edge node {} (h33);
	\path [-] (h21) edge node {} (h34);
	\path [-] (h22) edge node {} (h34);
	\path [-] (h25) edge node {} (h34);
	\path [-] (h26) edge node {} (h34);
	\path [-] (h21) edge node {} (h35);
	\path [-] (h22) edge node {} (h35);
	\path [-] (h25) edge node {} (h35);
	\path [-] (h26) edge node {} (h35);
	\path [-] (h21) edge node {} (h36);
	\path [-] (h22) edge node {} (h36);
	\path [-] (h25) edge node {} (h36);
	\path [-] (h26) edge node {} (h36);
	% output layer
	\path [-] (h21) edge node {} (y1);
	\path [-] (h22) edge node {} (y1);
	\path [-] (h25) edge node {} (y1);
	\path [-] (h26) edge node {} (y1);
	\path [-] (h21) edge node {} (y2);
	\path [-] (h22) edge node {} (y2);
	\path [-] (h25) edge node {} (y2);
	\path [-] (h26) edge node {} (y2);
	\path [-] (h21) edge node {} (y3);
	\path [-] (h22) edge node {} (y3);
	\path [-] (h25) edge node {} (y3);
	\path [-] (h26) edge node {} (y3);
	\path [-] (h21) edge node {} (y8);
	\path [-] (h22) edge node {} (y8);
	\path [-] (h25) edge node {} (y8);
	\path [-] (h26) edge node {} (y8);
	
	\node[textnode, font=\fontsize{15}{6}\selectfont, above=1.0cm of h31] (Text1){Hidden units};
	\node[textnode, font=\fontsize{15}{6}\selectfont, left =1.8cm of Text1] (Text2) {Input units};
	\node[textnode, font=\fontsize{15}{6}\selectfont, right = 1.8cm of Text1] {Output units};
	\end{tikzpicture}
	\caption{Structure of neural network, where $r_1=18$ and $r_2=2$.}
	\label{fig:DNNstructure2}
\end{figure} 
We use $N_1=5000$ samples for network training in the offline stage and use $N_2=200$ samples for testing in the online stage. The sample data pairs for training are $\{(\vxi^n(\omega),\gvec{\theta}^n(\omega)),\textbf{c}^n(\omega)\}_{n=1}^{N_1}$, where 
$\vxi^n(\omega)\in [-\frac{1}{5},\frac{1}{5}]^{18}$, $\gvec{\theta}^n(\omega))\in [\frac{1}{4},\frac{3}{4}]\times[\frac{1}{16},\frac{5}{16}]$, and $\textbf{c}^n(\omega)\in R^{K}$.  We define the loss function of network training as  
\begin{align}
loss\big(\{\textbf{c}^n\},\{\textbf{\^{c}}^n\}\big) = \frac{1}{N_1}\sum_{n=1}^{N_1}\frac{1}{K}|\textbf{c}^{n}-\textbf{\^{c}}^{n}|^2, 
\end{align}
where $\textbf{c}^{n}$ are the training data and $\textbf{\^{c}}^n$ are the output of the neural network.

Figure \ref{fig:Example3locall2err-a} shows the value of loss function during training procedure. Figure \ref{fig:Example3locall2err-b} shows the corresponding mean relative error of the testing samples in $L^2$ norm. 
Eventually the relative error of the neural network reaches about $1.5\times 10^{-2}$. 
Figure \ref{fig:Example3locall2err-c} shows the corresponding mean relative error of the testing samples in $H^1$ norm. We remark that many existing methods become extremely expensive or infeasible when the problem is parameterized by high-dimensional random variables like this one. 
%Therefore, it helps us alleviate the curse of dimensionality to a certain extent. 
  
\begin{figure}[htbp]
	\centering
	%	\begin{minipage}[b]{0.50\textwidth}
	\begin{subfigure}[b]{0.32\textwidth}
		$K=5$\\
		\includegraphics[width=1.0\linewidth]{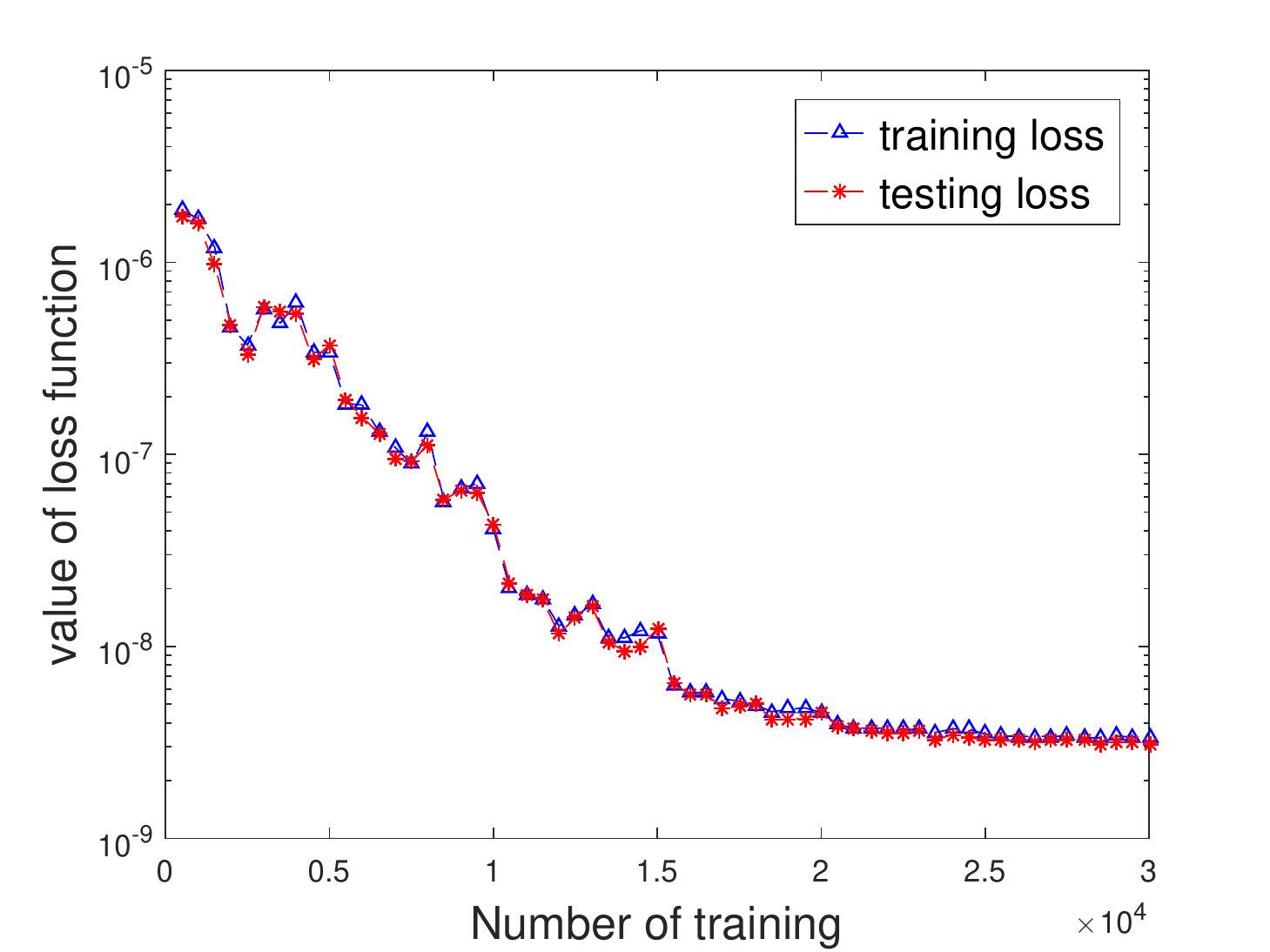} \\
		$K=10$\\
		\includegraphics[width=1.0\linewidth]{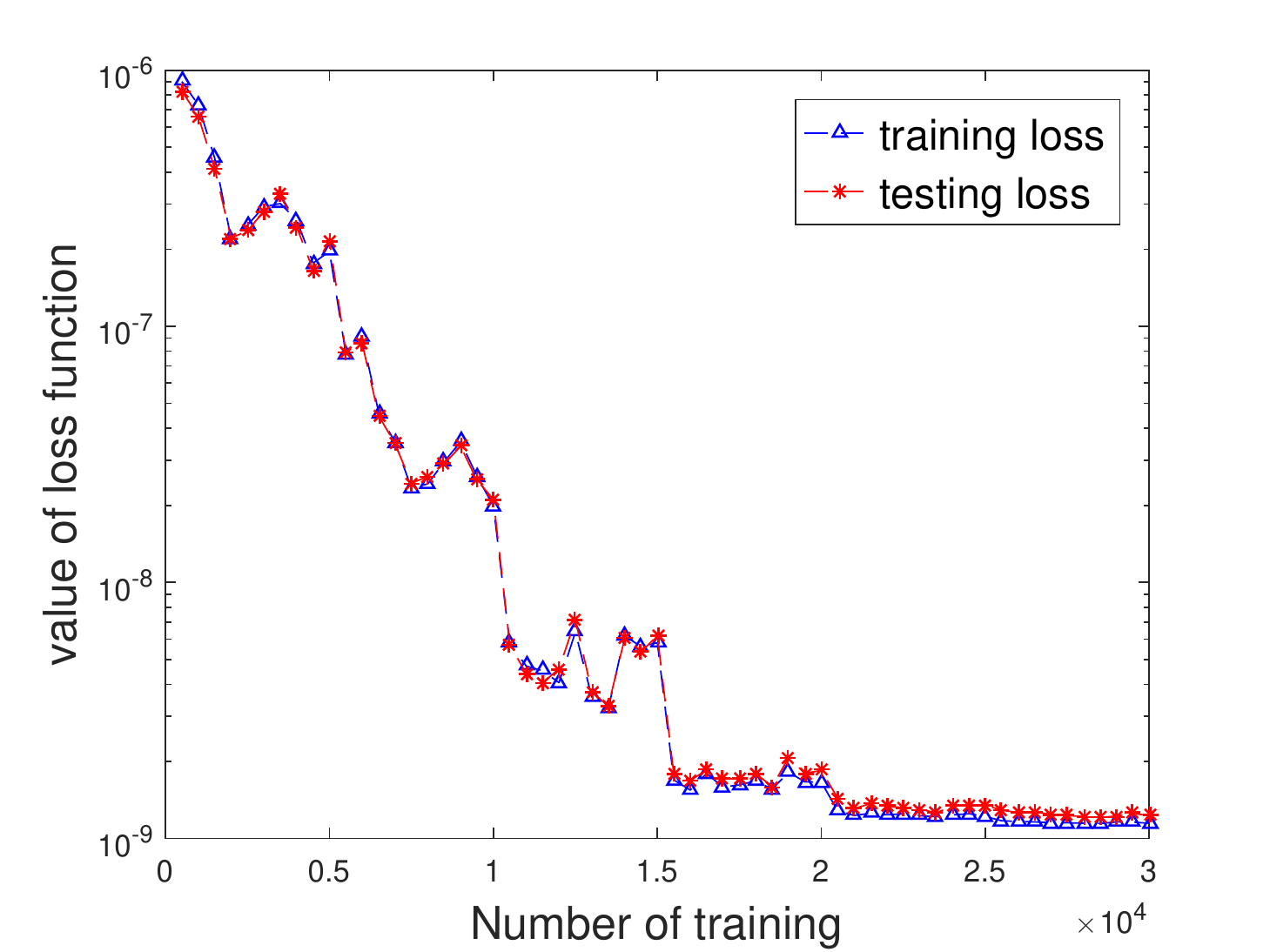} \\
		$K=20$\\
		\includegraphics[width=1.0\linewidth]{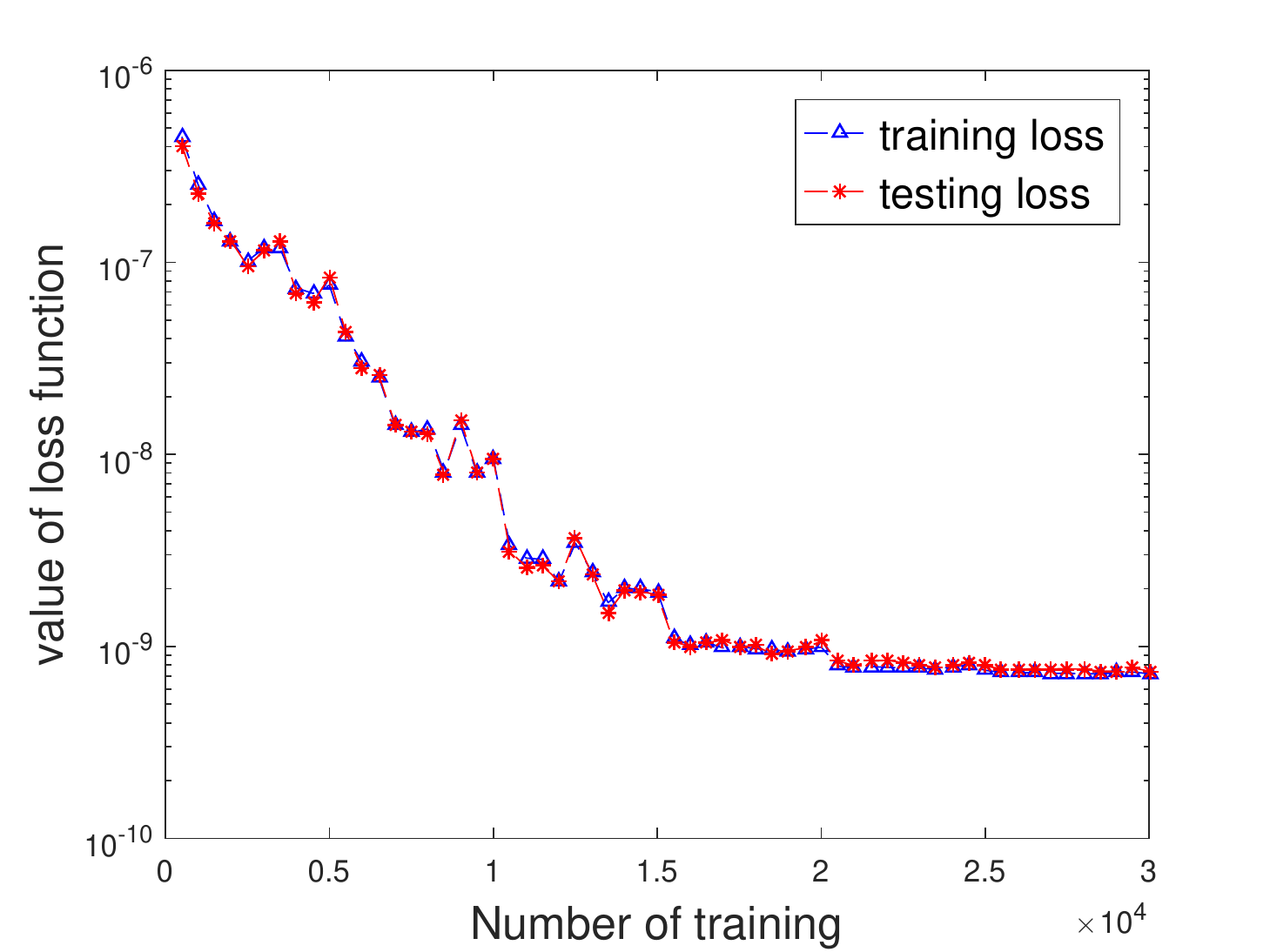} 
		\caption{ Loss.}
		\label{fig:Example3locall2err-a}
	\end{subfigure}
	%	\end{minipage}
	%	\begin{minipage}[b]{0.50\textwidth}
	\begin{subfigure}[b]{0.32\textwidth}
		~~\\
		\includegraphics[width=1.0\linewidth]{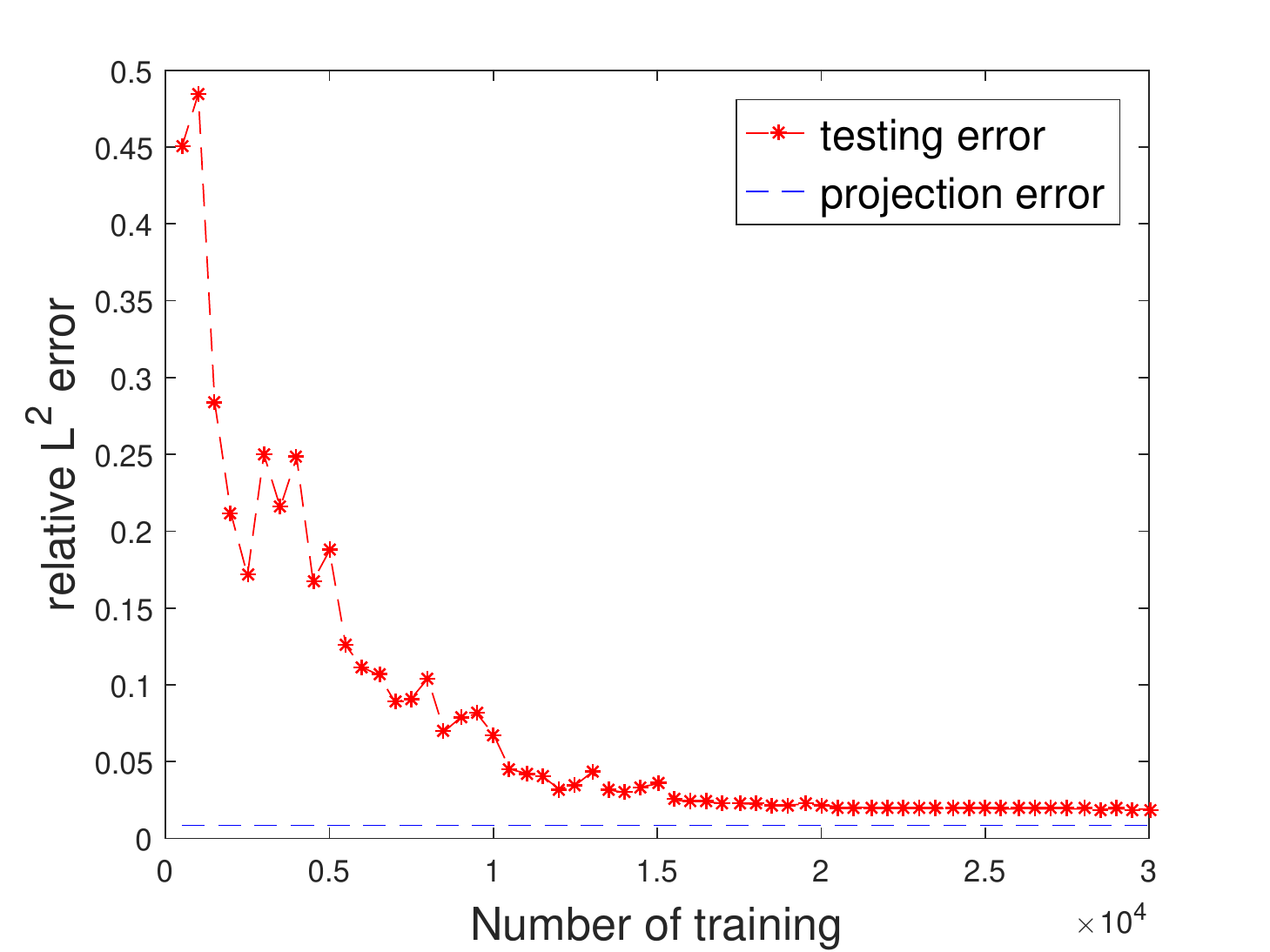} \\
		~~\\
		\includegraphics[width=1.0\linewidth]{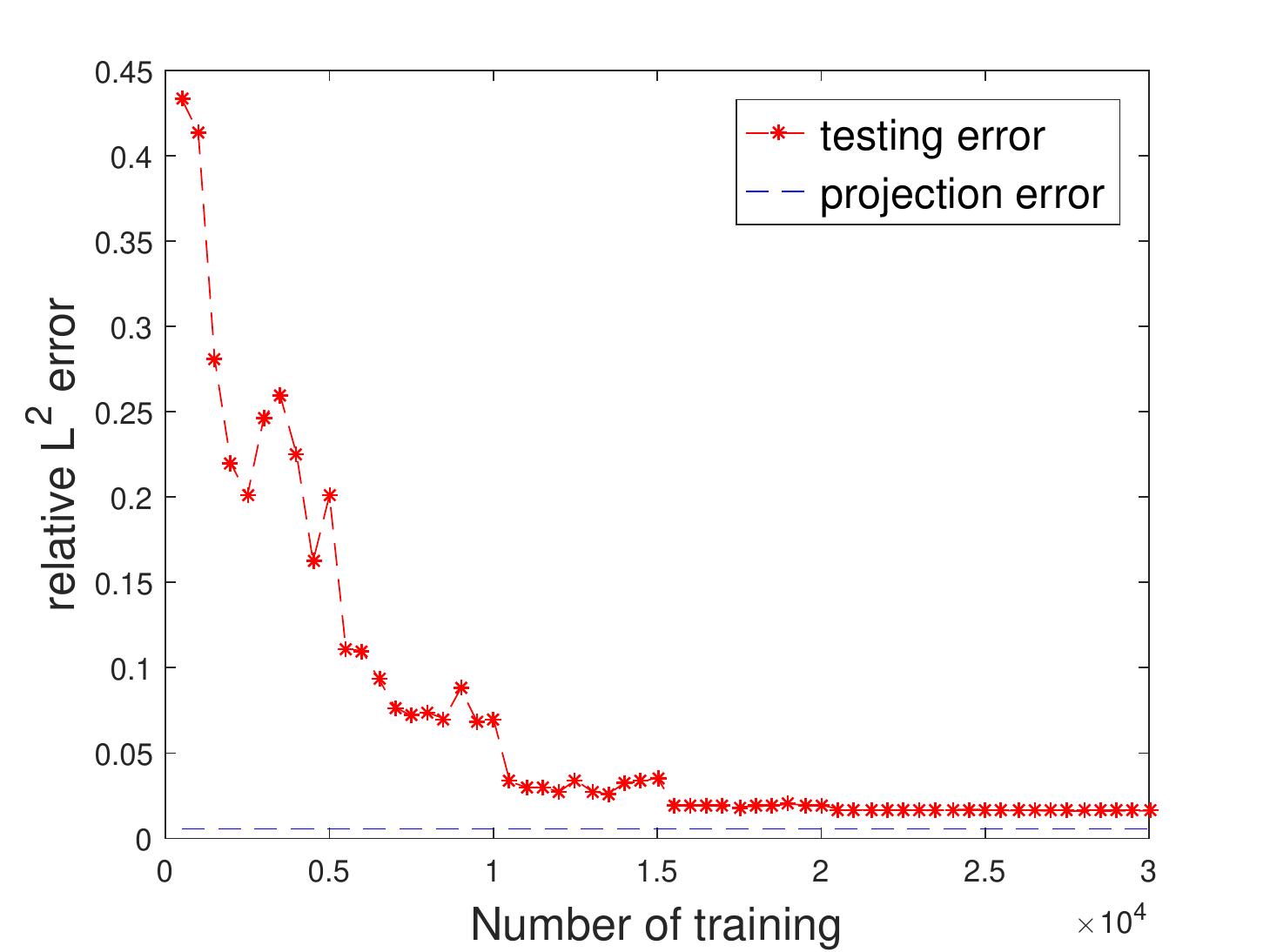} \\
		~~\\
		\includegraphics[width=1.0\linewidth]{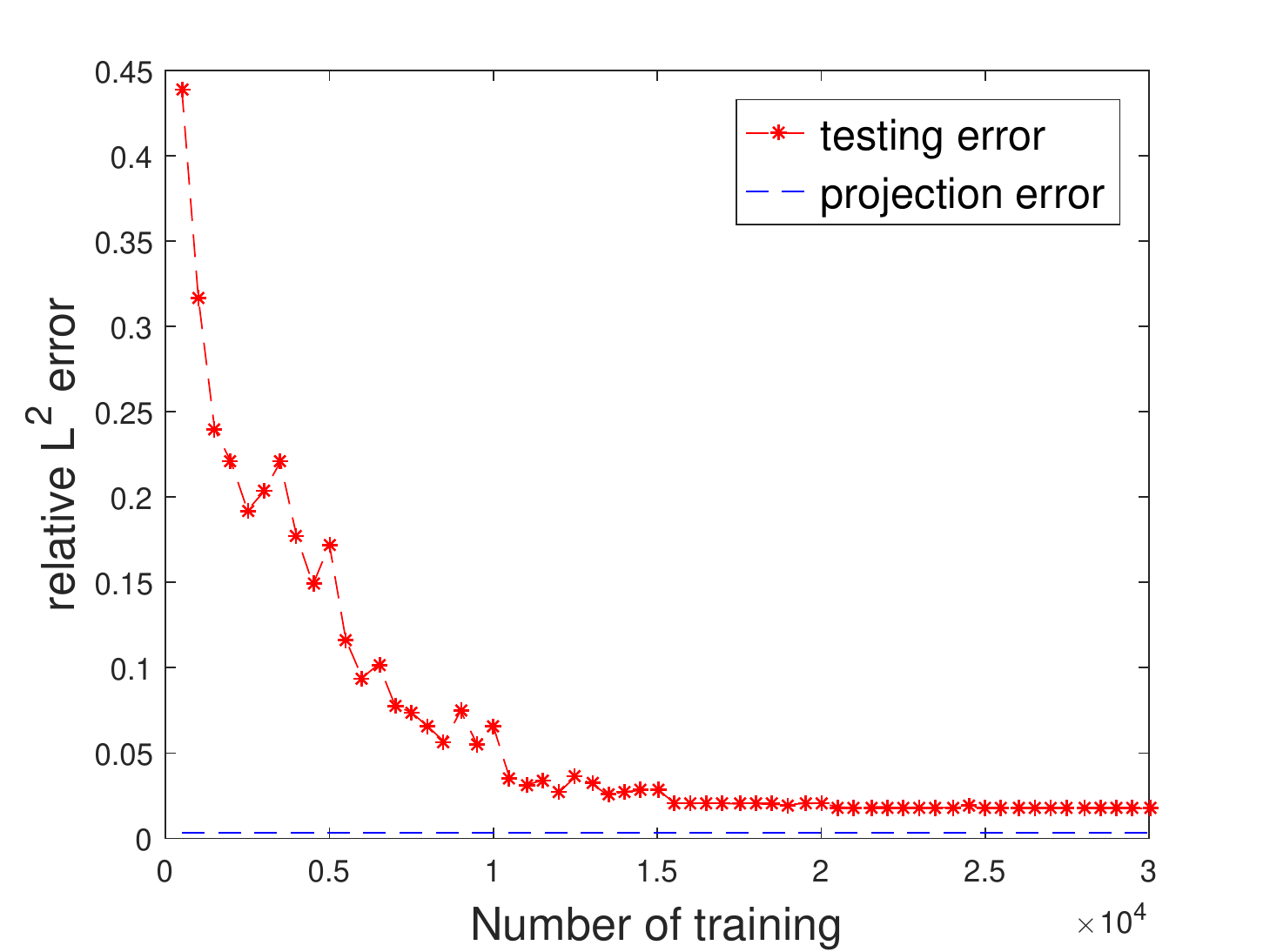} 
		\caption{ Relative $L^2$ error.} 
		\label{fig:Example3locall2err-b}
	\end{subfigure}
	%	\end{minipage}
	\begin{subfigure}[b]{0.32\textwidth}
		~~\\
		\includegraphics[width=1.0\linewidth]{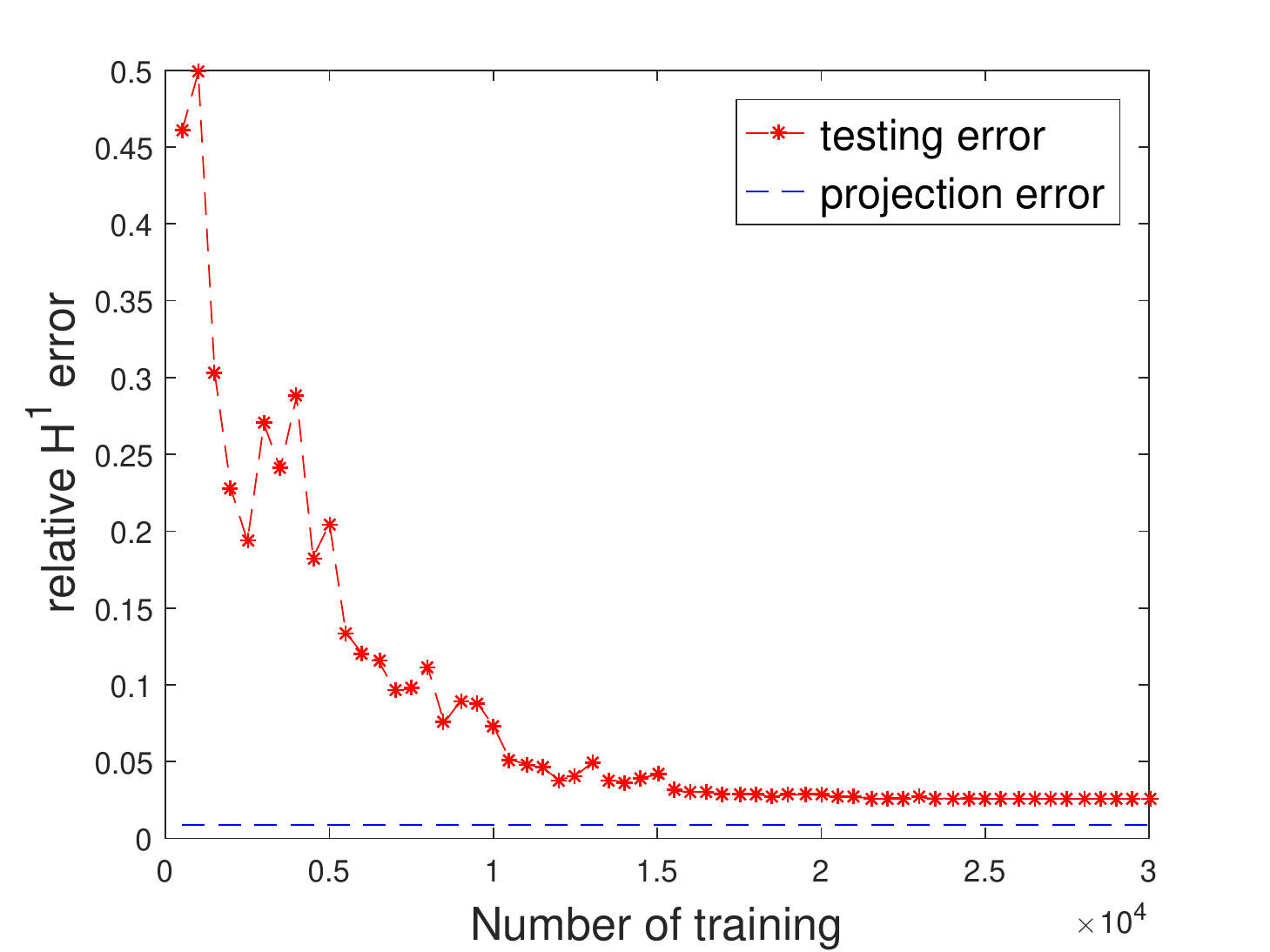}\\ 
		~~\\
		\includegraphics[width=1.0\linewidth]{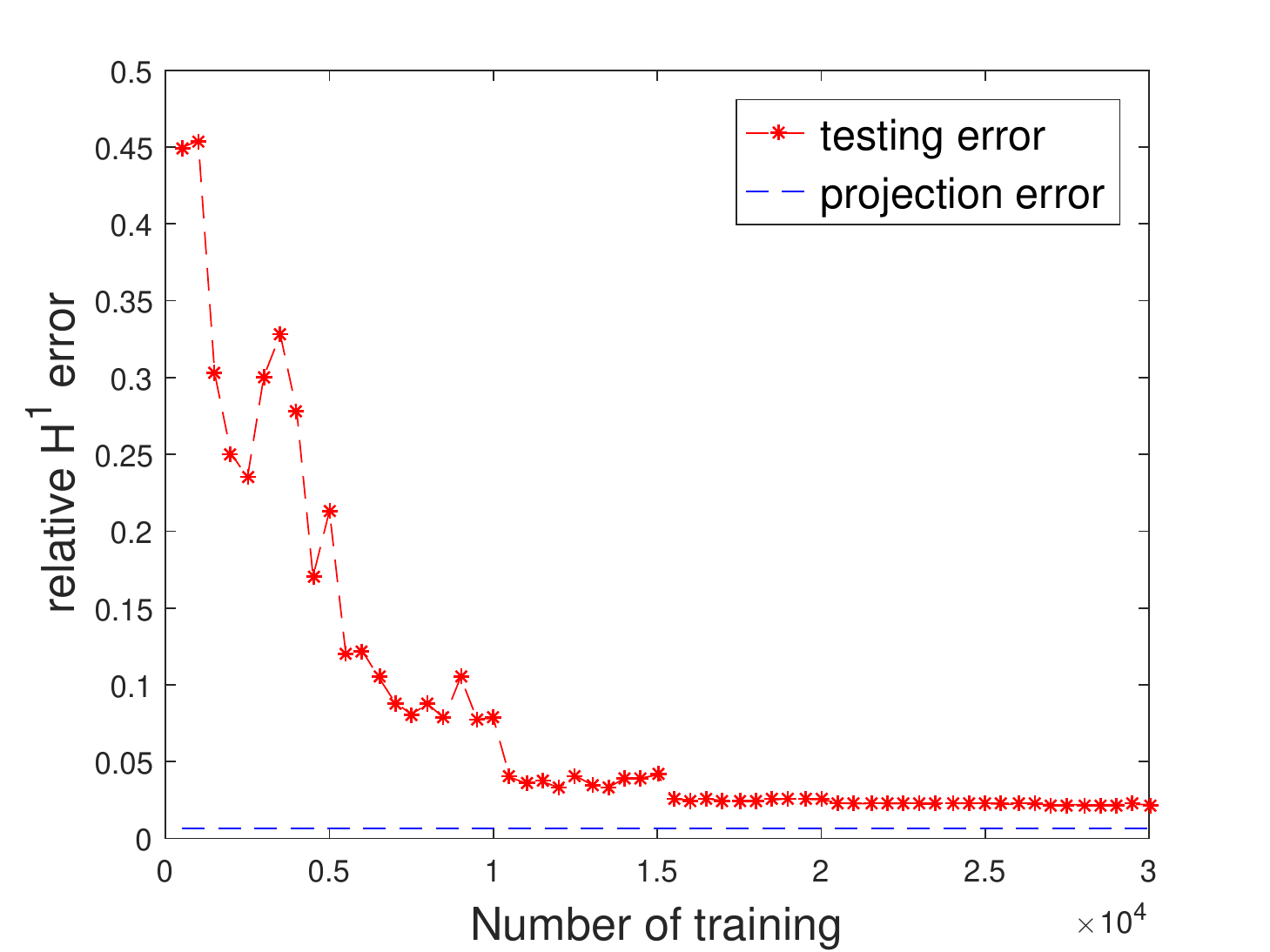}\\
		~~\\
		\includegraphics[width=1.0\linewidth]{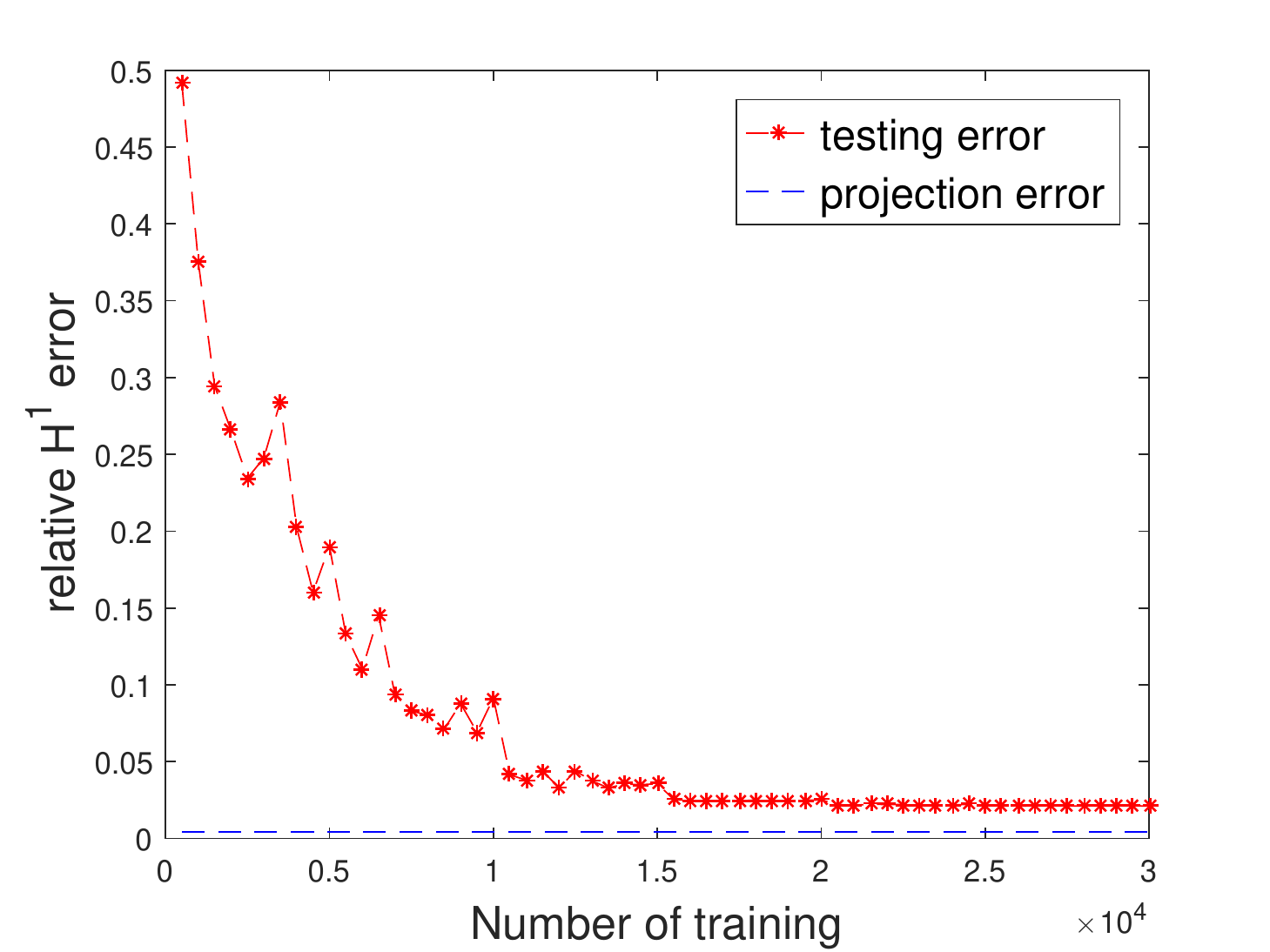}
		\caption{ Relative $H^1$ error.} 
		\label{fig:Example3locall2err-c}
	\end{subfigure}
	\caption{ First column: the value of loss function during training procedure. Second column and third column: the mean relative errors of the testing set during training procedure  in $L^2$  and $H^1$ norm respectively.} %First row: $K=5$, second row: $K=10$, third row: $K=20$. }
	\label{fig:Example3locall2err}
\end{figure}

\subsection{An example with unknown random coefficient and source function}\label{sec:Example4}
\noindent
Here we present an example where the models of the random coefficient and source are unknown. Only a set of sample solutions are provided as well as a few censors can be placed at certain locations for solution measurements. This kind of scenario appears often in practice. We take the least square fitting method as described in Section \ref{sec:LS}. Our numerical experiment is still based on \eqref{randommultiscaleelliptic}, which is used to generate solution samples (instead of experiments or measurements in real practice). But once the data are generated, we do not assume any knowledge of the coefficient or the source when computing a new solution. 

%In a recent work \cite{chung2018cluster}, the second author of this paper proposed a clustering algorithm to study this type of problem. 
%In this paper, we aim to reconstruct the solution based on the knowing values at some sensing places. We follow the technique described in \cite{Kutz2017Sensor}, which studies how to decide the sensor points for optimal reconstruction. 

To be specific, the coefficient takes the form
\begin{align}
a(x,y,\omega) = \exp\Big(\sum_{i=1}^{24} \sin(2\pi \frac{x\sin(\frac{i\pi}{24}) +y\cos(\frac{i\pi}{24})   }{\epsilon_i}   )\xi_i(\omega) \Big)
\label{coefficientofexample4}
\end{align}
where $\epsilon_i=\frac{1+i}{100}$, $i=1,2,\cdots,24$ and $\xi_i(\omega)$, $i=1,...,24$ are i.i.d. uniform random variables in $[-\frac{1}{6},\frac{1}{6}]$. The force function is a random function $f(x,y) = \sin(\pi(\theta_1x+2\theta_2))\cos(\pi(\theta_3y+2\theta_4))\cdot I_{D_2}(x,y)$ with  i.i.d. uniform random variables $\theta_1,\theta_2,\theta_3,\theta_4$ in $[0,2]$. We first generate $N=2000$ solutions samples (using standard FEM) $u(x_j, \omega_i), i=1, \ldots, N, j=1, \ldots, J$, where $x_j$ are the points where solution samples are measured. Then a set of $K$ data-driven basis $\phi_k(x_j), j=1, \ldots, J, k=1, \dots, K$ are extracted from the solution samples as before.  

Next we determine $M$ good sensing locations from the data-driven basis so that the least square problem \eqref{eq:LS} is not ill-conditioned. We follow the method proposed in \cite{Kutz2017Sensor}. Define $\Phi=[\boldsymbol{\phi}_1, \ldots, \boldsymbol{\phi}_K]\in R^{J\times K}$, where $\boldsymbol{\phi}_k=[\phi_k(x_1), \ldots, \phi_k(x_J)]^T$. If $M=K$, QR factorization with column pivoting is performed on $\Phi^T$. If $M>K$, QR factorization with pivoting is performed on $\Phi\Phi^T$. The first $M$ pivoting indices provide the measurement  locations. Once a new solution is measured at these $M$ selected locations, the least square problem \eqref{eq:LS} is solved to determine the coefficients $c_1, c_2, \ldots, c_K$ and the new solution is approximated by $u(x_j,\omega)=\sum_{k=1}^K c_k\phi_k(x_j)$.

%Now we determine the optimal sensing places for either local or global problems. Let $\textbf{C}$ denote the sensing operator, $\vec{y}$ denote the observed values of the solution at sensing places and $\Phi$ denote the recovery operator from coefficients $\vec{d}$ to approximate solution , i.e $u(x,\omega)\approx \Phi(\vec{d})$, $\vec{d}=[d_1,d_2,\cdots,d_K]^T, K=K$ or $K_g$ as in \eqref{RB_expansion} or \eqref{RB_expansion2}. Thus, from $\vec{y}=\textbf{C}\circ\Phi(\vec{d})$, if the number of sensors and basis are equal, we can easily reconstruct the approximate solution by $u\approx\Phi\circ(\textbf{C}\circ\Phi)^{-1}(\vec{y})$. Otherwise, we can still reconstruct the approximate solution by regarding $(\textbf{C}\circ\Phi)^{-1}$ as the Moore-Penrose pseudo-inverse. Specifically, the optimal sensors, or equivalently the optimal sensing operator $\textbf{C}$, are obtained by using pivoting QR desomposition (see \cite{Kutz2017Sensor} for more details).
 
In Figure \ref{fig:Example4localerrors} and Figure \ref{fig:Example4globalerrors}, we show the results of the local problem and global problem, respectively. In these numerical results, we 
compared the error between the reconstructed solutions and the reference solution. We find the our proposed method works well for problem \eqref{randommultiscaleelliptic} with a non-parametric coefficient or source as well.

\begin{figure}[htbp]
	\centering
	\begin{subfigure}[b]{0.45\textwidth}
		\includegraphics[width=1.0\linewidth]{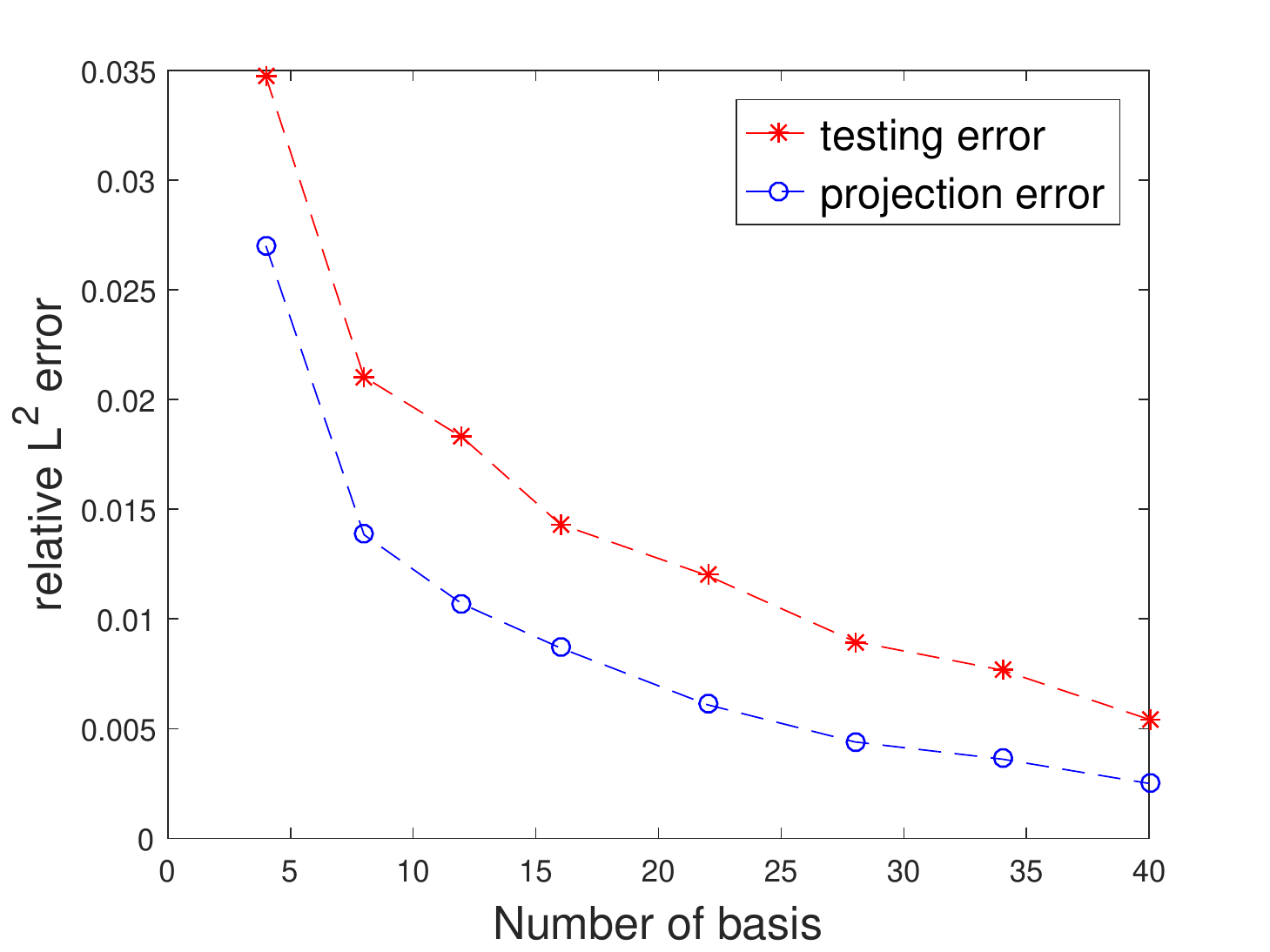}
%		\caption{ Training and testing loss during training procedure.}
		\label{fig:Example4errors-a}
	\end{subfigure}
	\begin{subfigure}[b]{0.45\textwidth}
		\includegraphics[width=1.0\linewidth]{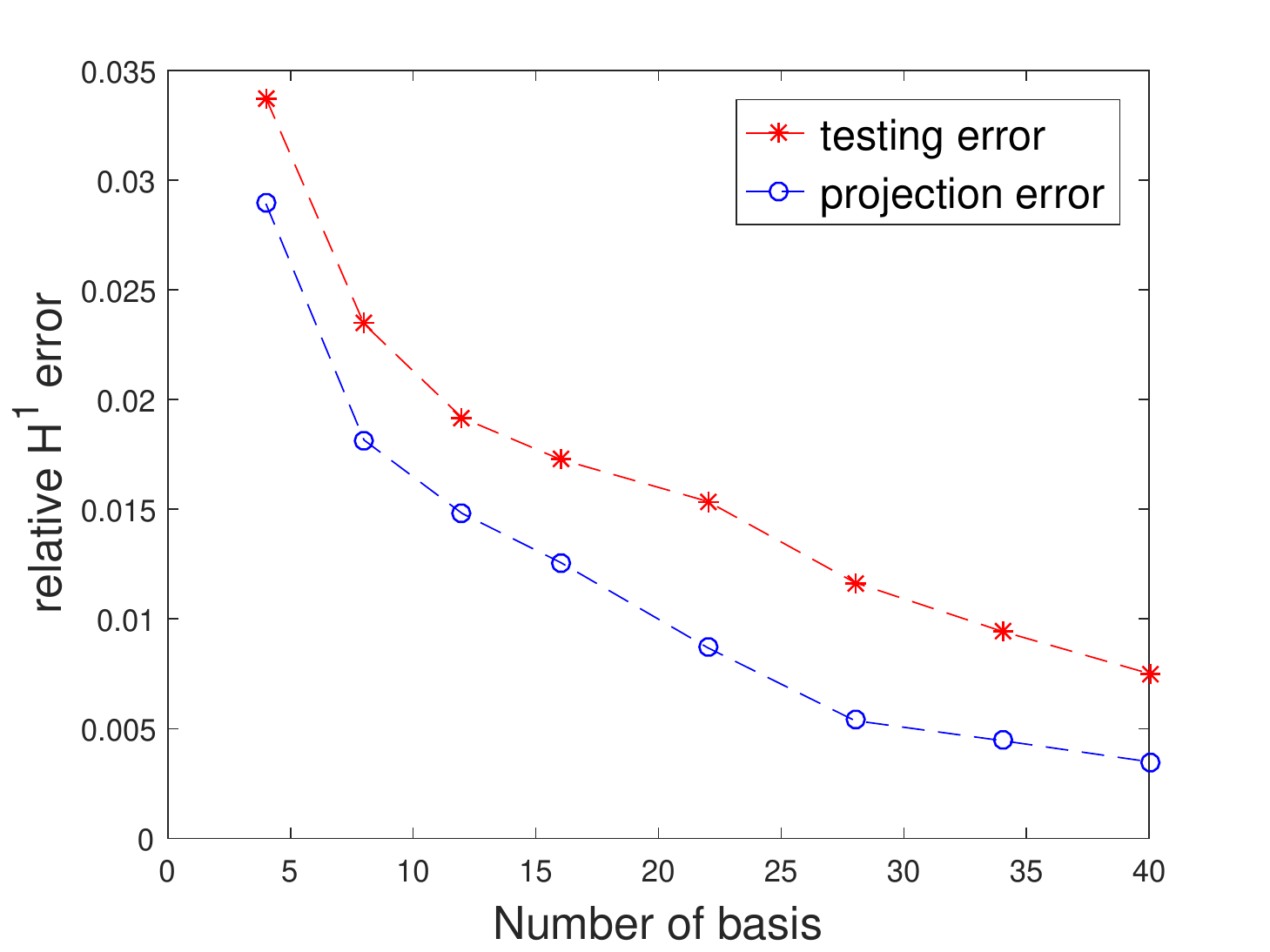}
%		\caption{ Relative $L^2$ error of testing set during training procedure.} 
		\label{fig:Example4errors-b}
	\end{subfigure}
	\caption{ The relative errors with increasing number of basis in the local problem of Sec.\ref{sec:Example4} .}
	\label{fig:Example4localerrors}
\end{figure}

\begin{figure}[htbp]
	\centering
	\begin{subfigure}[b]{0.45\textwidth}
		\includegraphics[width=1.0\linewidth]{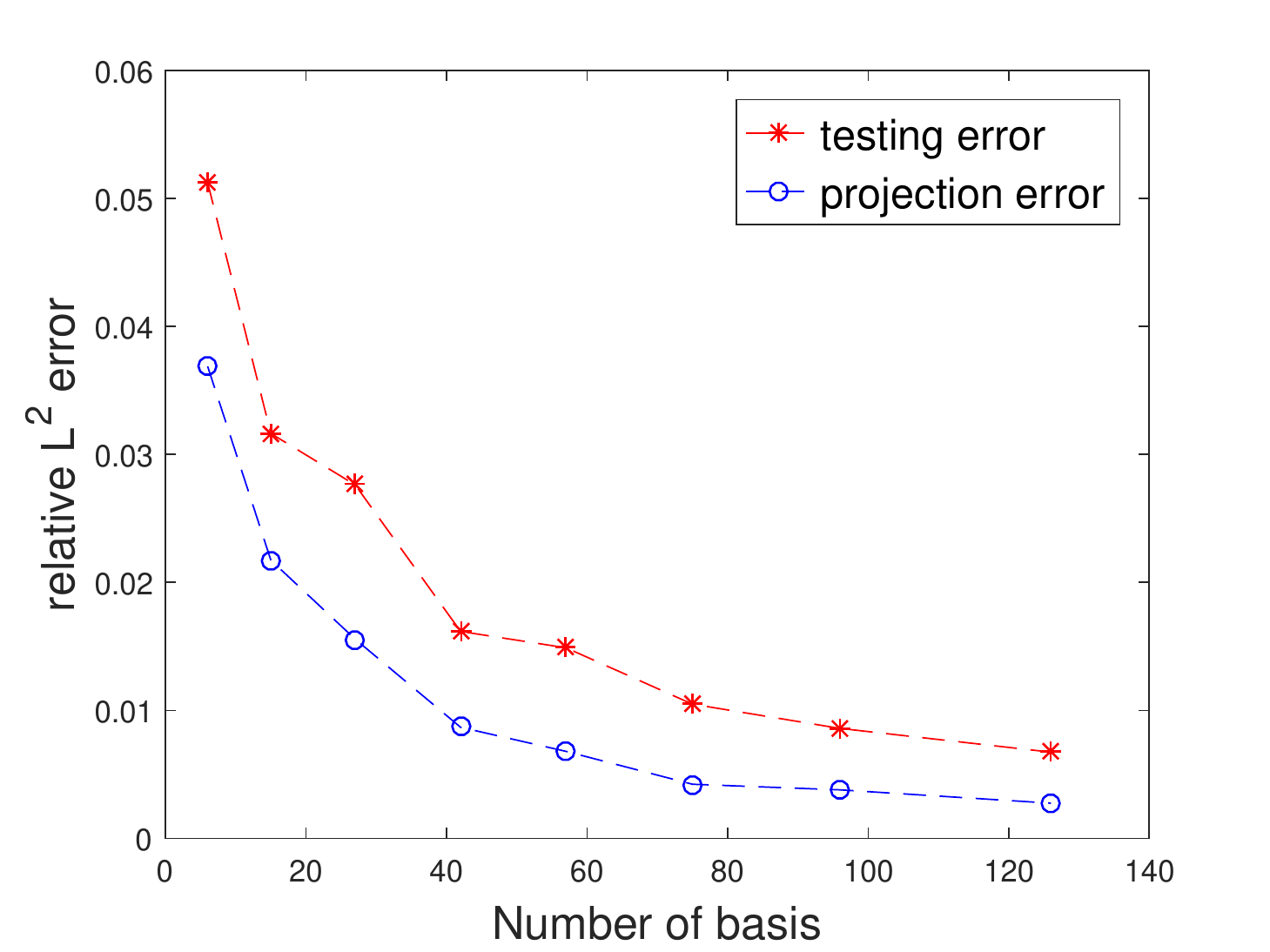}
	\end{subfigure}
	\begin{subfigure}[b]{0.45\textwidth}
		\includegraphics[width=1.0\linewidth]{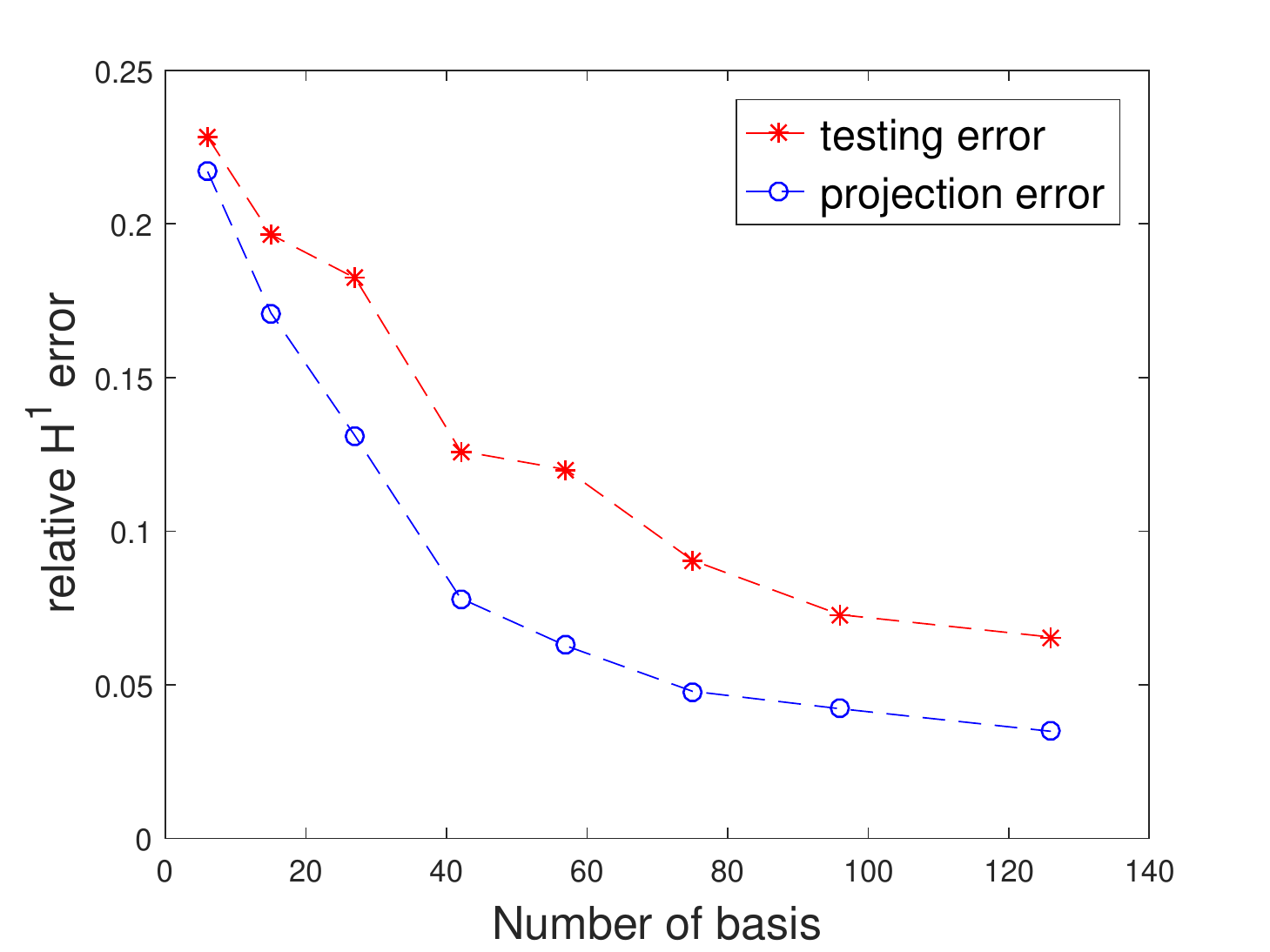}
	\end{subfigure}
	\caption{ The relative errors with increasing number of basis in the global problem of Sec.\ref{sec:Example4}.}
	\label{fig:Example4globalerrors}
\end{figure}
 
\section{Conclusion} \label{sec:Conclusion}
\noindent 
In this paper, we propose a data-driven approach to solve elliptic PDEs with multiscale and random coefficient which arise in various applications, such as heterogeneous porous media flow problems in water aquifer and oil reservoir simulations.  The key idea for our method, which is motivated by the high separable approximation of the underlying Green's function, is to extract a problem specific low dimensional structure in the solution space and construct its basis from the data. Once the data-driven basis is available, depending on different setups, we design several ways to compute a new solution efficiently.

Error analysis based on sampling error of the coefficients and the projection error of the data-driven basis is presented to provide some guidance in the implementation of our method. Numerical examples show that the proposed method is very efficient especially when the problem has relative high dimensional random input.

%There are two directions we want to explore in our future work. Firstly, we intend to 
%develop efficient numerical methods to solve multiscale and random elliptic PDEs, where the coefficient has
%special structures, such as heterogeneity, channels and time-dependence features. These types of problems are ubiquitous in reservoir simulation.  Secondly, we will adopt the numerical methods developed in this paper to
%compute Helmholtz equation in random media.

% (some drawback and future work)
% However, it also has disadvantages. In the gPC method,
%certain smoothness or regularity of the random solution with respect
%to every random dimension is required in order to make sure the
%exponent decay of expansion term. If the random solution is
%oscillatory or discontinuous along some random dimensions, the gPC
%method deteriorates dramatically, i.e., one need many terms to
%represent the random solution.

%\appendix
%\section{?????} \label{sec:appendix_KLEingPC}

\section*{Acknowledgements}
\noindent
The research of S. Li is partially supported by the Doris Chen Postgraduate Scholarship. 
The research of Z. Zhang is supported by the Hong Kong RGC General Research Funds (Projects 27300616, 17300817, and 17300318), National Natural Science Foundation of China (Project 11601457), Seed Funding Programme for Basic Research (HKU), and Basic Research Programme (JCYJ20180307151603959) of The Science, Technology and Innovation Commission of Shenzhen Municipality. The research of H. Zhao is partially supported by NSF grant DMS-1622490 and DMS-1821010. This research is made possible by a donation to the Big Data Project Fund, HKU, from Dr Patrick Poon whose generosity is gratefully acknowledged.

%This work was supported in part by AFOSR MURI Grant FA9550-09-1-0613, a DOE Grant DE-FG02-06ER25727, and an NSF Grants DMS-0908546, DMS-1159138.

%\appendix
%\section{Analysis} \label{sec:appendix_Analysis}

%\bibliographystyle{plain}
\bibliographystyle{siam}
\bibliography{ZWpaper}

%\section*{References}
%\begin{thebibliography}{10}
%\end{thebibliography}

\end{document}